\documentclass{amsart}
\usepackage{verbatim,amssymb,amsmath,amscd,latexsym,amsbsy,mathrsfs} 
\usepackage{graphicx}
\usepackage[all]{xy}
 \input{xy}
 \xyoption{all}

\newtheorem{thm}{Theorem}[section]
\newtheorem{cor}[thm]{Corollary}

\newtheorem{prop}[thm]{Proposition}

\newtheorem{lemma}[thm]{Lemma}
\newtheorem{rmk}[thm]{Remark}

\newtheorem{con}[thm]{Conjecture}
\newtheorem{ex}[thm]{Example}

\DeclareMathOperator*{\im}{im} 
\DeclareMathOperator*{\coker}{coker} 
 
\DeclareMathOperator*{\Spec}{Spec}

\newcommand{\PP}{\mathbb{P}}

\newcommand{\cC}{\mathcal{C}}

\newcommand{\cX}{\mathcal{X}}

\newcommand{\D}{\mathcal{D}}
\newcommand{\cL}{\mathcal{L}}

\newcommand {\C} {{\mathbb C}}

\newcommand {\Z} {{\mathbb Z}}
\newcommand {\Q} {{\mathbb Q}}

\newcommand {\F} {{\mathcal F}}

\newcommand {\E} {{\mathcal E}}
\newcommand {\dt} {{\bullet}}

\newcommand {\OO} {{\mathcal O}}
\newcommand {\A} {\mathbb{A}}
\newcommand {\I} {\mathcal{I}}

 \begin{document}
\title{A Lefschetz $(1,1)$ theorem for singular varieties}
\author{
        Donu Arapura    
}
 \thanks {Partially supported by the NSF }
\address{Department of Mathematics\\
 Purdue University\\
 West Lafayette, IN 47907\\
U.S.A.}
 \maketitle

\tableofcontents

The usual Lefschetz $(1,1)$ theorem says that given a smooth complex
projective variety $X$, an element of $H^2(X,\Z)$ is the  class of a divisor if and only
it lies $H^{11}$ or equivalently in $F^1H^2(X,\C)$. When
$X$ is singular, $H^2(X,\C)$ still carries the Hodge filtration associated
to the canonical  mixed Hodge structure. One of our main results is that
an element of $H^2(X, \Z)$ lies in $F^1$ if and only if it comes from motivic
cohomology $H^2_M(X,\Z(1))$. Along the way, we give a reasonably
concrete description of the last group. As in Deligne's original
construction of mixed Hodge structures, one starts by building a
suitable simplicial resolution 
$$
\xymatrix{
 \ldots & \tilde X_2\ar[r]\ar@<1ex>[r] \ar@<-1ex>[r]& \tilde X_1\ar[r]_{p_1}\ar@<1ex>[r]^{p_0} & \tilde X_0\ar[r]^{\pi} & X
}
$$
Very loosely, $\tilde X_\dt$ is a diagram of smooth varieties with the
same cohomology as $X$. An element of $H^2_M(X,\Z(1))$ 
is represented by  a pair $(D,f)$, where $D$ is a divisor on $\tilde
X_0$ and $f$ a rational function on $\tilde X_1$,  such that $\partial
D:=p_0^*D-p_1^*D$ is defined and equal to  $(f)$ and $\partial f=1$. 
An example of such a pair is $(\pi^*C, 1)$
where $C$ is a Cartier divisor on $X$. So in this sense, the elements
of $H^2_M(X,\Z(1))$ can be viewed as generalized Cartier divisors on
$X$. It is worth noting that Barbieri-Viale and Srinivas \cite{bs} have
constructed a normal projective  surface where not every element of 
$H^2(X,\Z)\cap F^1$ can be represented by a Cartier divisor, so  generalized divisors
are really needed here. 

In addition to the Lefschetz theorem,  one of our goals is to
give a conjectural description of weight $2p$ Hodge cycles on
$H^{2p}(X,\Q)$, or equivalently elements of $H^{2p}(X,\Q)\cap F^p$, for
all degrees $p$.
As  a first step, we will try to  understand what happens on the maximal  pure quotient
$\tilde H^{2p}(X):=H^{2p}(X)/W_{2p-1}$. We define a class in $\tilde
H^{2p}(X)$ to be homologically Cartier if it is represented by an
algebraic cycle on some resolution. (Since this notion seems more
 broadly useful, we modify this  definition to work  in arbitrary
 characteristic in the first section.)
 Basic examples of homological
Cartier cycles are provided by Chern classes of vector bundles, Weil
divisors on normal surfaces and more generally numerically Cartier $\Q$-divisors in the sense
 of Boucksom, de Fernex, Favre  and Urbanati \cite{bff}.
The  Hodge conjecture would imply that any Hodge cycle on $\tilde
H^{2p}(X)$ is given by a homologically Cartier cycle. This however only
gives a partial solution to the original problem, since  there
is in general a nontrivial obstruction $\varepsilon(\alpha)$ for a
homologically Cartier cycle $\alpha$ to lift to
a Hodge cycle on $H^{2p}(X)$. The search for a natural source of 
unobstructed classes led the author first to operational Chow groups
and then to  motivic cohomology. By work of Kimura \cite{kimura}, the
operational Chow group $CH_{OP}^p(X)$ can be identified with the
kernel $\ker \partial:CH^p(\tilde
X_0)\to CH^p(\tilde X_1)$, for suitable $\tilde X_\dt\to X$. It follows
that any element $\alpha\in CH^p_{OP}(X)_\Q$   will give a
homologically Cartier cycle, but $\varepsilon(\alpha)$ might still be nonzero. What is required
is a further constraint on the cycle $\partial \alpha$, and this is where
motivic cohomology enters the picture.

 For our purposes, the most congenial
approach to motivic cohomology is due to  Hanamura
\cite{hanamura}. He defines it as  the cohomology of a double complex built from
Bloch's cycle complex and  a  simplicial resolution. 
Hanamura shows that, with $\Q$-coefficients, the result is well
defined and functorial.
However, we really need this with $\Z$-coefficients. We handle this by
showing that the  group defined using Hanamura's approach
 coincides with  the more intrinsic definition given by Friedlander,
Suslin and Voevodsky in their book (specifically \cite{fv}) as the cohomology of a complex of
sheaves on the cdh site. Although these results are  probably known to
some, we include proofs
in sections 4 and 5 for lack of a suitable reference.
 Returning to the previous discussion, we have a map from motivic
 cohomology $H^{2p}_M(X,\Q(p))\to CH^p_{OP}(X)_\Q$, if 
$\alpha\in CH^p_{OP}(X)_\Q$ lifts we show that
$\varepsilon(\alpha)=0$, so in particular it determines a  weight $2p$ Hodge cycle on $H^{2p}(X,\Q)$. The
proof uses
explicit formulas for higher cycle classes due to Kerr, Lewis and
M\"uller-Stach \cite{klm}. 
This result leads naturally
to a refined Hodge conjecture (conjecture~\ref{con:hodge}) that if $X$ is defined over $\overline{\Q}$,
then any weight $2p$  Hodge cycle on $H^{2p}(X,\Q)$ comes from motivic
cohomology.  Unlike the usual Hodge conjecture, the statement
is easy to  falsify  in general  for  varieties not 
defined over $\overline{\Q}$. This is closely related to the fact that  kernels of
Abel-Jacobi maps on Chow groups of  transcendental varieties can be
very large. By contrast, according to a conjecture of Bloch and
Beilinson, this sort of phenomenon should not occur for varieties over $\overline{\Q}$.
Regarding evidence for conjecture~\ref{con:hodge},  we note that
it  holds for $p=1$  by the   Lefschetz theorem stated above and proved in
 section 7. As a consequence it also holds for products of degree 2
 Hodge cycles.
In the last section, we prove that the conjecture holds for the $n$-fold self fibre
product of an elliptic modular surface. The result is
deduced by showing that the algebra of  Hodge cycles on these
varieties are generated by
degree 2 Hodge cycles following a careful analysis of the Leray
spectral sequence.

The word ``variety'' will mean a reduced scheme of finite type over the
ground field, which, with 
the exception of the first section, is always $\C$. We write $H^*(X)$
(respectively $H^*(X,\Z)$)
for singular cohomology of the associated analytic space with
coefficients  in $\Q$ (respectively $\Z$) in all but the first section.

Comments by V. Srinivas, B. Totaro and A. Vistoli at an early stage of this project were
very helpful in steering me in the right direction. Parts of this paper were written during a
short but productive visit to the  Simons Center in Stony Brook.

\section{Homologically Cartier cycles}

In this section we work over an arbitrary algebraically closed field $k$,
but over $\C$ in the remaining sections. Let  $H^*(-)$ denote either
$\ell$-adic cohomology, with $\Q_\ell$ -coefficients, where
$\ell\not=char\, k$, or singular cohomology with $\Q$-coefficients
when $k=\C$. Let $H_*(-)$ denote either or ordinary or $\ell$-adic
Borel-Moore homology \cite{fulton, laumon}, again with $\Q$ or
$\Q_\ell$ coefficients. Every $p$-dimensional closed subvariety
$V\subset X$ possesses a fundamental class $[V]\in H_{2p}(X)$.
Let $C_p(X)\subset H_{2p}(X)$ denote the $\Q$-span of these classes.
This can be identified with the quotient of the Chow group $CH_p(X)$
tensored with $\Q$ by homological equivalence. 
At this point, we need to bring the weight filtration into play. We
start with some elementary definitions and properties.

\begin{lemma}\label{lemma:W}
  Let $\pi:\tilde X\to X$ be a nonsingular alteration of a projective
  variety $X$. The  subspaces
$$W_{p-1} H^p(X)= \ker[H^p(X)\to H^p(\tilde X)]$$
$$W_{-p}H_{p}(X) = \im [H_p(\tilde X)\to H_p( X)]$$
are independent of the choice of $\tilde X$.
\end{lemma}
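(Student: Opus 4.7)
The plan is to compare any two nonsingular alterations through a common dominating one. Given two nonsingular alterations $\pi_1:\tilde X_1\to X$ and $\pi_2:\tilde X_2\to X$, I would first form the fibre product $\tilde X_1\times_X \tilde X_2$, pick an irreducible component $Z$ that dominates $X$, and then apply de Jong's theorem to produce a nonsingular alteration $\rho:\tilde X_3\to Z$. Composing with the two projections gives proper surjective generically finite maps $q_i:\tilde X_3\to \tilde X_i$ over $X$, with all three varieties smooth projective of dimension $d=\dim X$. It then suffices to prove that each of the two subspaces, computed from $\tilde X_i$, agrees with the one computed from $\tilde X_3$.

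The key input is the standard degree/trace identity. For $f:Y\to Z$ a proper surjective generically finite morphism of degree $n$ between smooth projective varieties of the same dimension, one has $f_*\circ f^*=n\cdot\mathrm{id}$ on cohomology, and dually (via Poincar\'e duality) on Borel--Moore homology. With rational coefficients this forces the pullback $f^*:H^p(Z)\to H^p(Y)$ to be injective and the proper pushforward $f_*:H_p(Y)\to H_p(Z)$ to be surjective; the argument works uniformly for singular $\Q$-cohomology and for $\ell$-adic cohomology with $\ell\neq\mathrm{char}\,k$.

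Applying this to $q_1:\tilde X_3\to \tilde X_1$, the injectivity of $q_1^*$ together with the factorization $H^p(X)\to H^p(\tilde X_1)\to H^p(\tilde X_3)$ yields
$$\ker[H^p(X)\to H^p(\tilde X_1)]=\ker[H^p(X)\to H^p(\tilde X_3)].$$
The same identification via $\tilde X_2$ shows both kernels coincide. Dually, the surjectivity of $(q_1)_*$ applied to the factorization $H_p(\tilde X_3)\to H_p(\tilde X_1)\to H_p(X)$ gives
$$\im[H_p(\tilde X_1)\to H_p(X)]=\im[H_p(\tilde X_3)\to H_p(X)],$$
and likewise for $\tilde X_2$, so both images agree.

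There is no serious obstacle here: the existence of the dominating alteration is immediate from de Jong, and the only real content is the trace identity. The one point that must be emphasized is the essential use of $\Q$-coefficients --- with integral coefficients, $n$-torsion could obstruct both the injectivity of $f^*$ and the surjectivity of $f_*$, and the independence statement would genuinely fail.
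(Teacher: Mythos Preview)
Your proof is correct and follows essentially the same route as the paper: construct a common dominating nonsingular alteration via the fibre product and de Jong, then use the trace/Poincar\'e duality identity to get injectivity of pullback (hence equality of kernels) and surjectivity of pushforward (hence equality of images). The paper's proof is terser---it just says ``by Poincar\'e duality'' for injectivity and ``similar'' for the homology statement---but the underlying argument is identical.
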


\begin{proof}
  Given a second alteration $\pi':\tilde X'\to X$, after replacing it by 
  the component of an alteration of $\tilde X\times_X \tilde X'$
  dominating $X$, we can assume that $\tilde X'$ factors through
  a morphism $\tilde X'\to \tilde X$. By Poincar\'e duality,
  $H^p(\tilde X)\to H^p(\tilde X')$ is injective. Therefore $\ker
  \pi^*=\ker \pi'^*$. The second part is similar.
\end{proof} 

We can see easily that $\bigoplus W_{*-1}H^*(X)\subset H^*(X)$ is an
ideal. Set
$$\tilde H^*(X) = H^*(X)/\bigoplus W_{*-1}H^*(X) \cong \im [H^*(X)\to
H^*(\tilde X)]
$$
to the quotient ring. Also let
$$\tilde H_j(X)= W_{-j}H_j(X)$$

\begin{lemma}
  If $\alpha\in W_{i-1}H^i(X)$ and $\beta\in \tilde H_j(X)$, then $\alpha\cap\beta=0$.
\end{lemma}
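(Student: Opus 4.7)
The plan is to reduce the statement to a single application of the projection formula, using Lemma 1.1 to harmonize the alterations that appear in the two definitions.

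First, I would invoke Lemma 1.1 to choose one nonsingular alteration $\pi:\tilde X\to X$ that simultaneously computes both $W_{i-1}H^i(X)$ and $W_{-j}H_j(X)$. This is legitimate because the lemma guarantees independence of the alteration; given any two candidates $\tilde X,\tilde X'$, we may pass to a common alteration dominating both, and the subspaces in question do not change. With a fixed $\pi$, the hypotheses read: $\pi^*\alpha = 0$ in $H^i(\tilde X)$, and $\beta = \pi_*\tilde\beta$ for some $\tilde\beta\in H_j(\tilde X)$.

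Next, I would apply the projection formula for the proper morphism $\pi$,
\[
\alpha\cap\pi_*\tilde\beta \;=\; \pi_*\!\left(\pi^*\alpha\cap\tilde\beta\right),
\]
which is available both in the $\ell$-adic and classical settings used in Section~1 (see e.g.\ the references to Fulton and Laumon--Moret-Bailly). Substituting $\pi^*\alpha=0$ on the right gives $\alpha\cap\beta=0$.

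The only step that requires any care is the reduction to a common alteration, since \textit{a priori} the alteration witnessing $\alpha\in\ker\pi^*$ need not be the one witnessing $\beta\in\im\pi_*$. This is handled exactly as in the proof of Lemma~\ref{lemma:W}: take an alteration of an irreducible component of $\tilde X\times_X\tilde X'$ that dominates $X$, use the injectivity of pullback on cohomology (Poincar\'e duality) and the corresponding surjectivity for Borel--Moore homology to transfer both witnesses to the common refinement. Once this is in place, the projection formula finishes the argument; I do not anticipate any further obstacle.
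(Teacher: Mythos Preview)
Your proof is correct and follows exactly the same route as the paper: lift $\beta$ to $\tilde\beta$ on a nonsingular alteration, then apply the projection formula $\alpha\cap\beta=\pi_*(\pi^*\alpha\cap\tilde\beta)=0$. The paper's proof is more terse---it simply writes ``with $\tilde X$ as above'' and leaves implicit the point you spell out about using a common alteration---but the argument is identical.
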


\begin{proof}
  Choose $\tilde \beta\in H_j(\tilde X)$, with $\tilde X$ as above, so
  that $\pi_*\tilde \beta=\beta$. Then 
$$\alpha\cap \beta =\pi_*(\pi^*\alpha\cap \tilde \beta) =0$$
\end{proof}

It follows that the cap product  descends to  a well defined pairing
$$\tilde H^i(X)\otimes \tilde H_j(X)\to \tilde H_{j-i}(X)$$
that we will also refer to as a cap product.

\begin{lemma}
  The image of the cycle map $CH_p(X)\to H_{2p}(X)$ lies in $\tilde H_{2p}(X)$
\end{lemma}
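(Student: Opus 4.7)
The plan is to reduce to showing that for each irreducible closed $p$-dimensional subvariety $V\subset X$ the fundamental class $[V]\in H_{2p}(X)$ lies in $\tilde H_{2p}(X)$; since the cycle map $CH_p(X)\to H_{2p}(X)$ is the $\Q$-linear extension of $V\mapsto [V]$, this reduction suffices. By Lemma~\ref{lemma:W}, the subspace $\tilde H_{2p}(X)=W_{-2p}H_{2p}(X)=\im[H_{2p}(\tilde X)\to H_{2p}(X)]$ does not depend on the choice of nonsingular alteration, so I am free to pick one adapted to $V$.

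The first step is to apply de Jong's theorem on alterations of pairs (or Hironaka's embedded resolution of singularities when $\mathrm{char}\,k=0$) to the pair $(X,V)$. This produces a nonsingular alteration $\pi:\tilde X\to X$ together with a smooth closed subvariety $\tilde V\subset\tilde X$ of dimension $p$ such that the restriction $\pi|_{\tilde V}:\tilde V\to V$ is proper, surjective, and generically finite of some degree $d>0$.

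Given such $\tilde V$, the standard pushforward formula for fundamental classes under a proper generically finite morphism yields $\pi_*[\tilde V]=d\cdot [V]$ in $H_{2p}(X)$. Since the coefficients are rational, this gives
$$[V]=\pi_*\bigl(\tfrac{1}{d}[\tilde V]\bigr)\in\im[H_{2p}(\tilde X)\to H_{2p}(X)]=\tilde H_{2p}(X),$$
which is what is needed.

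The main obstacle is the first step: producing an alteration of $X$ that actually contains a closed subvariety of the correct dimension dominating $V$. If one picks an arbitrary nonsingular alteration $\pi:\tilde X\to X$, then $V$ may lie in the exceptional locus, so that each irreducible component of $\pi^{-1}(V)$ has dimension strictly larger than $p$ and the pushforward of its fundamental class contributes nothing to $[V]$. De Jong's theorem for pairs is precisely what lets one arrange the alteration so that $V$ is covered by a smooth subvariety of $\tilde X$ of the expected dimension; once this is in place the remainder is essentially the classical compatibility of cycle classes with proper pushforward.
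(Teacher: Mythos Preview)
Your argument is correct and follows the same basic strategy as the paper's proof: lift the cycle to a nonsingular alteration and push forward. The paper's proof, however, is a single sentence: given $\beta\in CH_p(X)$, there exists $\tilde\beta\in CH_p(\tilde X)$ with $\pi_*\tilde\beta=\beta$ (the paper later cites \cite[prop.\ 1.3]{kimura} for this standard surjectivity). This works for \emph{any} fixed proper surjective $\pi$, so the issue you flag as the ``main obstacle'' is not one. Even if every irreducible component of $\pi^{-1}(V)$ has dimension $>p$, one simply takes a closed point $\eta$ in the fibre of $\pi$ over the generic point of $V$ and sets $W=\overline{\{\eta\}}\subset\tilde X$; then $W$ is irreducible of dimension exactly $p$, the induced map $W\to V$ is generically finite of some degree $d>0$, and $\pi_*[W]=d[V]$. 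No smoothness of $W$ is needed, since the fundamental class $[W]\in H_{2p}(\tilde X)$ exists regardless, and no special choice of alteration adapted to $V$ is required. Your appeal to ``de Jong's theorem for pairs'' is in any case not quite the right citation: the standard statement arranges for $\pi^{-1}(V)$ to be a strict normal crossings divisor, which has the \emph{wrong} dimension when $\operatorname{codim} V>1$.
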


\begin{proof}
  Given an algebraic cycle $\beta\in CH_p(X)$, we can find an algebraic
  cycle $\tilde \beta\in CH_*(\tilde X)$ such that $\pi_*\tilde \beta=\beta$.
\end{proof}

We come to the key definition. If $X$ is a projective (possibly
reducible) variety, an element $\alpha\in \tilde H^{2p}(X)$ can be regarded
as an element of $H^{2p}(\tilde X)$, for any alteration, under the
inclusion  $\tilde H^{2p}(X)\subset H^{2p}(\tilde X)$. We say that
$\alpha$ is
{\em homologically Cartier} if  it is represented by an algebraic
cycle on some nonsingular alteration $\tilde X$. 
 Let $C^p(X)$ denote the space
homologically Cartier cycles on $X$. When $X$ is nonsingular and
 $\pi^*\alpha$ is algebraic then so is $\alpha=\pi_*\pi^*\alpha$. Thus
 we see that homologically Cartier cycles are just algebraic cycles in
 this case. For similar reasons, we can see that if $X$ is irreducible
 of dimension $n$, then
$$C^p(X)= \im H^{2p}(X)\cap C_{\dim X-n}(\tilde X)$$
for a fixed   nonsingular alteration $\tilde X\to X$

\begin{prop}
\-
  \begin{enumerate}
  \item $C^p(-)$ is functorial in the sense that if $f:X\to Y$ is morphism, then
    $f^*C^p(Y)\subset C^p(X)$.
\item $C^*(X)\subset \tilde H^{2*}(X)$ is a subring.
\item If  $\alpha\in C^p(X)$ and $\beta\in C_{q}(X)$, then $\alpha\cap \beta\in C_{q-p}(X)$
  \end{enumerate}
\end{prop}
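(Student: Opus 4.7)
The unifying theme for all three parts is that any algebraic cycle we see is represented on \emph{some} nonsingular alteration, and the same trick that underlies Lemma~\ref{lemma:W} lets us replace any two alterations by a common dominating one. I would fix this tool at the start: given alterations $\pi_i:\tilde X_i\to X$, set $\tilde X$ to be an alteration of a component of $\tilde X_1\times_X\tilde X_2$ that dominates $X$, which gives morphisms $\tilde X\to\tilde X_i$ compatible with the $\pi_i$ and whose induced maps on cohomology are injective in the appropriate range. Under such injections an algebraic cycle pulls back to an algebraic cycle, so any homologically Cartier class on $X$ has an algebraic representative on \emph{any} sufficiently fine alteration.

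For part (1), pick $\alpha\in C^p(Y)$ with algebraic representative $\tilde\alpha\in CH^p(\tilde Y)$ for some nonsingular alteration $\tilde Y\to Y$. Then take any nonsingular alteration $\tilde X\to X$ that factors through a morphism $\tilde f:\tilde X\to \tilde Y$ covering $f$ (obtained by altering a suitable component of $X\times_Y\tilde Y$). Since pullback preserves $W_{*-1}$, the class $f^*\alpha$ is a well-defined element of $\tilde H^{2p}(X)$, and viewing it in $H^{2p}(\tilde X)$ it coincides with $\tilde f^*\tilde\alpha$, which is algebraic.

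For part (2), given $\alpha\in C^p(X)$ and $\beta\in C^q(X)$, choose a single nonsingular alteration $\pi:\tilde X\to X$ fine enough to carry algebraic representatives $\tilde\alpha$ and $\tilde\beta$ of both classes (using the common-alteration trick). The cup product on $H^*(X)$ descends to $\tilde H^*(X)$ because $\bigoplus W_{*-1}H^*(X)$ is an ideal, and the image of $\alpha\cup\beta$ in $H^{2(p+q)}(\tilde X)$ equals $\tilde\alpha\cup\tilde\beta$, which by Poincar\'e duality on the smooth projective variety $\tilde X$ is represented by the intersection cycle $\tilde\alpha\cdot\tilde\beta\in CH^{p+q}(\tilde X)$. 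Hence $\alpha\cup\beta\in C^{p+q}(X)$.

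For part (3), apply the same common-alteration construction so that $\alpha$ is represented by an algebraic cycle $\tilde\alpha\in CH^p(\tilde X)$ and $\beta=\pi_*\tilde\beta$ for some $\tilde\beta\in CH_q(\tilde X)$ (the lift exists by the third lemma in the excerpt applied to $\tilde X\to X$, or directly by the definition of $\tilde H_{2q}$). The projection formula, as used in the proof of the cap-product lemma above, gives
$$\alpha\cap\beta=\pi_*(\pi^*\alpha\cap\tilde\beta)=\pi_*(\tilde\alpha\cap\tilde\beta),$$
and $\tilde\alpha\cap\tilde\beta$ is algebraic on $\tilde X$, so its pushforward lies in the $\Q$-span of fundamental classes of subvarieties of $X$, i.e.\ in $C_{q-p}(X)$. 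The only real obstacle anywhere in the argument is the bookkeeping of common alterations, and this is dispatched exactly as in Lemma~\ref{lemma:W}.
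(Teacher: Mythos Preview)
Your proof is correct and follows essentially the same approach as the paper. The only stylistic difference is in part~(2): the paper observes more concisely that $C^*(X)=\im H^{2*}(X)\cap C^*(\tilde X)$ inside $H^{2*}(\tilde X)$ for a fixed alteration, hence is an intersection of two subrings, whereas you reprove this by the common-alteration trick; both arguments are equivalent.
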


\begin{proof}
  The first property is clear, because we can find a commutative
  diagram
$$
\xymatrix{
 \tilde X\ar[r]\ar[d] & \tilde Y\ar[d] \\ 
 X\ar[r] & Y
}
$$
where the vertical maps are nonsingular alterations.
For (2), it is enough to observe that $C^*(X)$ is an intersection of
two subrings of $H^{2*}(\tilde X)$ namely $\im H^{2*}(X) \cap
C^*(\tilde X)$. To prove (3), choose $\tilde \alpha\in
CH^p(\tilde X)=CH_{\dim X-p}(\tilde X)$ with $[\tilde \alpha]=\alpha$ and
$\tilde \beta\in
CH_{q}(\tilde X)$ with $\pi_*[\tilde \beta] =\beta$ (the existence of
 $\tilde \beta$ is easy c.f. \cite[prop 1.3]{kimura}). Then we have
$\alpha\cap \beta= [\pi_*(\tilde \alpha\cdot \tilde \beta)]$.
\end{proof}

\begin{prop}\label{prop:HX}
  Given a projective variety $X$, choose a nonsingular alteration
  $\pi:\tilde X\to X$ and a nonsingular alteration $\tilde X_1\to \tilde
X_0\times_{X}\tilde X_0$ with projections $p_i:\tilde X_1\to \tilde
X_0$. Then
$$ H^i(X)\to  H^i(\tilde X)\stackrel{\partial}{\to} H^i(\tilde X_1)$$
is exact, where $\partial=p_1^*-p_2^*$.
\end{prop}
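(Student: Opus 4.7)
The plan is to extend the given data to a smooth proper simplicial resolution of $X$ and then combine cohomological descent with a weight argument. First I would iterate the hypothesis: choose nonsingular alterations of $\tilde X_0\times_X\tilde X_0\times_X\tilde X_0$, and so on at every simplicial level, to produce a smooth proper augmented simplicial object $\tilde X_\dt\to X$ with $\tilde X_0$ and $\tilde X_1$ as prescribed and with each $\tilde X_n$ smooth projective. Over $\C$ this is Deligne's construction from Hodge III; over an arbitrary algebraically closed field it is available via de Jong's theorem on alterations.

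Cohomological descent then supplies a convergent spectral sequence
$$E_1^{p,q}=H^q(\tilde X_p)\Longrightarrow H^{p+q}(X),$$
whose $d_1$ differential is the alternating sum of pullbacks along face maps; in particular $d_1\colon E_1^{0,i}\to E_1^{1,i}$ coincides with $\partial=p_0^*-p_1^*$. The induced filtration on $H^i(X)$ has $E_\infty^{0,i}$ as its top graded piece, and the edge map $H^i(X)\to H^i(\tilde X_0)$ factors as a surjection $H^i(X)\twoheadrightarrow E_\infty^{0,i}$ followed by the inclusion $E_\infty^{0,i}\hookrightarrow E_1^{0,i}$. So the image of $H^i(X)\to H^i(\tilde X_0)$ equals $E_\infty^{0,i}$, and it suffices to identify the latter with $\ker(\partial)=E_2^{0,i}$.

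To collapse the spectral sequence along the top row, I would use purity. Because every $\tilde X_n$ is smooth and projective, $H^q(\tilde X_n)$ is pure of weight $q$, either as a mixed Hodge structure when $k=\C$ or, after spreading out, as a Galois representation by Weil~II in positive characteristic. This purity passes to subquotients, so $E_r^{p,q}$ remains pure of weight $q$ for all $r\ge 1$. For $r\ge 2$ the differential $d_r\colon E_r^{0,i}\to E_r^{r,i-r+1}$ is then a morphism between pure objects of distinct weights $i$ and $i-r+1$, and such morphisms are zero. Consequently $E_\infty^{0,i}=E_2^{0,i}=\ker(\partial)$, which combined with the previous paragraph yields the desired exactness.

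The main obstacle is the descent step: invoking a hypercover spectral sequence for alterations with rational coefficients requires nontrivial input (Deligne in the Hodge setting, de Jong together with a cdh- or proper-descent statement for $\Q_\ell$-cohomology in the $\ell$-adic setting). Once that is granted, the remainder is a formal weight argument and poses no further difficulty.
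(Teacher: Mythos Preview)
Your proposal is correct and follows essentially the same line as the paper: build a smooth proper augmented simplicial scheme $\tilde X_\dt\to X$ via alterations, invoke the descent spectral sequence $E_1^{pq}=H^q(\tilde X_p)\Rightarrow H^{p+q}(X)$, and use purity of weights to force $E_2^{0,i}=E_\infty^{0,i}$. The only notable difference is that the paper handles characteristic zero by reducing to $\C$ and citing \cite[prop 8.2.5]{deligneH} directly, and in positive characteristic spells out the spreading-out step and the Frobenius-eigenvalue argument more explicitly, whereas you give the weight argument uniformly; both routes reach the same conclusion.
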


\begin{proof}
Suppose  that $char\, k=0$. Since the \'etale cohomology of $X$ is invariant under base extension
to a larger algebraically closed field, there is no loss assuming that $k=\C$. By 
the comparison theorem, we can also assume that 
  $H^*(X)$ is singular cohomology. Now the proposition follows from
  \cite[prop 8.2.5]{deligneH}.

When $char\, k= r>0$, we  also use a weight argument, but we will need to
work out things from scratch. First of all, we can reduce to
  the case where $k$ is the algebraic closure of a field $k_0$ which is
  finitely generated over the finite field $\mathbb{F}_{r}$. We can assume that
  $X$ is defined by the base change of a variety defined over $k_0$. Using \cite{dJ}, 
we build a smooth simplicial scheme $\tilde X_\dt\to X$ augmented over
$X$ as follows. Let $\tilde X_0= \tilde X$ and $\tilde X_1$ as
above. Choose the higher  $\tilde X_n$  inductively so that the canonical map
$$\tilde X_n\to cosk(sk_{n-1} \tilde X_\dt)_{n}$$
is proper and surjective; see \cite[\S 6]{deligneH} or \cite{s}.
This will ensure that $\tilde X_\dt\to X$ will satisfy cohomological
descent, and in particular that we have a descent spectral sequence
$$E_1^{pq}= H^q(\tilde X_p) \Rightarrow H^{p+q}(X)$$
We can assume that for any fixed constant $N$, all $\tilde X_p$ for $p\le N$, and maps between them,
are defined over  $k_0$, after possibly
enlarging it. This will ensure that  $G=Gal(k/k_0)$ will act
on the spectral sequence in the range $p\le N$. In particular, that
the differentials are equivariant in this range.
Choose $\phi\in G$ which maps to a Frobenius in $Gal(\overline{\mathbb{F}_{r}}/\mathbb{F}_{r^s})$.
The Weil conjectures \cite{deligneW} will show that the eigenvalues of
$\phi$ on $E_1^{pq}$ and $E_1^{p'q'}$ are different whenever
$q\not= q'$. Since $N$ can be chosen arbitrarily large,
this forces degeneration of the spectral
sequence at $E_2$. In particular,  $E_2^{0i}=E_\infty^{0i}$. This
implies exactness of
$$H^i(X)\to H^i(\tilde X_0)\to H^i(\tilde X_1)$$
\end{proof}

\begin{cor}\label{cor:Cseq}
  We have an exact sequence
$$0\to C^p(X)\to C^p(\tilde X_0)\to C^p(\tilde X_1)$$
\end{cor}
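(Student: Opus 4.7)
The plan is to deduce the corollary from Proposition~\ref{prop:HX} applied in degree $i=2p$, together with the definition of $C^p(-)$ as the intersection of $\tilde H^{2p}(X)$ with the algebraic cycles on a fixed alteration. Concretely, by the discussion right before Proposition~\ref{prop:HX}, we may identify
$$C^p(X)\ =\ \tilde H^{2p}(X)\cap C^p(\tilde X_0)\ \subset\ H^{2p}(\tilde X_0),$$
so the corollary becomes a statement about subspaces of $H^{2p}(\tilde X_0)$ cut out by the algebraicity condition.

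First I would check injectivity of $C^p(X)\to C^p(\tilde X_0)$: this is immediate from the identification above, since the map is induced by the inclusion $\tilde H^{2p}(X)\hookrightarrow H^{2p}(\tilde X_0)$. Next I would verify that the composition vanishes: for $\alpha\in C^p(X)\subset H^{2p}(X)$, the two compositions $p_0\circ(\tilde X_1\to \tilde X_0\times_X\tilde X_0)$ and $p_1\circ(\tilde X_1\to \tilde X_0\times_X\tilde X_0)$ into $X$ agree, so $p_0^*\pi^*\alpha=p_1^*\pi^*\alpha$ and hence $\partial\alpha=0$.

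For exactness in the middle, suppose $\alpha\in C^p(\tilde X_0)$ satisfies $\partial\alpha=0$ in $H^{2p}(\tilde X_1)$ (noting $C^p(\tilde X_1)\subset H^{2p}(\tilde X_1)$). By Proposition~\ref{prop:HX}, there exists $\beta\in H^{2p}(X)$ mapping to $\alpha$. The image $\bar\beta$ of $\beta$ in $\tilde H^{2p}(X)$ is then identified with $\alpha\in H^{2p}(\tilde X_0)$. Since $\alpha\in C^p(\tilde X_0)$ is by definition represented by an algebraic cycle on the nonsingular alteration $\tilde X_0$, the class $\bar\beta\in\tilde H^{2p}(X)$ is homologically Cartier, i.e.\ lies in $C^p(X)$, and plainly restricts to $\alpha$.

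The only mildly subtle point is recognizing that the ambient alteration chosen in the definition of ``homologically Cartier'' can be taken to be the same $\tilde X_0$ used in the exact sequence of Proposition~\ref{prop:HX}, so that algebraicity on $\tilde X_0$ together with membership in $\tilde H^{2p}(X)$ is exactly the condition defining $C^p(X)$; everything else is a direct transcription of Proposition~\ref{prop:HX} restricted to algebraic classes.
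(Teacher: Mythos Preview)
Your proof is correct and follows essentially the same approach as the paper: both reduce to the exactness of $\tilde H^{2p}(X)\to H^{2p}(\tilde X_0)\to H^{2p}(\tilde X_1)$ from Proposition~\ref{prop:HX} together with the identification $C^p(X)=\tilde H^{2p}(X)\cap C^p(\tilde X_0)$. The paper packages the argument via a commutative diagram with this bottom row, but the content is identical to your direct verification.
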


\begin{proof}
By definition the first arrow is injective. The sequence is also clearly
a complex by functoriality of $C^p(-)$ We just have to show that if $\alpha\in
C^p(\tilde X_0)$ maps to $0$ in the third group, then it must come
from the first.
 We  have a  commutative diagram
$$
\xymatrix{
 0\ar[r] & C^p(X)\ar[r]\ar@{-->}[d] & C^p(\tilde X_0)\ar[r]\ar[d] & C^p(\tilde X_1)\ar[d] \\ 
 0\ar[r] & \tilde H^{2p}(X)\ar[r] & H^{2p}(\tilde X)\ar[r] & H^{2p}(\tilde X_1)
}
$$
We see that $\alpha$ maps to $0$ in
$H^{2p}(\tilde X_1)$. Therefore it lies in $C^p(X)=\tilde H^{2p}(X)\cap
C^{p}(\tilde X_0)$ by exactness of the bottom row.
\end{proof}

\begin{rmk}\label{rmk:1}
  It will be useful to say a few words about the geometry of $\tilde
  X_1$, when $\tilde X\to X$ is desingularization and $X$ is
  irreducible. Then we may also assume that $\tilde X$ is
  irreducible.  Let $\Sigma\subset X$ the maximal closed set over
  which $f$ is not an isomorphism. If $E= f^{-1}\Sigma$, then
$\tilde X\times_X\tilde X$ is a union of $\tilde X$ embedded
diagonally and $E\times_\Sigma E$. Thus $\tilde X_1$ can be taken to
be a disjoint union of $\tilde X$ and an alteration $\tilde X_1'$  of
$E\times_\Sigma E$. The proposition and its corollary holds when
$\tilde X_1$ is replaced by $\tilde X_1'$.
\end{rmk}

Let us discuss some examples. Suppose that  $E$ is a vector bundle on $X$. The usual cohomological 
Chern class $c_p(E)\in H^{2p}(X)$ is homologically Cartier because it pulls back to an algebraic cycle on $\tilde X$.
Let us say that a cycle is {\em Cartier } if it is $\Q$-linear combination of Chern classes of vector bundles.
We will see later that not every homologically Cartier cycle is Cartier.  We   lay the groundwork now, by
giving a different source of examples.
Suppose that $X$ is a normal projective surface with  a desingularization $\pi:\tilde X\to X$ with exceptional divisors
$E_i$. Given a Weil divisor $D$ on $X$, with strict transform $D'$,  Mumford \cite{mumford}  constructed 
a unique $\Q$-divisor $\pi^*D = D'+\sum a_i E_i$ on $\tilde X$ for which $\pi^*D\cdot E_j=0$ for all $j$. We have a Mayer-Vietoris  type
sequence
$$H^2(X)\to H^2(\tilde X)\to \bigoplus H^2(E_j)$$
which shows  that $[\pi^*D]\in H^2(X)$. Thus we have proved:

\begin{lemma}\label{lemma:mumford}
A Weil divisor $D$ on a normal projective surface gives a homologically Cartier cycle, namely $\pi^*D$.
\end{lemma}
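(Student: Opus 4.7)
The plan is to show that the class $[\pi^*D] \in H^2(\tilde X)$ lies in the image of $\pi^*$; since $\pi^*D$ is by construction an algebraic $\Q$-cycle on $\tilde X$, this will identify the class as homologically Cartier in the sense of the earlier definition. The tool is the Mayer--Vietoris type sequence indicated in the statement,
$$H^2(X) \to H^2(\tilde X) \to \bigoplus_j H^2(E_j),$$
after which it suffices to check that the restriction of $[\pi^*D]$ to each exceptional component $E_j$ vanishes.

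First I would justify the exact sequence. Let $\Sigma \subset X$ be the (finite) singular locus, so $E = \pi^{-1}(\Sigma) = \bigcup E_j$ is the exceptional divisor. Cohomological descent for the blow-up square $(\Sigma,E) \subset (X,\tilde X)$ gives a long exact sequence
$$\cdots \to H^i(X) \to H^i(\tilde X) \oplus H^i(\Sigma) \to H^i(E) \to H^{i+1}(X) \to \cdots,$$
and since $\Sigma$ is zero-dimensional the middle term in degree $2$ reduces to $H^2(\tilde X)$. Decomposing $H^2(E)$ into contributions from the irreducible components by the normalization sequence for the reducible curve $E$, the question becomes whether $[\pi^*D]$ vanishes in each $H^2(E_j,\Q)$.

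Next I would compute the restriction. Because $\tilde X$ is a smooth projective surface, Poincar\'e duality on $\tilde X$ identifies the pairing of $[\pi^*D]|_{E_j}$ against the fundamental class of $E_j$ with the intersection number $\pi^*D \cdot E_j$ on $\tilde X$; by Mumford's defining condition this number is zero. Since $H^2(E_j,\Q)$ is one-dimensional for an irreducible curve $E_j$, the restriction itself is zero. Combining this with the exact sequence above places $[\pi^*D]$ in the image of $\pi^*$, so it represents a well-defined class in $\tilde H^2(X)$ which is represented by the algebraic cycle $\pi^*D$, finishing the proof.

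The only technical point I anticipate is the first step, namely the exactness of the blow-up sequence when $X$ is singular. This is supplied by Deligne's cohomological descent with rational coefficients (the same mechanism used throughout the paper), so it is more bookkeeping than a real obstacle.
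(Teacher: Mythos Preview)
Your proposal is correct and follows essentially the same approach as the paper: use the Mayer--Vietoris sequence for the blow-up square to reduce to checking that $[\pi^*D]$ restricts to zero on each exceptional $E_j$, which holds by Mumford's defining condition $\pi^*D\cdot E_j=0$. You have in fact supplied more detail than the paper, which states the sequence and the conclusion in a single sentence.
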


 If $X$ is a higher dimensional normal projective variety
over a field of characterstic zero,  Boucksom, de Fernex, Favre  and Urbanati \cite{bff} generalize this as follows.
They call a Weil divisor $D$ on $X$ numerically $\Q$-Cartier if on some desingularization $\pi:\tilde X\to X$, there exists
a (necessarily unique) $\Q$ divisor $\pi^*D$ on $\tilde X$ which is $\pi$-trivial and for which $\pi_*\pi^*D=D$. The $\pi$-triviallity condition
means that the intersection number $\pi^*D\cdot C=0$ for any curve that gets contracted under $\pi$.  We claim that $[\pi^*D]\in \im H^2(X)$.
This will imply that $\pi^* D$ is homologically Cartier; in fact, the conditions of being  homologically Caritier is really the same as the condition
of being  numerically Cartier  in this case. The claim follows from the  next lemma.

\begin{lemma}
 A  $\Q$-divisor $F$ on $\tilde X$ is $\pi$-trivial if and only if $[F]\in \im H^2(X)$.
\end{lemma}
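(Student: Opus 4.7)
The forward direction is a projection-formula computation. If $[F]=\pi^*\alpha$ for some $\alpha\in H^2(X)$ and $C\subset\tilde X$ is any curve with $\pi(C)$ a point, then $\pi_*[C]=0$ in $H_2(X)$, so
$$F\cdot C = \langle \pi^*\alpha,[C]\rangle = \langle \alpha,\pi_*[C]\rangle = 0.$$

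For the converse I would invoke Proposition~\ref{prop:HX}, which identifies $\im(H^2(X)\to H^2(\tilde X))$ with $\ker\partial$ for $\partial=p_0^*-p_1^*\colon H^2(\tilde X)\to H^2(\tilde X_1)$; thus it suffices to show $\partial[F]=0$. By Remark~\ref{rmk:1}, the vanishing on the diagonal copy of $\tilde X$ inside $\tilde X_1$ is automatic, so the task reduces to verifying $\partial[F]=0$ on an alteration $\tilde X_1'$ of $E\times_\Sigma E$, where $\Sigma\subset X$ is the non-isomorphism locus of $\pi$ and $E=\pi^{-1}\Sigma$.

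Next, choose compatible resolutions $\tilde\Sigma\to\Sigma$ and $\tilde E\to E$ fitting into a morphism $q\colon\tilde E\to\tilde\Sigma$, and arrange, by further altering $\tilde X_1'$ if necessary, that the projections $p_i\colon \tilde X_1'\to\tilde X$ lift to morphisms $\tilde p_i\colon \tilde X_1'\to\tilde E$ satisfying $q\circ\tilde p_0=q\circ\tilde p_1$. Writing $\iota\colon\tilde E\hookrightarrow\tilde X$, one has $\partial[F]=\tilde p_0^*\iota^*[F]-\tilde p_1^*\iota^*[F]$, so it is enough to prove $\iota^*[F]\in q^*H^2(\tilde\Sigma)$. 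The $\pi$-triviality hypothesis translates conveniently: any curve $C\subset\tilde E$ contracted by $q$ maps to a point of $\Sigma$, hence is $\pi$-contracted in $\tilde X$, so $\iota^*[F]\cdot C = F\cdot\iota_*C = 0$.

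The main obstacle is this last relative pull-back statement. I would attack it through the Leray spectral sequence for $q$ restricted to a Zariski open $U\subset\tilde\Sigma$ over which $q$ is smooth projective with connected fibers; Deligne's degeneration theorem then gives
$$H^2(q^{-1}U,\Q)\;\cong\; H^2(U,\Q)\;\oplus\; H^1(U,R^1q_*\Q)\;\oplus\; H^0(U,R^2q_*\Q).$$
Fiberwise, $\iota^*[F]$ restricts to a numerically trivial algebraic divisor class on each smooth projective fiber of $q|_U$, hence to zero in $H^2$ of that fiber with $\Q$-coefficients; this kills the $H^0(U,R^2q_*\Q)$ component. Controlling the $H^1(U,R^1q_*\Q)$ component (the normal-function obstruction) and then extending the identity across the discriminant of $q$ back to all of $\tilde\Sigma$ appear to require additional input: the Hodge type $(1,1)$ of $\iota^*[F]$, the local invariant cycle theorem, and possibly an induction on $\dim X$ whose base case is the normal surface situation already covered by Lemma~\ref{lemma:mumford}.
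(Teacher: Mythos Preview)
Your forward direction is correct and matches the paper's argument.

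For the converse, your reduction via Proposition~\ref{prop:HX} to $\partial[F]=0$ in $H^2(\tilde X_1)$ is the right first move, and the further reduction to showing $\iota^*[F]\in q^*H^2(\tilde\Sigma)$ is reasonable (indeed, by the Mayer--Vietoris sequence for the blow-up square this is equivalent to $[F]\in\im H^2(X)$ when $\Sigma$ and $E$ are already smooth). But from this point on your argument is genuinely incomplete, and you acknowledge as much: the Leray/Deligne decomposition disposes of the $H^0(U,R^2q_*\Q)$ component, but you have no mechanism to kill the $H^1(U,R^1q_*\Q)$ component, and no way to pass from the smooth locus $U$ back to all of $\tilde\Sigma$. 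Invoking ``Hodge type $(1,1)$'', the local invariant cycle theorem, and an unspecified induction is not a proof; in particular, a $(1,1)$-class on a fibred surface can perfectly well have nonzero normal-function component, so type alone will not suffice.

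The paper's route is entirely different and avoids the Leray spectral sequence. The key observation you are missing is that for divisors on a smooth projective variety, homological triviality and \emph{numerical} triviality coincide (because $\mathrm{NS}\otimes\Q$ injects into $H^2$). Hence $\partial[F]=0$ in $H^2(\tilde X_1)$ is equivalent to $(p_1^*F-p_2^*F)\cdot C=0$ for every irreducible curve $C\subset\tilde X_1$. The paper then does a direct case analysis on the image $C'=\pi(p_1(C))\subset X$: if $C'$ is a point, each $p_i(C)$ is either a point or a $\pi$-contracted curve, so $p_i^*F\cdot C=F\cdot p_{i*}C=0$ by the $\pi$-triviality hypothesis; if $C'$ is a curve, a comparison of degrees in the commutative square formed by $C,\,C_1=p_1(C),\,C_2=p_2(C),\,C'$ gives $(p_1^*F-p_2^*F)\cdot C=0$. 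This is far more elementary than your approach---no spectral sequences, no invariant cycle theorem, no induction---and the $\pi$-triviality hypothesis is used exactly where one expects, namely on curves lying over points of $X$.
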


\begin{proof}

One direction is clear, if $[F]\in \im H^2(X)$, then $F\cdot C = \pi_*[F]\cdot 0=0$ for any curve $C$ contracted by $\pi$.

The converse hinges on the well known fact that numerical equivalence and homological equivalence for divisors
on a smooth projective variety coincide (because the Neron-Severi group tensor $\Q$ injects into $H^2$).
Thus using proposition \ref{prop:HX}, we have to show that $p_1^*F-p_2^*F$ is numerically trivial
on $\tilde X_1$, where $p_i:\tilde X_1\to \tilde X$ are the projections. Let $C\subset \tilde X_1$ be an irreducible curve.
Let $C_i=p_i(C)$ and $C'= \pi\circ p_1(C)$ with reduced structures.  The diagram
$$
\xymatrix{
 C\ar[r]^{d_1}\ar[d]^{d_2} & C_1\ar[d]^{e_1} \\ 
 C_2\ar[r]^{e_2} & C'
}
$$
commutes. First, let us suppose that $C'$ is a curve. Let us replace the curves in the diagram by their normalizations.
The degrees  of the maps  $d_i, e_i$ are indicated in the diagram.
We have  that $d_1e_1=d_2e_2$ is the degree of $C\to C'$.  
Let $F'$ be the pushforward of the zero cycle $F|_C$ under $C\to C'$. Then
$$(p_1^*F-p_2^*F)\cdot C= d_1F\cdot C_1 - d_2 F\cdot C_2= d_1e_1\deg(F') - d_2e_2\deg(F')=0$$
The remaining case is when $C'$ is a point. Then $C_i$ is either a point or  a curve contracted by $\pi$. In
either case $p_i^*F\cdot C = F\cdot p_{i*}C = 0$.

\end{proof}

There are various ways in which this construction extends to higher rank
sheaves. We look at a particularly simple case.
Suppose that $X$ is a smooth projective
variety over a field of characteristic $0$ on which a finite group $G$
acts. Then the quotient $Y=X/G$ is well known to exists in the category of normal
projective varieties. Let $E$ be reflexive sheaf on $Y$.  Then we can
homologically Cartier ``Chern classes'' $c_p(E)\in H^{2p}(Y)$. Here
is the construction: $E$
restricts to a locally free sheaf on the smooth locus $U$. This can be
pulled  back to the preimage of $U$ in $X$ and extended to give a
locally free sheaf $F$ on $X$. The Chern classes $c_p(F)\in H^{2p}(X)$
are necessarily $G$-invariant, so they define cohomology classes on
$Y$ thanks to the isomorphism $H^{2p}(Y)= H^{2p}(X)^G$. These are
homologically Cartier by definition because  $X\to Y$ is a nonsingular alteration.

An additional source of examples of homologically Cartier cycles will be discussed in section \ref{sect:opchow}.

\section{Hodge cycles}\label{sect:HC}

In this section, we work exclusively over $\C$ and take $H^*(X)$ to be singular cohomology with its canonical mixed
Hodge structure \cite{deligneH}.  The quotient $\tilde H^i(X) = H^i(X)/W_i$ is a pure Hodge structure of weight $i$.
By a Hodge cycle of weight $2p$ on  a mixed Hodge structure $H$,  we will mean an element of
$$Hom_{MHS}(\Q(-p), H)\cong Hom_{MHS}(\Q(0),H(p)).$$
 More concretely, this is given an element of
$(2\pi i)^pH_\Q\cap W_{2p}\cap F^pH$, or simply $(2\pi i)^pH_\Q \cap
F^pH$ when $W_{2p}=H$. Let us now normalize things so that when $X$ is smooth and projective,  the
image of the cycle map on $CH^p(X)_\Q$  lies in $H^{2p}(X, \Q(p))=H^{2p}(X,(2\pi i)^p\Q)$ (as a lattice  $H^{2p}(X, \C)$). 
This will make certain statements appear more natural.
Here is the key observation:

\begin{prop}
 If $X$ is  a projective variety, 
  the image of $C^p(X)\to \tilde H^{2p}(X, \Q(p))$ consists of Hodge cycles
  of weight $2p$. The converse is  true if   the Hodge conjecture, in
  degree $2p$,   holds for a resolution of $X$. In particular, the
  result holds unconditionally for $p=1$.
\end{prop}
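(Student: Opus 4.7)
The plan is to run both directions through the exact sequence of Proposition~\ref{prop:HX} combined with Corollary~\ref{cor:Cseq}, using the fact that $\tilde H^{2p}(X)$ sits inside $H^{2p}(\tilde X)$ as a sub–Hodge structure of pure weight $2p$.

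For the forward direction, take $\alpha\in C^p(X)$, and fix a nonsingular alteration $\pi:\tilde X\to X$ on which $\alpha$ is represented by an algebraic cycle, i.e.\ $\pi^*\alpha\in H^{2p}(\tilde X,\Q(p))$ is algebraic. Since $\tilde X$ is smooth and projective, $\pi^*\alpha$ is a Hodge class in $H^{2p}(\tilde X)$ under the chosen normalization of the cycle map. By Lemma~\ref{lemma:W}, the map $\tilde H^{2p}(X)\hookrightarrow H^{2p}(\tilde X)$ is an injection, and by construction of the canonical mixed Hodge structure it is a morphism of Hodge structures identifying $\tilde H^{2p}(X)$ with a pure weight $2p$ sub-Hodge structure of $H^{2p}(\tilde X)$. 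Hence $\alpha$ itself, viewed in $\tilde H^{2p}(X,\Q(p))$, lies in $F^p\cap \tilde H^{2p}(X,\Q(p))$, which is exactly the space of weight $2p$ Hodge cycles on $\tilde H^{2p}(X)$.

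For the converse, assume the Hodge conjecture in degree $2p$ for some nonsingular alteration $\pi:\tilde X\to X$, and let $\alpha\in \tilde H^{2p}(X,\Q(p))$ be a Hodge cycle. Since $\tilde H^{2p}(X)\subset H^{2p}(\tilde X)$ is a sub-Hodge structure, $\alpha$ becomes a Hodge cycle on the smooth projective variety $\tilde X$, hence by hypothesis it is the class of an algebraic cycle on $\tilde X_0=\tilde X$; in particular $\alpha\in C^p(\tilde X_0)$. Now choose a nonsingular alteration $\tilde X_1\to \tilde X_0\times_X\tilde X_0$ as in Proposition~\ref{prop:HX}. That proposition gives exactness of
\[
H^{2p}(X)\to H^{2p}(\tilde X_0)\xrightarrow{\ \partial\ } H^{2p}(\tilde X_1),
\]
so $\partial\alpha=0$ in $H^{2p}(\tilde X_1)$, and a fortiori in $C^p(\tilde X_1)$. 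Applying Corollary~\ref{cor:Cseq}, the exact sequence
\[
0\to C^p(X)\to C^p(\tilde X_0)\to C^p(\tilde X_1)
\]
produces a (unique) lift of $\alpha$ to an element of $C^p(X)$, which is what we wanted.

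Finally, for $p=1$ the Hodge conjecture on the smooth projective variety $\tilde X$ is the classical Lefschetz $(1,1)$ theorem, hence holds unconditionally; this gives the last sentence. The only step requiring any care is verifying that $\tilde H^{2p}(X)\hookrightarrow H^{2p}(\tilde X)$ is genuinely a morphism of Hodge structures making the identification of Hodge classes on the two sides compatible; everything else is a direct diagram chase through the exact sequences already established.
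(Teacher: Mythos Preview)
Your proof is correct and follows essentially the same approach as the paper: the paper's proof simply says ``This follows immediately from the diagram given in the proof of corollary~\ref{cor:Cseq}'', and what you have written is exactly the diagram chase that sentence encodes. The only difference is that you have spelled out the details (including the compatibility of Hodge structures under $\tilde H^{2p}(X)\hookrightarrow H^{2p}(\tilde X)$ and the Lefschetz $(1,1)$ case), whereas the paper leaves these to the reader.
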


\begin{proof}
 This follows immediately from the diagram given  in the proof of
 corollary \ref{cor:Cseq}.
\end{proof}

From the extension
$$0\to W_{2p-1}H^{2p}(X)\to H^{2p}(X)\to \tilde H^{2p}(X)\to 0$$
together with the fact that
$$Hom_{MHS}(\Q(-p), W_{2p-1}H^{2p}(X))=0$$
we obtain an injective map:

\begin{lemma}\label{lemma:HCinj}
  $$ Hom_{MHS}(\Q(-p), H^{2p}(X))\hookrightarrow Hom_{MHS}(\Q(-p), \tilde
H^{2p}(X))$$
\end{lemma}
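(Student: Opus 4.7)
The plan is to apply the left-exact functor $Hom_{MHS}(\Q(-p),-)$ to the short exact sequence
\[
0 \to W_{2p-1}H^{2p}(X) \to H^{2p}(X) \to \tilde H^{2p}(X) \to 0
\]
displayed just before the lemma, obtaining the exact sequence
\[
0 \to Hom_{MHS}(\Q(-p), W_{2p-1}H^{2p}(X)) \to Hom_{MHS}(\Q(-p), H^{2p}(X)) \to Hom_{MHS}(\Q(-p), \tilde H^{2p}(X)).
\]
The conclusion then follows once the leftmost group is shown to vanish, which is what I would do as the single substantive step.

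To verify that $Hom_{MHS}(\Q(-p), W_{2p-1}H^{2p}(X))=0$, I would argue by a weight-comparison. The Tate twist $\Q(-p)$ is pure of weight $2p$, while $W_{2p-1}H^{2p}(X)$ has weights at most $2p-1$ by construction. For any morphism $f:\Q(-p)\to W_{2p-1}H^{2p}(X)$ of mixed Hodge structures, the image $f(\Q(-p))$ is a subobject of $W_{2p-1}H^{2p}(X)$ in the abelian category of mixed Hodge structures, so it has weights $\leq 2p-1$; on the other hand, as a quotient of $\Q(-p)$, it is pure of weight $2p$. A mixed Hodge structure that is simultaneously pure of weight $2p$ and concentrated in weights $\leq 2p-1$ must be zero, so $f=0$.

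The only mild subtlety, and what I would take a moment to justify cleanly, is the assertion that images and quotients of morphisms of mixed Hodge structures carry intrinsic mixed Hodge structures with the expected weight behavior; this is of course the strictness theorem of Deligne, namely that the category of mixed Hodge structures is abelian and that morphisms are strict with respect to $W$. Given that, there is no further work: the vanishing of the leftmost Hom and left-exactness of $Hom_{MHS}(\Q(-p),-)$ together give the desired injectivity. In short, the main and essentially only obstacle is the weight-purity comparison, and everything else is formal.
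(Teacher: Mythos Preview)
Your proof is correct and follows exactly the paper's approach: the paper states the short exact sequence and the vanishing $Hom_{MHS}(\Q(-p), W_{2p-1}H^{2p}(X))=0$ immediately before the lemma, deducing injectivity from left-exactness of $Hom_{MHS}(\Q(-p),-)$. Your additional justification of the vanishing via strictness of the weight filtration is a welcome elaboration of what the paper leaves implicit.
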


A homologically Cartier cycle $\alpha$ gives an element
$Hom_{MHS}(\Q(-p), \tilde H^{2p}(X))$
Under the connecting map, we obtain a class
$$\varepsilon(\alpha)\in Ext_{MHS}^1(\Q(-p), W_{2p-1}H^{2p}(X))$$
Let
$$\varepsilon_1(\alpha)\in Ext_{MHS}^1(\Q(-p), Gr^W_{2p-1}H^{2p}(X))$$
denote the image of the previous class in the $Ext$ group above. These
give the obstructions to lifting $\alpha$ to a Hodge cycle  in
$H^{2p}(X)$ and $H^{2p}(X)/W_{2p-2}$ respectively. 

We want to describe
$\varepsilon$ and $\varepsilon_1$ in more explicit terms.
By  work of Carlson \cite{carlson},  the above two $Ext$ groups can be
identified with the intermediate Jacobians
\begin{equation}
  \label{eq:epsilon}
JW_{2p-1}H^{2p}(X) =  \frac{W_{2p-1}H^{2p}(X)}{F^pW_{2p-1}H^{2p}(X,\C)+ W_{2p-1}H^{2p}(X,\Q)}  
\end{equation}
and 
\begin{equation}
  \label{eq:epsilon1}
 JGr^W_{2p-1}H^{2p}(X)=
\frac{Gr^W_{2p-1}H^{2p}(X)}{F^pGr^W_{2p-1}H^{2p}(X,\C)+
  Gr^W_{2p-1}H^{2p}(X,\Q)}
\end{equation}
respectively.
Carlson  gives a recipe for computing the  extension classes. Choose lifts  (which exist) $A\in F^pH^{2p}(X)$ and
$B\in H^{2p}(X,\Q)$ of the Hodge cycle $[\alpha]$, then
the difference $A-B$ lies in $W_{2p}H^{2p}(X)$.
The obstruction
$\varepsilon(\alpha)$
is the class of $A-B$ in the quotient in \eqref{eq:epsilon}. We can also consider a sequence
of intermediate obstructions $\varepsilon_1(\alpha),\ldots$ given by
the projection of $\varepsilon(\alpha)$ to  \eqref{eq:epsilon1} etc.
To proceed further,
fix a smooth projective augmented simplicial scheme $\tilde X_\dt\to
X$ satisfying cohomological descent. We only require that this be a
semi or strict simplicial object, which means that there are face maps
$p_i:\tilde X_j\to \tilde X_{j-1}$, but not degeneracy maps in the
backwards direction. In practice, this makes the constructions more
economical (see proposition~\ref{prop:gnpp}). Let $(\E^\dt(\tilde X_j),d)$ denote the de Rham
complex. This forms a double complex $(\E^\dt(\tilde X_\dt), d,\pm \partial)$,
where $\partial=\sum (-1)^ip_i^*$ denotes the simplicial boundary  (we work up to sign). We can form the total complex,
$E^n= \bigoplus_{a+b=n} \E^a(\tilde X_b)$, with differential
$d\pm \partial$.  This is filtered by
$F^p E^\dt = \bigoplus_{a\ge p} \E^{a,b}(\tilde X_c)$. Then $A$ can be
represented by an element  
$$A=(A_0,A_1,\ldots) \in F^pE^{2p} = F^p\E^{2p}(\tilde X_0)\oplus
F^p\E^{2p-1}(\tilde X_1)\oplus\ldots$$
 Similarly $B$ can be
represented by an element  $(B_0,\ldots) \in E_\Q^{2p}$, where $E_\Q$ denotes the total
complex of the $C^\infty$ singular cochain complex with coefficients
in $\Q$. To make sense of $A-B$, we can either push $A$ into
$E_\Q^{2p}\otimes \C$ under the  quasi-isomorphism $\E^\dt\to \E^\dt_\Q\otimes
\C$ defined by integration; or we can replace $(B_0,\ldots)$ by a
sequence of differential forms with rational periods. In the second
case, we may assume that $A_0=B_0$. We are now in a position to
extract an explicit description. The torus  $JGr^W_{2p-1}H^{2p}(X)$ is
a subquotient of  the Griffiths' intermediate Jacobian
$$J^p(\tilde X_1)_\Q= \frac{H^{2p-1}(\tilde X_1)}{F^p H^{2p-1}(\tilde X_1)+
  H^{2p-1}(\tilde X_1,\Q)}$$
Any homologically trivial cycle $Z$ on $\tilde X_1$ determines an
element $AJ(Z)\in J^p(\tilde X_1)_\Q$. We will recall the construction in
the proof below.

\begin{prop}\label{prop:epsilon1}
  If $\alpha$ is a homologically Cartier cycle, $\varepsilon_1(\alpha)$ is
(up to sign)  the image 
$ AJ(\partial\alpha)$ under the map $\ker[ J^p(\tilde X_1)\to J^p(\tilde X_2)]_\Q\to JGr^W_{2p-1}H^{2p}(X)$.
\end{prop}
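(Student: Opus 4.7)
The plan is to apply Carlson's recipe, as recalled just before the proposition, to the explicit double-complex model $(\E^\dt(\tilde X_\dt), d, \partial)$. Represent the homologically Cartier cycle $\alpha$ by an algebraic cycle $Z$ on $\tilde X_0$; its current of integration $\delta_Z$ is a closed $(p,p)$-current representing both the $F^p$ Hodge class and the rational class of $Z$, so after pushing into a common $C^\infty$-cochain model we may take $A_0 = B_0 = \delta_Z$. By Corollary~\ref{cor:Cseq}, the fact that $\alpha \in \tilde H^{2p}(X)$ forces $[\partial Z] = 0$ in $H^{2p}(\tilde X_1)$, so we can choose $A_1 \in F^p \E^{2p-1}(\tilde X_1)$ with $dA_1 = \pm\,\partial A_0$, and picking a rational $(2p-1)$-chain $T$ on $\tilde X_1$ with $\partial T = \partial Z$ we take $B_1$ to be the cochain represented by $T$, so that $dB_1 = \pm\,\partial B_0$. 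Higher components $A_j, B_j$ ($j \ge 2$) extend similarly, using the $E_2$-degeneration of the descent spectral sequence established in the proof of Proposition~\ref{prop:HX}.

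With these choices $A - B = (0, A_1 - B_1, A_2 - B_2, \ldots)$ is a cocycle in the total complex representing $\varepsilon(\alpha)$ by Carlson. Under the identification $Gr^W_{2p-1} H^{2p}(X) \cong E_2^{1,2p-1}$ coming from the degenerate descent spectral sequence, the projection from $\varepsilon(\alpha)$ to $\varepsilon_1(\alpha)$ amounts to reading off the $\tilde X_1$-component, viewed as a closed class in $H^{2p-1}(\tilde X_1, \C)$ modulo $F^p$ (from $A_1$) and modulo rational periods (from $B_1$). That is, $\varepsilon_1(\alpha)$ is the image of $[A_1 - B_1] \in J^p(\tilde X_1)_\Q$ in $JGr^W_{2p-1} H^{2p}(X)$. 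Since $A_1 \in F^p$, we have $[A_1 - B_1] \equiv -[B_1] \pmod{F^p}$, and $[B_1] = [\delta_T]$ modulo $F^p$ and rational periods is by definition $AJ(\partial Z)$. This yields the claimed equality, up to sign.

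Finally, I would verify that $AJ(\partial\alpha)$ actually lies in $\ker[J^p(\tilde X_1) \to J^p(\tilde X_2)]_\Q$: the cosimplicial identity $\partial^2 = 0$ applied to $Z$ gives $\partial(\partial Z) = 0$ as a cycle on $\tilde X_2$, and functoriality of Abel--Jacobi under proper pullback then makes the image of $AJ(\partial Z)$ in $J^p(\tilde X_2)$ vanish. The main technical obstacle is matching the abstract weight filtration on $H^{2p}(X)$ with the $\partial$-filtration on the total complex and confirming the identification $Gr^W_{2p-1} H^{2p}(X) \cong E_2^{1,2p-1}$ compatibly with the Hodge filtration; related to this, one must ensure that the currents-versus-cochains juggling used to define $A - B$ is legitimate, so that the final Carlson class and the Abel--Jacobi class are being compared in the same group.
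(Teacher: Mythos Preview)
Your proposal is correct and follows essentially the same route as the paper's proof: both compute $\varepsilon_1(\alpha)$ via Carlson's recipe in the total complex of $\tilde X_\dt$, arrange $A_0=B_0$, and then identify the class of $A_1-B_1$ in $J^p(\tilde X_1)_\Q$ with $AJ(\partial\alpha)$ using that $A_1\in F^p$ contributes nothing modulo $F^p$. The paper phrases the last step through the Poincar\'e duality isomorphism $F^{n-p+1}H^{2n-2p+1}(\tilde X_1)^*\cong H^{2p-1}(\tilde X_1)/F^p$ and integration over a bounding chain $\Gamma$, while you invoke the $E_2^{1,2p-1}$ description of $Gr^W_{2p-1}$ and argue with the current $\delta_T$ directly; these are the same computation in different language. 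One small point: your ``rational $(2p-1)$-chain $T$'' should be a $(2n-2p+1)$-chain (its associated current has degree $2p-1$), and since you take $A_0=\delta_Z$ as a current you are implicitly working in the current double complex from the outset, which is fine but worth saying explicitly---the paper instead starts with smooth forms (with $B_i$ having rational periods) and passes to currents only for the final comparison.
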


\begin{proof}
The expressions $\varepsilon_1(\alpha)$ and
$ AJ(\partial\alpha)$ will be summed over the connected components of
$\tilde X_1$. So without loss of generality, we can assume that it is
connected of dimension $n$. 
We have that $\partial \alpha$ is
homologically trivial. Therefore it is the
  boundary of a rational $C^\infty$ ($2n-2p+1$)-chain $\Gamma$.
Under Poincar\'e duality,
\begin{equation}
  \label{eq:PoincF}
F^{n-p+1}H^{2n-2p+1}(\tilde X_1)^*\cong H^{2p-1}(\tilde X_1)/F^p
H^{2p-1}(\tilde X_1)   
\end{equation}
Integration along $\Gamma$ defines a functional on $H^{2n-2p+1}(\tilde
X_1)$,
and therefore an element of the right side of \eqref{eq:PoincF}.
Its image  in $J^p(\tilde X_1)_\Q$ is precisely $AJ(\partial\alpha)$.

We assume that $B_i$ is a sequence of differential forms with
rational periods, and that $A_0=B_0$. Then $A_1-B_1$ determines a
closed form whose image in $JGr^W_{2p-1}H^{2p}(\tilde X_1)$ is
$\varepsilon_1(\alpha)$. Regarding $B_1$ as a current, we can choose it cohomologous to
the current $\gamma$ given by
$$\omega\mapsto \int_\Gamma\omega$$
The form $A_1$ defines the current
$$\omega\mapsto \int_{\tilde X_1}A_1\wedge\omega$$
which acts trivially on the left side of \eqref{eq:PoincF}. Therefore the
action of  $\pm(A_1-B_1)$ on  \eqref{eq:PoincF}  is
integration on $\Gamma$.

\end{proof}

We now give a simple example, where this obstruction is nontrivial.

\begin{ex}\label{ex:1}
  Let $C\subset\PP^2_\C$ be a nonsingular cubic. Let $Q_0\subset
  \PP^2$ be a very general quartic. The two curves meet in 12 very
  general points,
  $p_1,\ldots, p_{12}$. Blow up these points to get a surface $f:\tilde
  X\to \PP^2$ with exceptional divisors $E_1,\ldots, E_{12}$. 
Let $\tilde C\subset \tilde X$ be the strict transform of $C$ which is
abstractly the same curve. Let
$Q= f^*Q_0-\sum E_i$. We have that $Q^2=4$ and $Q\cdot \tilde
C=0$. Furthermore, $|Q|$ is base point free, so it contracts $\tilde
C$ to a point $p$ in a normal surface $X$. We build the augmented simplicial scheme 
$$\tilde X_1= \tilde C\rightrightarrows \tilde X_0= \tilde X\coprod
p\to X$$
Since $\tilde X_2=\emptyset$ and $H^1(\tilde X_0)=0$, we have $J(C)= JGr^W_{1}H^{2}(X)$.
Let $D=f^*L-E_1-E_2-E_3$, where
$L\subset \PP^2$ is a line. Then $D$ has degree $0$ on $\tilde C$, so
it is gives a homologically Cartier cycle on $X$.  Note however the
class of $D$ in the Jacobian of $\tilde C$ is nonzero because, the points
$p_1,p_2,p_3$ were very general and therefore noncolinear. So $\varepsilon_1(D)\not=0$.
\end{ex}

We want to say more about $\varepsilon(\alpha)$ when $p=1$ and $X$ is eventually
a surface.  In this
case, we will work integrally.  We define $W_1H^1(X,\Z)$ to be the 
intersection $W_1 H^2(X,\Q)$ with the torsion free part of $H^2(X,\Z)$.
Choose a simplicial scheme $\tilde X_\dt\to
X$  as above.
Let $Div_g(\tilde X_1)$ denote the space of divisors in general position with
respect to the maps $p_i$; more precisely, no component of $D\in
Div_g(\tilde X_1)$ should contain
the image of a component of $\tilde X_2$ under any $p_i$. Let $Div_h(\tilde
X_1)\subseteq Div_g(\tilde X_1)$ denote the subgroup of divisors which
are trivial in $H^2(\tilde X_1,\Z)$.  Let
$R(\tilde X_1)$ be the product of the fields of  rational functions on the
connected components of $\tilde X_1$, and let $R(\tilde X_1)^*$ denote
the group of units. Define $R_g(\tilde
X_1)^*\subseteq R(\tilde X_1) ^*$ to
be the subgroup of functions whose divisor lies in $Div_g(\tilde
X_1)$. Let $\C(\tilde X_1)^*\subset R_g(\tilde X_1)^*$ denote the
subgroup of locally constant functions.
By an easy moving argument, we can see that the sequence 
$$0\to \C(\tilde X_1)^*\to R_g(\tilde X_1) ^*\to Div_g(\tilde X_1)\to
Pic^0(\tilde X_1)\to 0$$
is exact.

We now assume that $X$ is a surface. Then either using  remark~\ref{rmk:1}
or proposition~\ref{prop:gnpp}, we can see that $\tilde X_2$ can be
chosen to be zero dimensional. We do so. Let $\partial$ denote
the multplicative simplicial coboundary. 
Then the quotient  
$$R_g(\tilde X_1)^*/\partial^{-1}\partial\C(\tilde X_1)^*\cong \C(\tilde X_2)^*/\partial\C(\tilde X_1)^*$$
is  a finite dimensional multiplicative torus.
Following Carlson \cite{carlson2}, we define the group
$$P(\tilde X_\dt) = Div_h(\tilde X_1)/\partial^{-1}\partial\C(\tilde X_1)^*$$
which is an extension of $Pic^0(\tilde X_1)$ by the torus
$\C(\tilde X_2)^*/\partial\C(\tilde X_1)^*$. We remark that if we
allow $\tilde X_2$ to have positive dimensional components,  then
$P(\tilde X_1)$  is the wrong object to work with as it could be
infinite dimensional (\cite{carlson2} is not very explicit about this issue).
Let $D$ be a divisor class on  $\tilde X_0$ giving a homologically
Cartier cycle on $X$. Then  $\partial D\in Div_h(\tilde X_1)$ by definition.
So we get an induced map $\partial: Pic^0(\tilde X_0)\to P(\tilde X_\dt)$.

\begin{prop}\label{prop:carlson}
  With the above assumption that $\dim \tilde X_2=0$, we have  an isomorphism
  \begin{equation}
    \label{eq:C1}
    Ext_{MHS}^1(\Z(-1), W_1H^2(X,\Z))\cong P(\tilde X_\dt)/\partial
Pic^0(\tilde X_0)
  \end{equation}
Let $\alpha$ denote a homologically Cartier element in $\tilde H^2(X)$, and
let $D$ be a divisor  on $\tilde X$ representing it (which exists by
the Lefschetz $(1,1)$ theorem).
Then  $\varepsilon(\alpha)=0$ 
if and only if the image of $\partial D$ under the isomorphism
\eqref{eq:C1} vanishes.

\end{prop}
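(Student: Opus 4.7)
The plan is to realize both sides of \eqref{eq:C1} as explicit quotients built from the simplicial resolution, and then to identify $\varepsilon(\alpha)$ by tracing Carlson's cocycle recipe through the double complex $(E^\dt,d\pm\partial)$ set up in Section~\ref{sect:HC}. Because $\dim \tilde X_2=0$, $H^1(\tilde X_2,\Z)=0$, and the descent spectral sequence shows that $W_1H^2(X,\Z)$ is an extension of the pure weight-$1$ Hodge structure $H^1(\tilde X_1,\Z)/\partial H^1(\tilde X_0,\Z)$ by the weight-zero piece $W_0H^2(X,\Z)=\mathrm{coker}(\partial\colon H^0(\tilde X_1,\Z)\to H^0(\tilde X_2,\Z))$, sitting in $E_\infty^{2,0}$.

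Applying $Ext^1_{MHS}(\Z(-1),-)$ and using Carlson's identifications $Ext^1_{MHS}(\Z(-1),H^1(Y,\Z))\cong Pic^0(Y)(\C)$ for smooth projective $Y$ and $Ext^1_{MHS}(\Z(-1),\Z(0))\cong \C^*$, together with the vanishing of higher $Ext$'s in $MHS$, yields a presentation
$$0 \to T \to Ext^1_{MHS}(\Z(-1),W_1H^2(X,\Z)) \to Pic^0(\tilde X_1)/\partial Pic^0(\tilde X_0) \to 0,$$
where $T\cong (\C^*)^{\mathrm{rk}\,W_0H^2}$ collects the Ext-contribution of $W_0H^2(X,\Z)$. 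On the geometric side, the short exact sequence of divisors and rational functions quoted just before the proposition, combined with the definition of $P(\tilde X_\dt)$, gives
$$0 \to \C(\tilde X_2)^*/\partial\C(\tilde X_1)^* \to P(\tilde X_\dt) \to Pic^0(\tilde X_1) \to 0.$$
A direct cocycle comparison identifies $\C(\tilde X_2)^*/\partial\C(\tilde X_1)^*$ with $T$, and taking the further quotient by $\partial Pic^0(\tilde X_0)$ yields \eqref{eq:C1}.

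For the second assertion I invoke Carlson's explicit recipe in the double complex setting of Section~\ref{sect:HC}. Choose $A=(A_0,A_1,A_2)\in F^1E^2$ and $B=(B_0,B_1,B_2)\in E^2_\Z$ both representing $[D]$, arranged so that $A_0=B_0$ is the Chern form/cocycle of $\mathcal{O}_{\tilde X_0}(D)$. Because $\partial D$ is homologically trivial on $\tilde X_1$, one can solve the cocycle conditions so that $A_1-B_1$ is closed and its periods on $H_1(\tilde X_1,\Z)$ produce the class of $\mathcal{O}(\partial D)\in Pic^0(\tilde X_1)$; the remaining mismatch $A_2-B_2$ is a locally constant $\C$-valued function on $\tilde X_2$ whose exponential is the element of $\C(\tilde X_2)^*/\partial\C(\tilde X_1)^*$ needed to lift $\mathcal O(\partial D)$ to a class in $P(\tilde X_\dt)$. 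Under the isomorphism \eqref{eq:C1}, this combined class is the image of $\partial D$ modulo $\partial Pic^0(\tilde X_0)$, and equals $\varepsilon(\alpha)$.

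The main technical obstacle is the integral/torus bookkeeping: over $\Q$ the sequence in the first paragraph splits, $T$ disappears, and the argument collapses to a standard Abel--Jacobi computation on $\tilde X_1$. Integrally, one must match the multiplicative torus $\C(\tilde X_2)^*/\partial\C(\tilde X_1)^*$ of $P(\tilde X_\dt)$ with the Ext-image of $W_0H^2(X,\Z)$, and pin down the sign of the simplicial multiplicative coboundary so that Carlson's recipe in the last step produces precisely $\partial D$ (as opposed to, e.g., its multiplicative inverse) in $P(\tilde X_\dt)/\partial Pic^0(\tilde X_0)$.
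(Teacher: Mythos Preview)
Your approach is genuinely different from the paper's, and in outline it is sound, but it is worth seeing how much work you are taking on. The paper's proof is a two-line appeal to black boxes: Deligne's equivalence between polarizable mixed Hodge structures of type $\{(0,0),(1,0),(0,1),(1,1)\}$ and $1$-motives, together with Carlson's theorem \cite[thm~A]{carlson2} identifying the $1$-motive of the maximal such sub-Hodge-structure $H\subseteq H^2(X)$ as $\partial\colon NS(\tilde X_0)\to P(\tilde X_\dt)/\partial Pic^0(\tilde X_0)$. Both the isomorphism \eqref{eq:C1} (this is the semi-abelian part of the $1$-motive, which realizes $Ext^1_{MHS}(\Z(-1),W_1H^2)$) and the vanishing criterion for $\varepsilon(\alpha)$ (the sub-$1$-motive $\Z(-1)\xrightarrow{[D]} G$ is trivial iff $\partial D=0$ in $G$) drop out immediately.

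Your route unpacks this by hand via the descent spectral sequence and Carlson's cocycle recipe. The decomposition of $W_1H^2(X,\Z)$ you describe is correct, and your two three-term sequences for $Ext^1_{MHS}(\Z(-1),W_1H^2)$ and for $P(\tilde X_\dt)/\partial Pic^0(\tilde X_0)$ do have matching outer terms. The one place where your sketch is thin is the phrase ``a direct cocycle comparison identifies\ldots'': matching the outer terms of two extensions of an abelian variety by a torus does not pin down the middle term, so you must either exhibit a natural map between the middle groups compatible with the outer isomorphisms, or compare extension classes. Carrying this out carefully is precisely the content of \cite[thm~A]{carlson2}, so you are not really avoiding Carlson --- you are reproving him. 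That is legitimate, and arguably more transparent than invoking the $1$-motive machinery, but it is not shorter.

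One small point: the sign bookkeeping you flag in your last paragraph is unnecessary for the proposition as stated. The claim is only that $\varepsilon(\alpha)=0$ if and only if the image of $\partial D$ vanishes, not that the two are equal on the nose; indeed the paper's remark after the proposition leaves the exact identification $\varepsilon(\alpha)=\pm\partial D$ as a presumption. So once you have any isomorphism \eqref{eq:C1} under which $\varepsilon(\alpha)$ corresponds to $\pm\partial D$, you are done.
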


\begin{rmk}
This statement is sufficient for our purposes, although presumably $\varepsilon(\alpha) =\pm\im \partial D$. 
\end{rmk}

\begin{proof}
By a theorem of Deligne \cite[\S 10]{deligneH}, the category of polarizable
mixed Hodge structures of type  $\{(0,0), (1,0), (0,1), (1,1)\}$ is
equivalent to the category of $1$-motives. Let $H\subseteq H^2(X)$ be
the maximal submixed Hodge of this type.  A theorem of
Carlson  \cite[thm A]{carlson2} says that $H$ 
  corresponds to the $1$-motive 
$$\partial: NS(\tilde X_0)\to P(\tilde X_\dt)/\partial
Pic^0(\tilde X_0) $$
Under this identification, the mixed Hodge structure $E\subset H$
given by the extension class $\varepsilon(\alpha)$:
$$
\xymatrix{
 0\ar[r] & W_1H^2(X,\Z)\ar[d]^{=}\ar[r] & E\ar[r]\ar[d] & \Z(-1)\ar[r]\ar[d]^{[D]} & 0 \\ 
 0\ar[r] & W_1H^2(X,\Z)\ar[r] & H\ar[r] & \tilde H^2(X,\Z) & 
}
$$
corresponds to the sub motive
$$\Z(-1)\stackrel{\partial D}{\to} P(\tilde X_\dt)/\partial
Pic^0(\tilde X_0) $$
The proposition is an immediate consequence.
\end{proof}

The next example is a variation on one due to Totaro \cite{totaro}.

\begin{ex}\label{ex:2}
  Let $X$ be a normal surface constructed as in example \ref{ex:1}, but
  with $C$ a nodal cubic.  We build a simplicial scheme
$$ *\stackrel{\to}{\rightrightarrows} \tilde X_1=\PP^1\coprod
p\rightrightarrows \tilde X_0=\tilde X\coprod p\to X$$
where the maps are built from inclusions, projections and the
normalization of $\tilde C$. In this case,  $\varepsilon_1(D)=0$
because $J(\tilde X_1)=0$, but the class of $D$ in
  $P(\tilde X_\dt)=\C^*$ is non torsion, so that $\varepsilon(D)\not=0$.
\end{ex}

The final example, which is a variation on one due to Barbieri Viale
and Srinivas \cite{bs}, gives an example of a homologically Cartier
cycle which is not Cartier.

\begin{ex}\label{ex:3}
  Let $X$ be a normal surface constructed as in example \ref{ex:1}, but
  with $C$ a cuspidal cubic.  We proceed as above, but now $ P(\tilde X_\dt)=0$, so $\varepsilon(D)=0$. But $D$ has nontrivial class in
  $Pic^0(C)=\C$, so it cannot be Cartier. Note that in this case, Hodge theory
  is too coarse to detect the Picard group.
\end{ex}
\section{Two false starts}

This section contains two initial attempts by the author to answer the main question about where Hodge cycles come from.
Although neither gives the correct answer, we have included this
material  because  we feel that it is  nevertheless instructive. 

\subsection{Fulton's original Chow ring}
Fulton \cite{fulton1} defined a Chow ring for projective variety as
the limit
$$CH_F^*(X)= \varinjlim CH^*(Y)$$
where $X\to Y$ varies over all maps to smooth projective varieties. There
is an isomorphism $K^0(X)_\Q\cong CH^*(X)_\Q$, where
$K^0(X)$ is the Grothendieck group of vector bundles. 
Then there is cycle map $CH^*_F(X)_\Q \to H^{2*}(X)$, which can be
identified with Chern character. Thus the  image  of this map lies
in the space of Cartier cycles and therefore Hodge cycles. However, as
we have seen in example \ref{ex:3}, Hodge cycles need not be Cartier.
Many other examples can be found in \cite{ack}.  Therefore the cycle
map on $CH^*_F(X)_\Q $ is not surjective in general.

\subsection{The operational Chow ring}\label{sect:opchow}
 Let $CH_{OP}^*(X)$ denote the
operational Chow ring of Fulton-Macpherson \cite[chap 17]{fulton}. 
An element 
 $\alpha\in CH_{OP}^p(X)$ is a collection of operators
 $``f^*\alpha\cap": CH_*(X')\to CH_{*-p}(X')$ varying over morphisms  $f:X'\to X$.
 These are required to commute with  pushforwards,  flat pullbacks, 
 and Gysin maps  in the sense of \cite[def 17.1]{fulton}. 
This is an associative graded
ring which is contravariant, and has cap products. Furthermore, when
$X$ is smooth and $n$ dimensional, the operational Chow ring is isomorphic to the usual
Chow ring $\bigoplus_i CH_{n-i}(X)$ with the intersection product.

\begin{thm}[Kimura {\cite[thm 2.3]{kimura}}]
Let $X$ be projective variety, and let $\pi:\tilde X\to X$ be a
 resolution of singularities. Then
$$0\to CH_{OP}^p(X)\stackrel{\pi^*}{\longrightarrow}  CH^p(\tilde X)
\stackrel{p_1^*-p_2^*}{\longrightarrow}  CH_{OP}^p(\tilde X\times_X \tilde
X)$$
is exact, where $p_i:\tilde X\times_X \tilde X\to \tilde X$ denote the projections.
\end{thm}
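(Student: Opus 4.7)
The key structural fact to exploit is that $\tilde X$ is smooth, so $CH^p(\tilde X) = CH_{OP}^p(\tilde X)$, and every subvariety $V\subset X$ admits a strict transform $\tilde V\subset \tilde X$ with $\pi_*[\tilde V] = [V]$ (and more generally, one has resolutions/alterations of any base change). I would handle injectivity and exactness at the middle separately, both by combining the projection formula with resolution of singularities on suitable fiber products.

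For injectivity, suppose $\alpha\in CH_{OP}^p(X)$ satisfies $\pi^*\alpha = 0$ in $CH^p(\tilde X)$. Given any morphism $f:Y\to X$ and any subvariety $V\subset Y$, form the base change $V\times_X \tilde X$ and choose a resolution $\tilde V$ of the (dominant) component mapping birationally to $V$, together with the induced map $\tilde f:\tilde V\to \tilde X$. By the defining compatibility of operational classes with proper pushforward, together with the projection formula,
$$\alpha\cap [V] = \alpha\cap \pi_{V*}[\tilde V] = \pi_{V*}\bigl(\pi_V^*\alpha \cap [\tilde V]\bigr) = \pi_{V*}\bigl(\tilde f^*(\pi^*\alpha) \cap [\tilde V]\bigr) = 0,$$
where $\pi_V:\tilde V\to V$ is the induced proper birational map. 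Since operational classes are determined by their action on cycles over all morphisms $f:Y\to X$, this forces $\alpha=0$.

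For exactness at the middle, suppose $\beta\in CH^p(\tilde X)$ satisfies $p_1^*\beta = p_2^*\beta$ in $CH_{OP}^p(\tilde X\times_X\tilde X)$. I would construct an operational class $\alpha$ on $X$ by declaring, for each $f:Y\to X$ and each subvariety $V\subset Y$, the value $\alpha\cap [V] := \pi_{V*}(\tilde f^*\beta \cap [\tilde V])$, where $\tilde V$ is a chosen resolution of a dominant component of $V\times_X\tilde X$ with maps $\pi_V:\tilde V\to V$ and $\tilde f:\tilde V\to \tilde X$. Independence of the choice of $\tilde V$ is the crux: given two such choices $\tilde V_1,\tilde V_2$, one dominates their fiber product over $V$, which maps to $\tilde X\times_X\tilde X$, and the hypothesis $p_1^*\beta = p_2^*\beta$ (applied operationally on this fiber product) identifies the two pullbacks after further pushforward to $V$. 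By construction $\pi^*\alpha = \beta$, since one can take $\tilde V = V$ itself when $Y = \tilde X$.

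The main obstacle is verifying that this recipe actually defines an operational class, i.e.\ that $\alpha$ commutes with proper pushforward, flat pullback, and refined Gysin homomorphisms in the sense of \cite[Def.~17.1]{fulton}, and that the definition extends from fundamental classes of subvarieties to all of $CH_*(Y)$ compatibly. Each axiom reduces, via a compatible choice of resolutions on the relevant fiber products, to the corresponding axiom for $\beta$ on the smooth variety $\tilde X$ (where operational and ordinary Chow coincide), using the projection formula to move operators through the proper maps $\pi_V$. The descent condition $p_1^*\beta = p_2^*\beta$ is precisely what is needed to match two such choices of resolution whenever a diagram has two natural lifts, which is exactly the content of the axioms for compatibility with Gysin pullback along morphisms that do not factor uniquely through $\tilde X$.
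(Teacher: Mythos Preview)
The paper does not give its own proof of this statement; it is quoted with attribution to Kimura \cite[thm~2.3]{kimura} and then used as a black box. Your outline is essentially the argument Kimura gives: injectivity via the projection formula applied to a resolution of the base-changed cycle, and exactness at the middle by \emph{defining} $\alpha\cap[V]:=\pi_{V*}(\tilde f^*\beta\cap[\tilde V])$ and using the descent hypothesis $p_1^*\beta=p_2^*\beta$ in $CH_{OP}^p(\tilde X\times_X\tilde X)$ to make this independent of the lift $\tilde V$.

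Two small points worth tightening. First, your claim ``one can take $\tilde V=V$ itself when $Y=\tilde X$'' is only literally true for smooth $V$; in general you take the diagonal component of $V\times_X\tilde X$ (which is a copy of $V$) and resolve it, and then a short projection-formula computation on $\tilde X$ gives $\pi^*\alpha=\beta$. Second, your recipe is stated on fundamental classes of irreducible subvarieties and extended by linearity; you must also check it descends to rational equivalence, which is not automatic from the definition alone but follows once the operational axioms (in particular compatibility with flat pullback along $Y\times\A^1\to Y$ and proper pushforward) are established. You correctly identify this axiom verification as the main labor; Kimura carries it out exactly along the lines you sketch, reducing each compatibility to the corresponding fact for $\beta$ on the smooth $\tilde X$ via a compatible choice of resolutions on the relevant fiber diagrams.
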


\begin{cor}\label{cor:kim1}
 Suppose that $\tilde X_1\to \tilde X\times_X \tilde X$ is a
 resolution and $\tilde X_1'$ is as in remark~\ref{rmk:1}. Then we have exact sequences 
$$0\to CH_{OP}^p(X)\to  CH^p(\tilde X)
\to CH^p(\tilde X_1)$$
and
$$0\to CH_{OP}^p(X)\to  CH^p(\tilde X)
\to CH^p(\tilde X_1')$$
\end{cor}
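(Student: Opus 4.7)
The plan is to apply Kimura's theorem twice and then exploit the geometric decomposition of $\tilde X\times_X \tilde X$ recorded in remark~\ref{rmk:1}.

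For the first sequence, I would start from the Kimura sequence just stated,
$$0\to CH_{OP}^p(X)\stackrel{\pi^*}{\longrightarrow} CH^p(\tilde X)\stackrel{p_1^*-p_2^*}{\longrightarrow} CH_{OP}^p(\tilde X\times_X\tilde X),$$
and compose the right hand map with the pullback $CH_{OP}^p(\tilde X\times_X\tilde X)\to CH^p(\tilde X_1)$ induced by the resolution $\tilde X_1\to \tilde X\times_X\tilde X$. Since the functor $CH_{OP}^p$ is contravariant and $CH^p$ agrees with $CH_{OP}^p$ on smooth varieties, the composed map on $CH^p(\tilde X)$ is still $p_1^*-p_2^*$. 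To conclude exactness of the composed sequence it suffices to show the pullback $CH_{OP}^p(\tilde X\times_X\tilde X)\to CH^p(\tilde X_1)$ is injective; but this is precisely Kimura's theorem applied a second time, now to the (reduced) variety $\tilde X\times_X\tilde X$ with its resolution $\tilde X_1$, where the first nontrivial map in that sequence is exactly this pullback.

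For the second sequence, I would invoke remark~\ref{rmk:1}, which expresses $\tilde X_1$ as a disjoint union $\tilde X\sqcup \tilde X_1'$, with $\tilde X$ embedded diagonally. On the diagonal component both projections $p_1,p_2$ restrict to the identity, so $p_1^*-p_2^*$ vanishes on the summand $CH^p(\tilde X)\subset CH^p(\tilde X_1)$. Hence the kernels of the maps $CH^p(\tilde X)\to CH^p(\tilde X_1)$ and $CH^p(\tilde X)\to CH^p(\tilde X_1')$ coincide, and the second sequence follows from the first.

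The argument is essentially formal once Kimura's theorem is in hand; the only point requiring a little care is its application to $\tilde X\times_X\tilde X$, which need not be irreducible or reduced, but since operational Chow groups depend only on the underlying reduced scheme this causes no genuine difficulty. The main conceptual content is simply the observation that Kimura's theorem has to be applied to two different varieties ($X$ and $\tilde X\times_X\tilde X$) and the two exact sequences composed.
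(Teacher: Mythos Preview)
Your argument is correct and is precisely the intended derivation: the paper states this corollary without proof, relying on the reader to make exactly the two applications of Kimura's theorem you describe (to $X$ and then to the reduced fibre product) together with the diagonal observation from remark~\ref{rmk:1}. The only delicate point, which you correctly flag, is that $\tilde X\times_X\tilde X$ may fail to be reduced, so one must pass to the underlying reduced scheme before invoking Kimura a second time; since $CH_{OP}^p$ is insensitive to nilpotents this is harmless.
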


\begin{cor}
 There is a natural ring homomorphism $CH_{OP}^*(X)_\Q\to C^*(X)$, which coincides with the usual cycle map, when $X$ is smooth.
\end{cor}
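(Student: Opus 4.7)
The plan is to construct the map using the two exact sequences already at our disposal, namely Kimura's sequence (Corollary \ref{cor:kim1})
$$0\to CH_{OP}^p(X)\to CH^p(\tilde X_0)\xrightarrow{p_0^*-p_1^*} CH^p(\tilde X_1)$$
and the sequence of homologically Cartier cycles (Corollary \ref{cor:Cseq})
$$0\to C^p(X)\to C^p(\tilde X_0)\to C^p(\tilde X_1),$$
where $\pi:\tilde X_0\to X$ is a resolution and $\tilde X_1$ is a resolution of (a component of) $\tilde X_0\times_X\tilde X_0$. The ordinary cycle class map on a smooth variety sends $CH^p(Y)_\Q$ into the space $C^p(Y)$ of algebraic classes (which, on a smooth $Y$, coincides with $C^p(Y)$ by the observation in Section 1). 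Naturality of the cycle map under pullback along $p_0$ and $p_1$ gives a commutative square connecting the two sequences.

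First I would run a diagram chase in that square. Any class $\alpha\in CH_{OP}^p(X)_\Q$ maps to some $\tilde\alpha\in CH^p(\tilde X_0)_\Q$ with $(p_0^*-p_1^*)\tilde\alpha=0$; hence its cycle class $[\tilde\alpha]\in C^p(\tilde X_0)$ has vanishing image in $C^p(\tilde X_1)$, and by exactness it comes from a unique element of $C^p(X)$. Injectivity of $CH^p(\tilde X_0)_\Q\to C^p(\tilde X_0)$ is not needed; uniqueness of the lift follows from injectivity of $C^p(X)\hookrightarrow C^p(\tilde X_0)$. Independence of the choice of $\tilde X_0$ and $\tilde X_1$ is automatic: given another choice, dominate both by a common resolution as in the proof of Lemma \ref{lemma:W}, and use functoriality of the cycle class map together with the analogous functoriality of $C^p$ built into its definition.

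Next I would check that this map is multiplicative. The horizontal arrows $CH_{OP}^*(X)_\Q\hookrightarrow CH^*(\tilde X_0)_\Q$ and $C^*(X)\hookrightarrow C^*(\tilde X_0)$ are both ring homomorphisms (the first by Kimura's theorem, the second because $C^*(X)$ is defined in Section 2 as a subring of $\tilde H^{2*}(X)\subset H^{2*}(\tilde X_0)$). The vertical cycle map on the smooth variety $\tilde X_0$ is a ring homomorphism. Since the inclusion $C^*(X)\hookrightarrow C^*(\tilde X_0)$ is injective, the induced map on kernels is forced to respect products as well as sums. Compatibility with the unit is similar.

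Finally, when $X$ is itself smooth, one can take $\tilde X_0=X$ with $\pi=\mathrm{id}$, in which case $CH_{OP}^p(X)=CH^p(X)$ and $C^p(X)$ is the usual space of algebraic cohomology classes; the constructed map is then visibly the ordinary cycle class map. The only mildly delicate point in the whole argument is verifying independence of choices together with naturality of the resulting map in $X$, which is needed to make the statement well-posed; this reduces to a routine cofinality argument among resolutions of $X$ and of the fibre squares, using Lemma \ref{lemma:W} and the construction of compatible diagrams of alterations used already in the proof of the functoriality of $C^p(-)$.
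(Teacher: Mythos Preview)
Your proof is correct and follows essentially the same approach as the paper: the paper's proof consists solely of displaying the commutative diagram with the two exact rows from Corollaries \ref{cor:kim1} and \ref{cor:Cseq} linked by the cycle maps on $\tilde X_0$ and $\tilde X_1$, leaving the diagram chase and the verification of multiplicativity and independence of choices implicit. Your write-up supplies exactly these details (one minor slip: $C^*(X)$ is defined as a subring of $\tilde H^{2*}(X)$ in Section~1, not Section~2).
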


\begin{proof}
This follows from the diagram
 $$
\xymatrix{
 0\ar[r] & CH_{OP}^p(X)_\Q\ar[r]\ar@{-->}[d] & CH^p(\tilde X)_\Q\ar[r]\ar[d] & CH^p(\tilde X_1)_\Q\ar[d] \\ 
 0\ar[r] &  C^{p}(X)\ar[r] & C^{p}(\tilde X)\ar[r] & C^{p}(\tilde X_1)
}
$$
\end{proof}

We can see from the previous results that $\varepsilon_1(\alpha)=0$,
when $\alpha$ comes from $CH_{OP}^*(X)$. However, $\varepsilon(\alpha)$
need not be zero. To see this, we can use  the class $\alpha=D$ of
example \ref{ex:2}. Applying corollary
\ref{cor:kim1} with $\tilde X_1= \tilde X\coprod \PP^1\times \PP^1$
(see remark \ref{rmk:1}),
shows that $D$ lies in the image of $CH_{OP}^1(X)$.

The problem with the operational Chow group is that it is too
permissive. We need to restrict the classes so as to kill the
higher obstructions.
If $\alpha\in CH^p_{OP}(X)$, then in the above notation,
it corresponds to a cycle $\alpha_0$ on $\tilde X_0$ such that
the difference of the pullbacks $\partial\alpha_0=0$ in $CH^p(\tilde
X_1)$. This means that $\partial\alpha_0$ is the boundary of a higher
cycle $\alpha_1$ in the sense of Bloch (recalled below). If we insist
that $\alpha_1$ can be chosen so that $\partial \alpha_1$ is a
boundary in the Bloch complex of $\tilde X_2$, we get an additional constraint on $\alpha$
which is sufficient to prove $\varepsilon_2(\alpha)=0$. Continuing in
this way eliminates all the obstructions.  The precise statement is theorem \ref{thm:obstrvan}.
But first we need to recall basic facts about motivic cohomology.

\section{Motivic cohomology}\label{section:mot}

We start by recalling Bloch's complex  \cite{bloch}. 
 Let 
 $$\Delta^m = \Spec \C[x_0,\ldots, x_m]/(\sum x_i-1)\cong \A^m$$
be the algebraic geometer's simplex. Setting some the variables to $0$ defines the faces, which
can be labelled in the usual way.
Given a variety $Y$, let $Z_s^p(Y, -n)$ denote the space of codimension $p$ cycles on $Y\times \Delta^n$
meeting the faces properly. The coboundary 
\begin{equation}
  \label{eq:delta}
\delta \alpha = \sum (-1)^i \alpha\cap (i\text{th face})  
\end{equation}
turns this into a complex, which is Bloch's complex. The homology of this gives the higher Chow groups
$$ CH^p(Y, n) = H^{-n}Z^p(Y, \dt)$$
When $n=0$, this coincides with the usual Chow group. 
We also recall the cubical versions of this, referring to Levine \cite[\S 4]{levine} for details. 
Let $\square =
(\PP^1-\{1\})^n$ with coordinates $z_i$. Setting $z_i=0 $ or $\infty$
give the faces $\iota_{i,0},\iota_{i,1}:\square^{n-1}\to \square^n$.
Given a smooth projective variety $Y$, let  $Z_c^p(Y, -n)$ be the
 the quotient of the space of codimension $p$ algebraic cycles on $X\times \square^n$ meeting
  intersections of faces properly by the subspace of degenerate cycles.
This becomes a complex with differential
$$\delta = \sum (-1)^{i+j} \iota_{i,j}^*:Z_c^p(Y,-n)\to  Z_c^p(Y,-n+1)$$
This complex is quasi isomorphic to $Z^p_s(X, \dt)$.
When working rationally with  $Z_c^p(Y, -n)\otimes \Q$, we may also use the subcomplex of alternating cycles.
This eliminates the need to divide by degenerate cycles.

Given a finite collection of subvarieties $W_i\subset Y$,
one can form a subcomplex 
$$Z_s(Y,\dt)_{\{W_i\}}\subseteq Z_s(Y,\dt),$$
of cycles meeting the $W_i$ properly (and likewise for the cubic complexes).
Following  Hanamura, we  call these distinguished subcomplexes. The following
hold.
\begin{itemize}
\item The above inclusions are all quasiisomorphisms.
\item The intersection of two distinguished subcomplexes is
  distinguished. 
\item Given a morphism $f:Z\to Y$ between smooth
varieties, and a distinguished subcomplex $Z_s^p(Z,\dt)'$, there is
a distinguished subcomplex  $Z_s^p(Y,\dt)'$ such that pull back of
cycles gives a map
$f^*Z_s^p(Y,\dt)'\to Z_s^p(Z, \dt)'$ of complexes.
\end{itemize}
These properties ensure that $CH^p(-, n)$ is a covariant functor on 
the category of smooth varieties.

An alternative approach to  the higher Chow groups is to identify them with the 
cohomology of a complex of sheaves following
 Friedlander, Suslin and Voevodsky  \cite{fv, sv}.
Given a scheme $X$, we  recall  two relatively new
Grothendieck topologies. The first is the Nisnevich topology where the
covers are \'etale covers $U_i\to X$ such that for every possibly
nonclosed $x\in X$,
there is a $u$ in some $U_i$ lying over it with the same  residue
field $k(u)=k(x)$. For the cdh topology we also allow covers of the form
$$\tilde X \coprod Z\to X$$
where these form  a blow up square \eqref{eq:rs} but we allow $\tilde X$ to be singular.

Given schemes $U, V$, let $z_{qf}(V)(U)$ be the group of
 correspondences which are quasifinite over $U$; more precisely, it is the abelian group generated by irreducible subvarieties
 $V\times U$ which are quasifinite over $U$.  The group
 $z_{qf}(V)(-)$ is contravariant under pull back of correspondences, so it
determines a presheaf on  $X_{cdh}$.
 For each integer $n\ge 0$,  define the complex of presheaves $\Z_{X}(n)=\Z(n)$ on $X_{cdh}$ which assigns to a cdh
 open $U$,
 $$   \ldots\to \underbrace{z_{qf}(\A^n\times\Delta^1)(U)}_{\text{ deg $2n-1$}}\to
 \underbrace{z_{qf}( \A^n)(U)}_{\text{ deg $2n$}}$$
 with  coboundary $\delta$ as in \eqref{eq:delta}. This is similar to Bloch's
 complex, and in fact we have inclusions
 \begin{equation}
   \label{eq:SFtoB}
   \Z(n)(X)\subset Z_s^n(X\times \A^n,\dt)[-2n]
 \end{equation}
\cite[lemma 19.4]{mvw}; moreover, the image lies in any
distinguished subcomplex. Let $\Z(n)_{cdh}$, respectively
$\Z(n)_{zar}$, denote the sheafification of $\Z(n)$ in the cdh, respectively
Zariski, topologies.

Motivic cohomology is defined as
$$H_M^i(X,\Z(j)) := H^i(X_{cdh}, \Z_X(j)_{cdh})$$
and
$$H_M^i(X,\Q(j)) := H_M^i(X, \Z(j))\otimes \Q$$
 We make a few comments about this definition.

\begin{enumerate}

\item Since $\Z(j)$ is not bounded below, some care needs to be taken
  in defining hypercohomology. Given a complex of sheaves $S^\dt$ on a
  site with an element  $U$, we take
$$H^i(U, S^\dt) = H^i(\Gamma(U,\mathcal{I}^\dt))$$
where $\mathcal{I}^\dt$ is a $K$-injective resolution of $S^\dt$
in the sense of \cite{ajs, spaltenstein}. When $S^\dt$ is bounded
below, this coincides with the usual definition using injective resolutions.

\item The complex $\Z(j)$, which is denoted by $\Z^{SF}(j)$ in \cite{mvw}, is more convenient for our purposes
than the definition in lecture 3 \cite{mvw}. The two complexes are
quasi-isomorphic \cite[thm 16.7]{mvw}.

\item  The definition of  motivic cohomology as above, using the  cdh
  topology is taken from  \cite[ def 4.3, def 9.2] {fv}. When $X$ is
  smooth, it is possible and more convenient to work in the Zariski
  topology in the sense that
$$H_M^i(X,\Z(j))\cong  H^i(X_{zar}, \Z_X(j)_{zar})$$
cf \cite[ thm 5.5]{fv}. In the smooth case, motivic cohomology can be identifed
with higher Chow groups after reindexing, cf \cite{mvw} or theorem \ref{thm:motivic}.

\item There are products
$$H^i_M(X,\Z(j))\otimes H^{i'}_M(X,\Z(j'))\to H^{i+i'}_M(X, \Z(j+j'))$$
which agree with the natural products on higher Chow groups when $X$ is smooth,
cf  \cite[p 24]{mvw}, \cite{weibel}.

\end{enumerate}

Since we use the cdh topology, we get the following Mayer-Vietoris
sequence.
\begin{prop}\label{prop:cdh}
  Given the blow up square \eqref{eq:rs}, 
$$\ldots \to H_M^i(X,\Z(j)) \to H_M^i(\tilde X,\Z(j))\oplus
H_M^i(Z,\Z(j))\to H^i_M(E,\Z(j))\to\ldots $$
is exact.
\end{prop}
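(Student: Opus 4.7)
The plan is to derive the Mayer--Vietoris sequence as the long exact sequence of hypercohomology associated to a short exact sequence of representable cdh sheaves. The key input is the definition of the cdh topology itself: a blow-up square as in \eqref{eq:rs} is by design distinguished, which translates into the assertion that $\tilde X \coprod Z \to X$ is a cdh cover whose Cech nerve collapses so as to yield a short exact sequence
$$0 \to \Z_{cdh}[E] \to \Z_{cdh}[\tilde X] \oplus \Z_{cdh}[Z] \to \Z_{cdh}[X] \to 0$$
of free abelian cdh sheaves, where $\Z_{cdh}[Y]$ denotes the cdh-sheafification of the presheaf represented by $Y$. Geometrically, this is just the statement that on a cdh sheaf $\mathcal{F}$, a blow-up square becomes an equalizer diagram.

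Granting this, I would apply the derived functor $\mathrm{RHom}(-, \Z(j)_{cdh})$ to obtain a distinguished triangle in the derived category of abelian groups. By Yoneda, $\mathrm{Ext}^i(\Z_{cdh}[Y], \Z(j)_{cdh}) \cong H^i(Y_{cdh}, \Z(j)_{cdh}) = H^i_M(Y,\Z(j))$ for each $Y \in \{X, \tilde X, Z, E\}$, so the associated long exact sequence of $\mathrm{Ext}$ groups is exactly the claimed Mayer--Vietoris sequence.

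One technical wrinkle is that $\Z(j)$ is not bounded below, so ordinary injective resolutions do not quite suffice to define its hypercohomology. This is handled by the $K$-injective resolution formalism of \cite{ajs, spaltenstein} invoked in comment (1) above, which guarantees that $\mathrm{RHom}(-, \Z(j)_{cdh})$ is well-defined and carries short exact sequences of sheaves to distinguished triangles in the usual way. I expect no further obstacle: the actual geometric content, cdh descent for blow-up squares, is built into the topology, and the homological argument from there is formal. If one prefers, the result can also be cited directly from \cite{fv} or \cite{mvw}, where this Mayer--Vietoris property is standard.
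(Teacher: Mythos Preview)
Your proposal is correct and aligns with the paper's own proof, which simply cites \cite[prop 4.3.3]{sv}. You have essentially unpacked the content of that reference: the short exact sequence of representable cdh sheaves you write down is exactly what that proposition provides (the cdh sheaf condition for an abstract blow-up square is stronger than the bare cover axiom and forces the square to be cocartesian in cdh sheaves), and from there your derived-functor argument is the standard route to the long exact sequence. Your closing remark that one could cite the result directly is in fact precisely what the paper does.
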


\begin{proof}
  This follows from \cite[ prop 4.3.3]{sv}.
\end{proof}

\section{Motivic cohomology via simplicial resolutions}

The definition of motivic cohomology given in the previous section is not terribly convenient for our purposes. 
Instead we will use the approach due to Hanamura \cite{hanamura}, using simplicial resolutions. 

To begin with, we need the existence of finite resolutions.

\begin{prop}[{\cite[chap 1, thm 2.6] {gnpp}}]\label{prop:gnpp}
  Given an $n$ dimensional quasiprojective variety
$X$, we can choose a smooth (semi-) simplicial scheme  with a
projective augementation $\tilde
X_\dt\to X$ satisfying cohomological descent, such that
 $\dim \tilde X_i\le n-i$ and in particular $\tilde X_i=\emptyset$ for $i>n$. 
\end{prop}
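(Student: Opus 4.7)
Proof proposal: I would prove this by induction on $n=\dim X$, following the construction of cubical/simplicial hyperresolutions due to Guillén–Navarro Aznar et al. The base case $n=0$ is trivial: a zero-dimensional variety $X$ is already smooth, so we take $\tilde X_0=X$ (with a projective resolution of any non-reduced structure — though here we assumed reduced) and $\tilde X_i=\emptyset$ for $i>0$.

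For the inductive step, I would invoke Hironaka to choose a resolution $\pi\colon\tilde X\to X$ which is an isomorphism over the smooth locus; by compactifying if necessary (and keeping track of the discriminant locus), I may assume $\tilde X$ is smooth projective. Let $Z\subsetneq X$ be the discriminant (a closed subvariety containing the singular locus) with $\dim Z\le n-1$, and let $E=\pi^{-1}(Z)$, also of dimension $\le n-1$. This yields a ``blow-up'' (or discriminant) square
\[
\xymatrix{
E\ar[r]\ar[d] & \tilde X\ar[d]^{\pi}\\
Z\ar[r] & X
}
\]
which is known to be of cohomological descent (this is the key input from gnpp).

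Now I would apply the inductive hypothesis simultaneously to $Z$ and to $E$, producing smooth semi-simplicial schemes $\tilde Z_\dt\to Z$ and $\tilde E_\dt\to E$ of cohomological descent with $\dim\tilde Z_i,\dim\tilde E_i\le (n-1)-i$. By functoriality of the construction (refining one resolution so that the map $E\to Z$ lifts to a semi-simplicial map $\tilde E_\dt\to\tilde Z_\dt$), I may assume these are compatible. The desired $\tilde X_\dt\to X$ is then produced by the simplicial mapping cone of the augmented square: explicitly,
\[
\tilde X_0=\tilde X\sqcup\tilde Z_0,\qquad \tilde X_i=\tilde E_{i-1}\sqcup\tilde Z_i\quad(i\ge 1),
\]
with face maps assembled from $\pi$, the inclusions $E\hookrightarrow\tilde X$, $Z\hookrightarrow X$, and the face maps of $\tilde Z_\dt$ and $\tilde E_\dt$ (the sign conventions of the mapping cone ensure $\partial^2=0$). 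The dimension bound $\dim\tilde X_i\le n-i$ is then immediate from the inductive bounds applied to $\tilde Z_i$ and $\tilde E_{i-1}$ (both $\le n-1-(i-1)=n-i$ in the worst case), and in particular $\tilde X_i=\emptyset$ for $i>n$. Cohomological descent for $\tilde X_\dt\to X$ follows by combining descent for the blow-up square with descent for the two inductive resolutions, using the standard spectral sequence comparison (or equivalently, the fact that the mapping cone of descent morphisms is again a descent morphism).

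The main obstacle is the bookkeeping of the inductive step: one must ensure that the resolutions of $Z$ and $E$ can be chosen compatibly (so that $\tilde E_\dt\to\tilde Z_\dt$ exists as a semi-simplicial map without forcing an increase in dimension), and one must verify that the mapping-cone construction preserves cohomological descent. Using semi-simplicial rather than simplicial objects is essential here, since degeneracy maps would re-introduce copies of higher-dimensional pieces and destroy the bound $\dim\tilde X_i\le n-i$. Once the compatibility is arranged, both the dimension bound and descent fall out by straightforward induction, which is precisely what is carried out in gnpp.
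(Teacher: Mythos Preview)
Your proposal is correct and follows essentially the same approach as the paper: induct on $\dim X$, form the blow-up square from a resolution $\pi:\tilde X\to X$ with discriminant $Z$ and $E=\pi^{-1}(Z)$, apply the inductive hypothesis to $Z$ and $E$, and glue via the mapping-cone formula $\tilde X_0=\tilde X\sqcup\tilde Z_0$, $\tilde X_i=\tilde E_{i-1}\sqcup\tilde Z_i$ for $i\ge 1$. One small correction: ``projective augmentation'' refers to the morphism $\tilde X_\dt\to X$ being projective, not to $\tilde X$ itself being projective (which it need not be since $X$ is only quasiprojective), so you should not compactify.
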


It will be useful to recall the basic idea of the construction of 
$\tilde X_\dt$, since it gives slightly more information than what is
stated above.
We will refer to any simplicial scheme constructed by
  this method as a {\em GNPP resolution} of $X$.

\begin{proof}  We
use  the  simplicial rather than cubical viewpoint of the original source.
As a first step, choose
a resolution of singularities $\pi:\tilde X\to X$ and a proper closed set $Z\subset X$ such
that $\pi$ is an isomorphism over $X-Z$.  Consider the diagram
\begin{equation}
  \label{eq:rs}
\xymatrix{
 E=f^{-1}Z\ar[r]\ar[d] & \tilde X\ar[d] \\ 
 Z\ar[r] & X
}
\end{equation}
which we refer to as a {\em blow up square}.
If $Z$ and $E$ are
both nonsingular, then we simply take  $\tilde X_0 = \tilde X\coprod
Z$, $\tilde X_1 = E$ and $\tilde X_2,\ldots=\emptyset$. This has an  obvious augmentation to $X$. In general, it is used as the foundation
for a more elaborate simplicial object  constructed inductively by
gluing appropriate GNPP resolutions of $Z$ and $E$.
So by construction, a GNPP resolution of $X$ can be decomposed 
as a disjoint union
$$\ldots \tilde X_1= \tilde E_0\coprod \tilde Z_1 \rightrightarrows \tilde X_0= \tilde X\coprod \tilde
Z_0 \to X$$
such that $\tilde Z_\dt$ (with solid arrows on the bottom of \eqref{eq:gnpp}) is a GNPP resolution of $Z$ and $\tilde E_\dt$ 
(with solid arrows on the top) is a GNPP resolution of $E$.
\begin{equation}
  \label{eq:gnpp}
\xymatrix{ 
\ldots & \tilde E_1\ar[r]\ar@<1ex>[r]\ar@{-->}[rd] & \tilde E_0\ar@{-->}[r]\ar@{-->}[rd] & \tilde X\ar@{-->}[rd] \\
\ldots & \tilde Z_2\ar[r]\ar@<1ex>[r]\ar@<-1ex>[r] & \tilde Z_1\ar[r]\ar@<1ex>[r] & \tilde Z_0\ar@{-->}[r] &X
}
\end{equation}
Furthermore, the rightmost parallelogram should map
to \eqref{eq:rs}. 
\end{proof}

  Given $X$, choose a  GNPP resolution $\tilde X_\dt$ as above. Next,  choose distinguished sub complexes $Z_s^p(\tilde X_\dt, \dt)'$
stable under any composition of  face maps. We let $\cX$ denote both sets of choices.
Then we can form a double complex
$(Z_s^p(\tilde X_\dt, \dt)', \delta, \pm \partial)$ and define
$$CH_H^p(X, n; \cX)= H^{-n}(Tot(Z_s^p(\tilde X_\dt, \dt)')$$
Hanamura \cite{hanamura} proves that, after tensoring with $\Q$, this  is independent of the choice of 
$\cX$. The next theorem will give a different proof  of this fact, which works integrally.

\begin{thm}\label{thm:motivic}
For any quasiprojective variety, there is an isomorphism
  $$H_M^m(X,\Z(n))\cong CH_H^n(X,2n-m; \cX)$$
  This is natural in the sense that given $Y\to X$ and a morphism of GNPP resolutions fitting into a commutative
  diagram
  $$
  \xymatrix{ \tilde Y_\dt\ar[r]\ar[d] & \tilde Y_\dt\ar[d]\\
   Y\ar[r] & X
   }
  $$
  there exists a commutative diagram
 $$\xymatrix{H_M^m(X,\Z(n))\ar[d]\ar^{\sim}[r] &CH_H^n(X,2n-m; \cX)\ar[d]\\
  H_M^m(Y,\Z(n))\ar^{\sim}[r] & CH_H^n(Y,2n-m; \mathcal{Y})
  }
  $$
  for some appropriate $\mathcal{Y}$.
\end{thm}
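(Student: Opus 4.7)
The plan is to use the GNPP resolution $\tilde X_\dt \to X$ to realize both sides of the isomorphism as totalizations of the same cosimplicial object built from the smooth levels of $\tilde X_\dt$, and then to invoke the known smooth-case comparison term-by-term.

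First, I would recall from \cite[thm 5.5]{fv}, combined with the quasi-isomorphism \eqref{eq:SFtoB} and the fact that passage to any distinguished subcomplex is a quasi-isomorphism, that for any smooth quasiprojective $Y$ there is a natural equivalence
\[
R\Gamma_M(Y, \Z(n)) \simeq Z_s^n(Y, \dt)'[2n].
\]
This handles the case in which $X$ is smooth (take $\tilde X_0 = X$ and the higher levels empty), and more usefully it identifies the motivic cohomology of each smooth level $\tilde X_p$ with the $p$-th column of the Hanamura double complex.

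Next, on the motivic side I would establish, by induction on $\dim X$ using the inductive structure of the GNPP construction recorded in \eqref{eq:gnpp}, a descent equivalence
\[
R\Gamma_M(X, \Z(n)) \xrightarrow{\sim} \mathrm{Tot}\bigl(\{R\Gamma_M(\tilde X_p, \Z(n))\}_{p\ge 0}\bigr).
\]
The inductive step uses the cdh Mayer--Vietoris sequence of Proposition \ref{prop:cdh} applied to the blow-up square \eqref{eq:rs}: granting the descent equivalence for the lower-dimensional $E$ and $Z$, one splices the totalizations coming from the sub-resolutions $\tilde E_\dt \to E$ and $\tilde Z_\dt \to Z$ with the smooth term $R\Gamma_M(\tilde X, \Z(n))$ according to the exact triangle of Proposition \ref{prop:cdh}, and identifies the result with the totalization associated to $\tilde X_\dt \to X$.

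Combining the two steps yields the equivalence of $R\Gamma_M(X, \Z(n))$ with the total complex of the Hanamura double complex (shifted by $[2n]$), whence
\[
H^m_M(X, \Z(n)) \cong H^{m-2n}\bigl(\mathrm{Tot}\, Z_s^n(\tilde X_\dt, \dt)'\bigr) = CH^n_H(X, 2n-m; \cX).
\]
Independence of $\cX$ and naturality with respect to a morphism $\tilde Y_\dt \to \tilde X_\dt$ covering $Y \to X$ then follow formally, since the left-hand side is intrinsically defined and functorial, and the comparison map is induced by a morphism of cosimplicial resolutions. The main obstacle is the inductive descent step: one must match the simplicial combinatorics of the GNPP construction with the three-term cdh-cofibre prescription of Proposition \ref{prop:cdh}, verifying that the $p$-th level $\tilde X_p = \tilde E_{p-1} \coprod \tilde Z_p$ (for $p \ge 1$) and its face maps sit in the totalization precisely as the iterated cone construction dictates. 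This is essentially bookkeeping, but it is exactly where the specific recursive shape of a GNPP resolution, as opposed to a general cohomological-descent resolution, is needed.
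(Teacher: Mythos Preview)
Your proposal is correct and follows essentially the same route as the paper: both reduce to the smooth-case comparison and then establish cohomological descent for motivic cohomology along the GNPP resolution by induction on its length, using the cdh Mayer--Vietoris sequence of Proposition~\ref{prop:cdh} applied to the blow-up square \eqref{eq:rs} at the inductive step. The paper's presentation is only cosmetically different---it passes through the simplicial cdh topos $Sh(\tilde X_{cdh,\dt})$ and an explicit $K$-injective resolution, and isolates a separate homotopy-invariance lemma (Lemma~\ref{lemma:homotop}) to handle the $\A^n$ factor arising from \eqref{eq:SFtoB}---but the architecture of the argument is the same as yours.
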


The proof of the theorem will be broken into a series of lemmas.  
We start with  the following.

\begin{lemma}\label{lemma:homotop}
  For any $i$,  there is a natural choice $\cX'$ for which we have a canonical 
  isomorphism $CH_H^n(X, 2n-m;\cX) \cong CH_H^n(X\times
  \A^i,2n-m;\cX')$.
  \end{lemma}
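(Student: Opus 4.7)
The plan is to exploit homotopy invariance of Bloch's higher Chow groups row by row in the double complex, after checking that the product simplicial scheme gives a legitimate input for $CH_H^n$.

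First I would set up the candidate on the right-hand side. Starting from the GNPP resolution $\tilde X_\dt \to X$ underlying $\cX$, I would take $\tilde X'_\dt := \tilde X_\dt \times \A^i \to X \times \A^i$ as the simplicial augmentation; each $\tilde X_j \times \A^i$ is smooth (being a product of a smooth variety with an affine space), and cohomological descent is preserved under the flat product with $\A^i$. (Strictly speaking $\tilde X'_\dt$ need not have been produced by the GNPP recipe, but all Hanamura's arguments only use the smoothness and descent properties of the simplicial resolution, so we may take this as the source of our ``GNPP-type'' data; this is the point to record.) The projections $\pi_j : \tilde X_j \times \A^i \to \tilde X_j$ are flat, so pullback of cycles gives morphisms of Bloch complexes that commute with the face maps in the simplicial direction. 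Using the third bullet on distinguished subcomplexes recalled in the excerpt, I choose distinguished subcomplexes $Z_s^n(\tilde X_j \times \A^i,\dt)'$ containing the images of $\pi_j^{*}Z_s^n(\tilde X_j,\dt)'$ and stable under faces; this defines $\cX'$, and simultaneously produces a morphism of double complexes
\[
\pi^{*}:\bigl(Z_s^n(\tilde X_\dt,\dt)',\delta,\pm\partial\bigr)\longrightarrow \bigl(Z_s^n(\tilde X_\dt\times\A^i,\dt)',\delta,\pm\partial\bigr).
\]

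Next I would prove the map is a quasi-isomorphism on each ``row'' of fixed simplicial degree. For a smooth variety $Y$, homotopy invariance of higher Chow groups gives that flat pullback $Z_s^n(Y,\dt)\to Z_s^n(Y\times\A^i,\dt)$ is a quasi-isomorphism; by the first bullet on distinguished subcomplexes (inclusions of distinguished subcomplexes are quasi-isomorphisms), the same holds after passing to any distinguished subcomplex. Applying this with $Y=\tilde X_j$ for each $j$ gives that every row of $\pi^{*}$ is a quasi-isomorphism.

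Finally, a standard spectral sequence comparison finishes the job: filtering both total complexes by simplicial degree produces a first-quadrant-type spectral sequence (convergent because the GNPP resolution is finite, so only finitely many $\tilde X_j$ are nonempty), and $\pi^{*}$ induces a map of these spectral sequences that is an isomorphism on $E_1$ by the previous step, hence on the abutments, yielding the claimed isomorphism $CH_H^n(X,2n-m;\cX)\cong CH_H^n(X\times\A^i,2n-m;\cX')$. I expect the main obstacle to be purely bookkeeping: arranging the distinguished subcomplexes on $\tilde X_\dt\times\A^i$ so that they simultaneously (a) contain the pulled-back cycles, (b) are stable under the simplicial face maps, and (c) remain large enough that the row-wise homotopy invariance quasi-isomorphism still applies; this is exactly what the three listed properties of distinguished subcomplexes were set up to provide, so the argument should go through cleanly.
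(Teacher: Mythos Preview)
Your proposal is correct and follows essentially the same approach as the paper: take $\tilde X_\dt\times\A^i\to X\times\A^i$ as the simplicial resolution, pull back the distinguished subcomplexes to define $\cX'$, obtain a map of double complexes, and compare spectral sequences using Bloch's homotopy invariance $CH^p(\tilde X_a,-b)\cong CH^p(\tilde X_a\times\A^i,-b)$ on the $E_1$-page. The only cosmetic difference is that the paper simply asserts $\tilde X_\dt\times\A^i$ is a GNPP resolution (which it is, since the blow-up-square structure is stable under $-\times\A^i$), whereas you hedge and note that the required properties hold regardless; either way the argument goes through.
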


\begin{proof}
The simplicial scheme $\tilde X_\dt\times \A^i\to X\times \A^i$ is a
GNPP resolution mapping to $\tilde X_\dt$. We pullback the distinguished complexes to $\tilde X_\dt\times \A^i$.
These choices will be denoted by $\cX'$.
We have  a map of double complexes, and hence  a morphism of spectral sequences
$$
\begin{array}{ccc}
 CH^p(\tilde X_a, -b) & \Rightarrow & CH_H^p(X, a-b; \cX) \\ 
 \downarrow &  & \downarrow \\ 
 CH^p(\tilde X_a\times \A^i, -b) & \Rightarrow & CH_H^p(X\times \A^i, a-b; \cX')
\end{array}
$$
By \cite[thm 2.1]{bloch}, the vertical maps on the left are isomorphisms,
therefore the vertical map on the right is also an isomorphism.
\end{proof}

 We can form the category, in fact topos,  $Sh(\tilde X_{cdh,\dt})$  where an object consists of a collection of cdh sheaves $\F_i$ on $\tilde
  X_i$ and face maps $p_j^*\F_i\to \F_{i+1}$. Morphisms $\F_\dt\to
  \F_\dt'$  are
  collections of morphisms of sheaves compatible with the face maps.
The complex $\Z_{\tilde X_\dt}(n)_{cdh}$ 
can be regarded as a complex in this category. Choose a $K$-injective resolution $\mathcal{I}^{\dt\dt}$ of 
$\Z_{X_\dt}(n)_{cdh}$,  where the second index  on  $\mathcal{I}^{\dt\dt}$ is the simplicial
  index. Let $I^{ab} = H^0(\tilde X_b,\mathcal{I}^{ab})$. We have
  quasiisomorphisms  of complexes
$$Z_s^n(\tilde X_j\times \A^n,\dt)'[-2n]\leftarrow \Z(n)(\tilde X_j)\to H^0(\tilde X_j,
\Z_{\tilde X_j}(n)_{cdh})\subset I^{\dt j}$$
by \cite[thm 19.1]{mvw} (and its proof). These quasiisomorphisms are
compatible with the coboundary operator $\delta$. Thus we have a
quasiisomorphism of the total complexes. Consequently, we have
$$H^m(Tot( I^{\dt\dt}))\cong H^{m-2n}(Z_s^n(\tilde X_\dt\times \A^n,\dt)')\cong CH_H^n(X,2n-m;\cX)$$
So to finish the proof of theorem \ref{thm:motivic}, we need.

\begin{lemma}
  $H^m(Tot( I^{\dt\dt}))\cong H^m(X_{cdh},\Z(n)_{cdh})$.
\end{lemma}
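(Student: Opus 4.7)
The plan is to reinterpret $Tot(I^{\bullet\bullet})$ as hypercohomology on the simplicial topos $Sh(\tilde X_{cdh,\bullet})$ and then invoke cdh-cohomological descent for the augmentation $\epsilon: \tilde X_\bullet \to X$.

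First, since $\mathcal{I}^{\bullet\bullet}$ is by hypothesis a $K$-injective resolution of $\Z_{\tilde X_\bullet}(n)_{cdh}$ in $Sh(\tilde X_{cdh,\bullet})$, each column $I^{\bullet b}$ computes $H^*(\tilde X_b, \Z(n)_{cdh})$, and filtering $Tot(I^{\bullet\bullet})$ by columns yields a spectral sequence
$$E_1^{pq} = H^q(\tilde X_p, \Z(n)_{cdh}) \Rightarrow H^{p+q}(Tot(I^{\bullet\bullet})).$$
So it is enough to show that this sequence abuts to $H^{p+q}(X_{cdh}, \Z(n)_{cdh})$, i.e., to prove cdh-cohomological descent for the augmentation $\epsilon:\tilde X_\bullet \to X$ with coefficients in $\Z(n)_{cdh}$.

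Second, I would establish descent by induction on the dimension of $X$, following the GNPP construction. In the simplest case of a blow-up square \eqref{eq:rs} with smooth $Z$ and $E$, one takes $\tilde X_0 = \tilde X \sqcup Z$, $\tilde X_1 = E$ and $\tilde X_i = \emptyset$ for $i \geq 2$; the descent spectral sequence then degenerates to the Mayer--Vietoris long exact sequence of proposition \ref{prop:cdh}, which is exactly the required statement. For a general $X$, recall from the proof of proposition \ref{prop:gnpp} and diagram \eqref{eq:gnpp} that the GNPP resolution $\tilde X_\bullet$ decomposes as a gluing of GNPP resolutions $\tilde Z_\bullet$ of $Z$ and $\tilde E_\bullet$ of $E$ along the blow-up square \eqref{eq:rs}. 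Applying the induction hypothesis to the lower-dimensional pieces $Z$ and $E$ and combining with proposition \ref{prop:cdh} for the underlying square, the three resulting exact sequences splice together to give the desired descent statement for $\tilde X_\bullet \to X$.

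The main subtlety I anticipate is that $\Z(n)_{cdh}$ is unbounded below, so classical cohomological descent, usually stated for bounded-below complexes, does not apply verbatim. The $K$-injective formalism of \cite{ajs, spaltenstein} is tailored for this situation: derived functors applied to a $K$-injective object are computed termwise, and one can perform the descent argument directly at the level of $K$-injective resolutions on the simplicial topos. Alternatively, one can apply bounded-below descent to each truncation $\tau^{\geq -N}\Z(n)_{cdh}$ and pass to the limit, noting that in any fixed bidegree the relevant groups stabilize once $N$ is taken sufficiently large, so that $\lim^1$ terms vanish. This foundational bookkeeping is where I expect the bulk of the work to lie.
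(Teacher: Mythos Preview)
Your proposal is correct and follows essentially the same route as the paper: identify $H^m(Tot(I^{\bullet\bullet}))$ with hypercohomology in the simplicial topos $Sh(\tilde X_{cdh,\bullet})$, then prove the comparison map $\gamma^m:H^m(X_{cdh},\Z(n)_{cdh})\to H^m(\tilde X_{cdh,\bullet},\Z(n)_{cdh})$ is an isomorphism by induction along the GNPP structure, using proposition~\ref{prop:cdh} and the decomposition \eqref{eq:gnpp}. The only cosmetic differences are that the paper inducts on the \emph{length} of $\tilde X_\bullet$ rather than $\dim X$ (equivalent here, since $\dim\tilde X_i\le n-i$), and it makes the final step explicit as a $5$-lemma applied to a commutative ladder of long exact sequences rather than your phrase ``splice together''; your discussion of the unbounded-below issue is more careful than the paper's, which simply relies on the $K$-injective formalism set up earlier.
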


\begin{proof}
The first group  $H^m(Tot( I^{\dt\dt}))$ is nothing but the cohomology
of $\Z_{\tilde X_\dt}(n)_{cdh}$ in the topos $Sh(\tilde X_{cdh,\dt})$. We
have a morphism of topoi $Sh(\tilde X_{cdh,\dt})\to Sh(X_{cdh})$,
induced by $\pi_\dt$, which  induces a morphism of groups
$$\gamma^m:H^m(X_{cdh},\Z(n)_{cdh})\to H^m(\tilde X_{cdh, \dt}, \Z(n)_{cdh})$$
(More explicitly, choose a $K$-injective resolution $\mathcal{J}^\dt$
of $\Z_X(n)$, then
we can easily construct a map of complexes $H^0(X,\mathcal{J}^\dt)\to Tot(I^{\dt\dt})$
inducing the above map.) Will prove that this map is an isomorphism
by induction on the length of $\tilde X_\dt$. By
  length, we mean the smallest integer $d$ such that $\tilde
  X_{d+i}=\emptyset$ for all $i>0$. As in the proof of proposition
  \ref{prop:gnpp}, we can assume that $\tilde X_\dt$ has the structure
  given in \eqref{eq:gnpp}. Using this we get a commutative diagram
$$
\xymatrix{
 \ldots & H^{m-1}(E)\ar[r]\ar[d]^{\alpha^{m-1}} & H^m(X)\ar[r]\ar[d]^{\gamma^m} & H^m(\tilde X)\oplus H^m(Z)\ar[d]^{\beta^m} & \ldots \\ 
  & H^{m-1}(\tilde E_\dt)\ar[r] & H^m(\tilde X_\dt)\ar[r] & H^m(\tilde X)\oplus H^m(\tilde Z_\dt) & 
}
$$
where the spaces $E$ etc. are the same as in the proof of  proposition
  \ref{prop:gnpp}, and the coefficients are $\Z(n)_{cdh}$. By
  induction, the arrows $\alpha^\dt,\beta^\dt$ are
  isomorphisms. Therefore $\gamma^\dt$ is an isomorphism by the $5$-lemma.

\end{proof}

\begin{rmk}\label{rmk:ncd}
  When $X= X_1\cup X_2$ is a union of smooth varieties meeting
  transversally, the GNPP algorithm produces
$$ X_1\cap X_2\coprod X_1\cap X_2\rightrightarrows X_1\coprod X_2\coprod X_1\cap X_2\to X$$
In this case, it is more efficient to cancel one of the $X_1\cap X_2$
factors and use
$$ X_1\cap X_2\rightrightarrows X_1\coprod X_2\to X$$
It is not difficult to see that the resulting double complexes are quasiisomorphic.
More generally if $X=\cup X_i$ has global normal crossings, i.e. when the
  components and their intersections are smooth of expected dimensions, rather than using a
  GNPP resolution, we can  substitute the simpler
  simplicial resolution
$$\ldots \coprod X_i\cap X_j\rightrightarrows \coprod X_i\to X$$
in theorem~\ref{thm:motivic}. 
\end{rmk}

\section{Motivic classes are unobstructed }

Let $\pi:\tilde X\to X$ be a desingularization. Then we have an induced map
$$\pi^*: H_M^{2p}(X,\Z(p))\to H_M^{2p}(\tilde X, \Z(p))\cong CH^p(\tilde X)$$

\begin{prop}
The image  $\pi^*(H^{2p}_M(X,\Z(p)))\subseteq CH^*(\tilde X)$
depends only $X$ and  lies in $CH^p_{OP}(X)$.
\end{prop}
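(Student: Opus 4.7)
The plan is to prove both assertions — that the image is contained in $CH^p_{OP}(X)$ and that it is independent of the chosen resolution — by exploiting the functoriality of motivic cohomology established in Theorem \ref{thm:motivic}, the identification $H^{2p}_M(\tilde X,\Z(p))\cong CH^p(\tilde X)$ in the smooth case, and Kimura's description of $CH^p_{OP}(X)$ recorded in Corollary \ref{cor:kim1}.

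First I would verify the containment $\pi^*H^{2p}_M(X,\Z(p))\subseteq CH^p_{OP}(X)$. Choose a nonsingular alteration $\tilde X_1\to \tilde X\times_X \tilde X$ with its two projections $p_0,p_1:\tilde X_1\to \tilde X$, and set $q:=\pi\circ p_0=\pi\circ p_1$. By the naturality in Theorem~\ref{thm:motivic}, applied to $\pi$ and to each $p_i$, we have $p_0^*\circ \pi^*=q^*=p_1^*\circ \pi^*$ as homomorphisms $H^{2p}_M(X,\Z(p))\to H^{2p}_M(\tilde X_1,\Z(p))$. Since $\tilde X_1$ is smooth, its motivic cohomology is canonically $CH^p(\tilde X_1)$, and under this identification $p_i^*$ on motivic cohomology agrees with the ordinary Chow pullback. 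Consequently $(p_0^*-p_1^*)\pi^*\alpha=0$ in $CH^p(\tilde X_1)$ for every $\alpha\in H^{2p}_M(X,\Z(p))$, which by Corollary~\ref{cor:kim1} forces $\pi^*\alpha\in CH^p_{OP}(X)$.

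Next I would deduce independence from $\tilde X$. Given a second desingularization $\pi':\tilde X'\to X$, choose a smooth variety $\tilde X''$ dominating both — for instance, a resolution of a component of $\tilde X\times_X \tilde X'$ that surjects onto $X$ — with the induced maps $f:\tilde X''\to \tilde X$, $f':\tilde X''\to \tilde X'$ and $\pi'':=\pi\circ f=\pi'\circ f'$. Functoriality again gives $f^*\circ \pi^*=\pi''^*=f'^*\circ \pi'^*$ on motivic cohomology, and hence on the Chow groups once we pass to smooth targets. By the first step all three images lie in $CH^p_{OP}(X)$, and by Corollary~\ref{cor:kim1} the map $CH^p_{OP}(X)\hookrightarrow CH^p(\tilde X'')$ is injective, so the coincidence of the images in $CH^p(\tilde X'')$ means that the images in $CH^p_{OP}(X)$ computed via $\tilde X$ and via $\tilde X'$ are literally the same subgroup.

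The only step that requires any care is the reconciliation of the two kinds of pullback on a smooth variety — one via the motivic machinery of Section~\ref{section:mot}, the other the classical Chow pullback — but this is already built into the construction, since for smooth targets Theorem~\ref{thm:motivic} collapses the double complex to Bloch's complex. Granting that compatibility, both assertions of the proposition reduce to naturality plus Kimura's exact sequence, making the argument essentially formal.
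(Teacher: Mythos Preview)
Your proof is correct, and in fact cleaner for the containment in $CH^p_{OP}(X)$ than the paper's. The independence argument is essentially identical to the paper's (which just refers back to the proof of Lemma~\ref{lemma:W}). The difference lies in how the inclusion $\pi^*H^{2p}_M(X,\Z(p))\subseteq CH^p_{OP}(X)$ is established.

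The paper extends $\tilde X\to X$ to a full GNPP resolution $\tilde X_\dt$ as in \eqref{eq:gnpp} and uses the spectral sequence $E_1^{ab}=CH^p(\tilde X_a,-b)\Rightarrow H^{2p-a+b}_M(X,\Z(p))$ to identify $\im\pi^*$ with the image of the edge map into $E_1^{00}$, which sits inside $E_2^{00}=\ker[CH^p(\tilde X_0)\to CH^p(\tilde X_1)]$. But here $\tilde X_1=\tilde E_0\amalg \tilde Z_1$ is \emph{not} a resolution of $\tilde X\times_X\tilde X$, so an extra diagram chase (involving a resolution $Y$ of $\tilde E_0\times_{\tilde Z_0}\tilde E_0$) is needed to feed the vanishing of $\partial\alpha$ in $CH^p(\tilde X_1)$ into Kimura's sequence. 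You sidestep this by choosing $\tilde X_1$ from the start as a resolution of $\tilde X\times_X\tilde X$ and invoking only the functoriality of motivic cohomology in its cdh incarnation: since $\pi\circ p_0=\pi\circ p_1$ as morphisms to $X$, the equality $p_0^*\pi^*=p_1^*\pi^*$ is automatic, and Corollary~\ref{cor:kim1} applies immediately. What the paper's route buys is that the spectral sequence \eqref{eq:motss} is set up anyway for later use; what yours buys is a shorter argument that isolates exactly the input needed (contravariant functoriality plus Kimura). One small terminological point: Corollary~\ref{cor:kim1} is stated for a resolution of $\tilde X\times_X\tilde X$, so you should say ``resolution'' rather than ``nonsingular alteration'' there to match.
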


\begin{proof}
The proof that the image is well defined is similar to the proof of lemma \ref{lemma:W}. So we focus on
the last part. We extend $\tilde X\to X$ to a GNPP resolution $\tilde
X_\dt$ as in the proof of proposition~\ref{prop:gnpp}, using the same
notation as in that proof. So $\tilde X_\dt$ takes the form
$$\ldots \tilde X_1= \tilde E_0\coprod \tilde Z_1 \rightrightarrows \tilde X_0= \tilde X\coprod \tilde
Z_0 \to X$$
Let $Y$ be a desingularization of $\tilde E_0\times_{\tilde Z_0} \tilde E_0$.
We have a commutative diagram
$$
\xymatrix{
 Y\ar@/^/[r]^{p_1}\ar@/_/[r]^{p_2}\ar[rd]^{g} & \tilde E_0\ar[r]^{i}\ar[d]^{f} & \tilde X \\ 
  & \tilde Z_0 & 
}
$$
where the arrows are the obvious projections.
 
From the double complex $Z_s^p(\tilde X_\dt, \dt)'$, we get a fourth quadrant spectral sequence
\begin{equation}\label{eq:motss}
 E_1^{ab} = CH^p(\tilde X_a, -b)\cong H^{2p+a}_M(X_a,\Z(p)) \Rightarrow CH_H^p(X, a-b) \cong H^{2p-a+b}_M(X,\Z(p))
\end{equation}
This certainly does depend on the
choice of $\tilde X_\dt$. However, the image of edge map
$$H_M^{2p}(X,\Z(0))\to E_\infty^{00}\subseteq\ldots E_2^{00}\subseteq E_1^{00}= CH^*(X)$$
does not, because it is just $\im \pi^*$.
To complete the proof, we will show that
$$E_2^{00}=
\ker[CH^p(\tilde X_0)\to CH^p(\tilde X_1)]$$
lies in  $CH^p_{OP}(X)$. If $\alpha\in E_2^{00}$, then $i^*\alpha=0$ in
$CH^*(\tilde E_0)/\im CH^*(\tilde Z_0)$ or equivalently  that $i^*\alpha=
f^*\beta$ for some $\beta$. Therefore $(i\circ p_1)^*\alpha -(i\circ
p_2)^*\alpha = g^*\beta-g^*\beta=0$. Now applying 
corollary~\ref{cor:kim1} implies that $\alpha\in CH_{OP}^*(X)$ ($Y$
plays the role of $\tilde X_1'$ in  said corollary).
\end{proof}

Thus we get a cycle map given by the  composition
\begin{equation}\label{eq:cyclemap}
 H^{2p}_M(X,\Q(p))\stackrel{\pi^*}{\to} CH^p_{OP}(X)_\Q\to  \tilde H^{2p}(X,\Q(p))
\end{equation}
We will prove that if $\alpha\in H^{2p}_M(X,\Q(p))$, the obstruction $\varepsilon(\alpha)$ of the
  image of this  class in $\tilde H^{2p}(X)$ vanishes. Therefore the image of  $\alpha$ lifts to a Hodge
cycle in $H^{2p}(X)$, which by lemma \ref{lemma:HCinj} is unique. In fact, the statement we prove  is a bit more precise.

\begin{thm}\label{thm:obstrvan}
There is a homomorphism
$$H^{2p}_M(X,\Q(p))\to H^{2p}(X,\Q(p))$$
such that the composite with the projection to $\tilde H^{2p}(X,\Q)$ coincides with \eqref{eq:cyclemap}.
Furthermore the image of this map lands in the space of weight $2p$ Hodge cycles. 
 \end{thm}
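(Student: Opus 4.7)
The plan is to lift the cycle map \eqref{eq:cyclemap} through a Deligne cohomology of the simplicial resolution, using the Hanamura description from theorem~\ref{thm:motivic} together with the explicit current-valued cycle class formulas of Kerr, Lewis and M\"uller-Stach \cite{klm}.

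Fix a GNPP resolution $\tilde X_\dt\to X$ as in proposition~\ref{prop:gnpp}. By theorem~\ref{thm:motivic} a class $\alpha\in H^{2p}_M(X,\Q(p))$ is represented by a cocycle $(\alpha_0,\alpha_1,\ldots)$ in the Hanamura total complex, with each $\alpha_a$ a codimension $p$ higher Chow cycle on $\tilde X_a$ of degree $-a$, and $\delta\alpha_a\pm\partial\alpha_{a-1}=0$. On each smooth projective $\tilde X_a$, KLM assigns to $\alpha_a$ a triple $(\Omega_a,T_a,R_a)$: a closed $C^\infty$ form $\Omega_a\in F^p\E^{2p-a}(\tilde X_a)$ (the Hodge piece), an integration current $T_a$ representing a rational cohomology class (the Betti piece), and a locally $L^1$ current $R_a$ satisfying $dR_a=\Omega_a-T_a$ in the current sense (the transgression).

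The next step is to assemble these triples, as the simplicial index $a$ varies, into a single cocycle in the total complex computing the Deligne cohomology $H^{2p}_{\D}(X,\Q(p))$ of the simplicial variety $\tilde X_\dt$; this is the hypercohomology of the cone between $F^p\E^\dt(\tilde X_\dt)$ and the rational singular cochain complex, where both factors are themselves total complexes over the simplicial index. The key compatibility to check is that the KLM assignment is natural under flat pullback along the face maps $p_i$, so that the triple $(\Omega,T,R)$ intertwines the Bloch coboundary $\delta$ with the simplicial coboundary $\partial$. Granted this, the cocycle relation $\delta\alpha_a\pm\partial\alpha_{a-1}=0$ passes through and produces a well-defined Deligne class $cl_\D(\alpha)$. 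The standard short exact sequence
$$0\to J^p(X)\to H^{2p}_{\D}(X,\Q(p))\to \mathrm{Hg}^p(X)\to 0,$$
with $\mathrm{Hg}^p(X)\subset H^{2p}(X,\Q(p))\cap F^p$ the space of weight $2p$ Hodge cycles, then gives the desired homomorphism upon composition.

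Finally, one must identify the composite with \eqref{eq:cyclemap} after projection to $\tilde H^{2p}(X,\Q)$. Since $\tilde H^{2p}(X)$ is detected by the simplicial degree $0$ piece, only $\alpha_0=\pi^*\alpha$ contributes, and the KLM formulas in that case recover the classical cycle class of $\alpha_0\in CH^p(\tilde X)_\Q$, which by the previous proposition lifts uniquely to $\tilde H^{2p}(X,\Q(p))$ under the inclusion $\tilde H^{2p}(X)\subset H^{2p}(\tilde X)$. The main obstacle I expect is the simplicial bookkeeping in the assembly step: verifying that the naturality of $(\Omega,T,R)$ under the face maps is strict enough, and tracking signs so that the identities $dR_a=\Omega_a-T_a$ combine with $\delta$ and $\partial$ into a genuine cocycle in the Deligne total complex. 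Once this compatibility is settled, the rest of the argument is formal.
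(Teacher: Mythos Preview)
Your plan is essentially the paper's approach: represent $\alpha$ by Hanamura data $(\alpha_0,\alpha_1,\ldots)$, apply the KLM currents $A(\alpha_a),B(\alpha_a),C(\alpha_a)$ (your $\Omega_a,T_a,R_a$) levelwise, and assemble across the simplicial index. The paper does not route through a Deligne cohomology of $\tilde X_\dt$; instead it works directly with the total complex of currents and the Carlson recipe for $\varepsilon(\alpha)$: the identities $dA_a=\partial A_{a-1}$, $dB_a=\partial B_{a-1}$, $dC_a=A_a-B_a-\partial C_{a-1}$ show that $(A_\dt)$ and $(B_\dt)$ are cocycles and $(A_\dt-B_\dt)$ is a coboundary, whence $\varepsilon(\alpha)=0$, and $\alpha\mapsto (2\pi i)^p(B_0,B_1,\ldots)$ is the required homomorphism. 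Your Deligne-cohomology packaging is a legitimate reformulation of the same computation.

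Two technical corrections to your description, both relevant to the obstacle you flag. First, the KLM outputs are currents, not $C^\infty$ forms: $A(\alpha_a)$ and $C(\alpha_a)$ are push-forwards of forms with log poles and hence singular along the cycle; you must work in a subcomplex $\D^\dt_P(\tilde X_a)$ of currents admitting pullback along the face maps. Second, the face maps $p_i$ are not flat in general, so ``flat pullback'' is not the right mechanism. The paper handles this by introducing currents with \emph{mild singularities} (locally $L^1$ forms, $C^\infty$ off a semialgebraic set) and proving a push-pull compatibility lemma (lemma~\ref{lemma:pushpull}) for such currents along morphisms whose image is not contained in the singular locus. That lemma, together with the admissibility condition on the cycles $\alpha_a$, is exactly what makes the assembly step go through; it is the substantive content behind the bookkeeping you anticipate.
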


The proof will be given below after the necessary preparation.

 Recall that if  $Y$ is an $n$-dimensional complex manifold, the space of
degree $p$ currents $\D^p(U)$, over an open set $U\subseteq Y$, is the
topological dual of the space of compactly supported  
forms $\E_0^{2n-p}(U)$ cf \cite[chap 3 \S1]{gh}. These form a complex
of fine sheaves.  We denote the differential by $d$.
This admits a
bigrading into $(p,q)$ type, and therefore a Hodge filtration. 
Any $(p,q)$ differential form $\alpha$ with locally $L^1$-coeffients
defines an element $\cC(\alpha)\in \D^{p,q}(Y)$ given by $\phi\mapsto
\int_Y\alpha\wedge \phi$. When $\alpha$ is $C^\infty$, we  will usually just conflate $\alpha$ and $\cC(\alpha)$. 
However, it is a good idea to  maintain a distinction  when $\alpha$
is singular because  certain operations such as $d$ do not commute
with $\cC$ in general.
Any piecewise smooth oriented chain $(2n-p)$-chain
$\Gamma$ in $Y$, defines an element  $T_\Gamma\in \D^p(Y)$ given
by $T_\Gamma(\phi)=\int_\Gamma\phi$. Let $\D^p_\Q(Y)$
denote the span of such chains with rational coefficients.
We give $\D^p(Y)$ the weak topology: $\eta_i\to
\eta$ if $\eta_i(\phi)\to\eta(\phi)$ for every $\phi\in
\E_0^{2n-p}(U)$.  Smooth forms are dense. Pushforwards of currents
under proper $C^\infty$-maps are defined as adjoint to pullbacks of
compactly supported forms. Pull backs are more delicate.
If $f:Y'\to Y$ is a $C^\infty$ map, and
$\eta\in \D^p(Y)$, we say that the pullback $f^*\eta$ {\em exists  and is
equal to a current} $\xi\in \D^p(Y')$ if there
is a sequence $\eta_i\in \E^p(Y)$ converging to $\eta$, such
that $f^*\eta_i\to \xi$. We note that $f^*\eta$ need not exist in general.
This definition is implicit in a theorem of H\"ormander \cite[thm 8.2.4]{hormander},
which gives a very general criterion for the existence of pullbacks of
distributions. For our purposes, the following criterion is
sufficient and  easy to check.

\begin{lemma}
  Suppose that $\alpha$ is a locally $L^1$ differential form on  a smooth connected quasiprojective variety $Y$
such that $\alpha$ is $C^\infty$ off of a proper real semialgebraic set $T\subset Y$. If $f:Y'\to Y$ is morphism
from another  smooth connected quasiprojective variety such that $f(Y')\nsubseteq T$. Then
$f^*\cC(\alpha)$ exists and is given by $\cC(f^*\alpha)$.
\end{lemma}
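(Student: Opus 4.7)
The plan is to approximate $\alpha$ by a sequence of smooth forms via a cutoff near $T$, verify weak convergence of this sequence to $\cC(\alpha)$, and then transfer the convergence through $f^*$ by dominated convergence, once local integrability of $f^*\alpha$ on $Y'$ has been secured. First I would observe that because $f$ is a morphism of complex varieties (hence real algebraic) and $T\subset Y$ is real semialgebraic, $f^{-1}(T)\subset Y'$ is also real semialgebraic by Tarski--Seidenberg; since $f(Y')\not\subseteq T$ and $Y'$ is connected, $f^{-1}(T)$ is a proper semialgebraic subset of $Y'$, and in particular has Lebesgue measure zero. Consequently $f^*\alpha$ is well defined and $C^\infty$ on the dense open set $Y'\setminus f^{-1}(T)$.

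Next I would choose a decreasing sequence of open semialgebraic neighborhoods $U_n$ of $T$ with $\bigcap_n U_n=T$, together with smooth cutoffs $\chi_n\colon Y\to [0,1]$ that vanish on some slightly smaller neighborhood of $T$ and equal $1$ outside $U_n$. Setting $\alpha_n=\chi_n\alpha$ (extended by zero where $\chi_n$ vanishes) produces a sequence of smooth forms on $Y$ with $\alpha_n\to\alpha$ pointwise off $T$ and $|\alpha_n|\le |\alpha|$. Since $\alpha$ is locally $L^1$ and $T$ has measure zero, the dominated convergence theorem gives $\cC(\alpha_n)\to \cC(\alpha)$ weakly in $\D^p(Y)$.

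Then $f^*\alpha_n=(\chi_n\circ f)\cdot f^*\alpha$ is a smooth form on $Y'$ (being the pullback of a smooth form), converges pointwise to $f^*\alpha$ on $Y'\setminus f^{-1}(T)$, and is pointwise bounded by $|f^*\alpha|$. Granting that $f^*\alpha$ is locally $L^1$ on $Y'$, a second application of dominated convergence yields $\cC(f^*\alpha_n)\to \cC(f^*\alpha)$ weakly, which by definition says that $f^*\cC(\alpha)$ exists and equals $\cC(f^*\alpha)$.

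The main obstacle is therefore the local $L^1$ bound on $f^*\alpha$. To handle it I would work locally: because $T$ is real semialgebraic, a Lojasiewicz-type inequality controls the singularities of $\alpha$ along $T$ in terms of (negative) powers of a local defining function $g$ of an appropriate stratum of $T$, and $|g\circ f|$ vanishes with comparable polynomial order along $f^{-1}(T)$ since $f$ is algebraic and $f(Y')\not\subseteq T$. Combined with the hypothesis that $\alpha$ is already locally $L^1$ on $Y$, this gives the required estimate after possibly refining by a resolution of singularities of $f^{-1}(T)$ inside $Y'$. Alternatively, one can invoke H\"ormander's pullback theorem \cite[thm 8.2.4]{hormander}: the wavefront set of $\cC(\alpha)$ is carried by the conormal directions to $T$, and the condition $f(Y')\not\subseteq T$ ensures that $f$ is noncharacteristic on a dense open subset, so the distributional pullback exists and must coincide with $\cC(f^*\alpha)$ on the $C^\infty$ locus, and therefore everywhere.
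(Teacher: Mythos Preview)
The paper does not supply a proof of this lemma; it is stated immediately after the remark that H\"ormander's theorem ``gives a very general criterion for the existence of pullbacks of distributions'' and that ``the following criterion is sufficient and easy to check.'' So there is no argument in the paper to compare yours against.

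Your outline is sound up to the point you yourself flag as ``the main obstacle'': the local integrability of $f^*\alpha$ on $Y'$. Neither of your two proposed resolutions closes this gap. The Lojasiewicz-type argument presupposes that $|\alpha|$ is bounded by a negative power of a defining function of $T$, but the hypotheses give only $\alpha\in L^1_{loc}$ and $C^\infty$ off $T$; no such polynomial bound is available a priori, and even when one is, pulling back along a map that increases the order of contact with $T$ can destroy integrability. The H\"ormander route fails for the same geometric reason: the condition $f(Y')\not\subseteq T$ is far weaker than $f$ being noncharacteristic for $WF(\cC(\alpha))$, and on the locus where $df$ drops rank relative to $T$ the wavefront condition can genuinely fail.

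Indeed the statement is false at the stated level of generality. Take $Y=\C^2$, $T=\{z_1 z_2=0\}$, and
\[
\alpha=\frac{1}{z_1\bar z_2}\,dz_1\wedge d\bar z_2,
\]
which is $C^\infty$ off $T$ and locally $L^1$ since $\int_{|z_i|<1}|z_i|^{-1}\,dA<\infty$ for each factor. With $Y'=\C$ and $f(w)=(w,w)$ one has $f(Y')\not\subseteq T$, yet
\[
f^*\alpha=\frac{1}{|w|^2}\,dw\wedge d\bar w
\]
is not locally integrable at $0$, so $\cC(f^*\alpha)$ is undefined. Thus the difficulty you isolate is not a technicality but an actual obstruction; some additional hypothesis (log-type singularities for $\alpha$, or a transversality condition on $f$ relative to the strata of $T$) is needed. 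In the paper's applications the forms are built from $\frac{dz_i}{z_i}$ and $\log z_i$ and the maps satisfy admissibility/general-position constraints that rule out exactly this kind of collision, so the intended uses are unaffected, but the lemma as stated requires strengthening.
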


 Let us say that $\alpha$ has {\em mild singularities} (along $T$) if the
 assumptions of the last lemma apply. 

\begin{lemma}\label{lemma:pushpull}
 Suppose that we have a commutative diagram
$$
\xymatrix{
 Z'\ar[r]^{F}\ar[d]^{P} & Z\ar[d]^{p} \\ 
 Y'\ar[r]^{f} & Y
}
$$
of smooth projective varieties and a current $\omega$ on $Z$
satisfying the following conditions:
\begin{enumerate}
\item $Z'$ is birational to $Z\times_Y Y'$.
\item $\omega$ has mild singularities along $T\subset Z$.
\item $F(Z')\nsubseteq T$
\item There is a Zariski closed set $\Delta\subset Y$ such that
  $f(Y')\nsubseteq p(T)\cup \Delta$ and $p$ is smooth over $Y-\Delta$.
\end{enumerate}
Then the currents $f^*p_*\omega$  and $ P_*F^*\omega $ both exist, are equal, and
have mild singularities.
\end{lemma}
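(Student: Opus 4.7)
The plan is to represent $\omega$ by a locally $L^1$ form, push that form down using ordinary fibre integration, and use the birationality hypothesis together with a standard base change identity to compare with the other route. Write $\omega = \cC(\alpha)$ for a locally $L^1$ form $\alpha$ on $Z$ that is $C^\infty$ off $T$.

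First I would show that $p_*\omega$ has mild singularities along $p(T) \cup \Delta$. Over $U = Y \setminus \Delta$ the map $p$ is smooth and proper, so fibre integration sends smooth forms to smooth forms and locally $L^1$ forms to locally $L^1$ forms; in particular the resulting form $p_*\alpha$ is $C^\infty$ on $U \setminus p(T)$. Globally on $Y$ one has $p_*\omega = \cC(p_*\alpha)$ by a standard computation of the pushforward of currents against test forms. Properness of $p$ implies $p(T)$ is a proper closed semialgebraic set, and $\Delta$ is Zariski closed, so $p_*\alpha$ has mild singularities along $p(T) \cup \Delta$.

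With that in hand, the assumption $f(Y') \not\subset p(T) \cup \Delta$ combined with the preceding lemma gives $f^* p_* \omega = \cC(f^* p_*\alpha)$, and similarly $F(Z') \not\subset T$ gives $F^*\omega = \cC(F^*\alpha)$. Since $P$ is proper between smooth projective varieties, $P_* F^*\omega$ is defined and equals $\cC(P_* F^*\alpha)$ by the same fibre integration argument. To prove equality I would compare the smooth forms $f^* p_*\alpha$ and $P_* F^*\alpha$ on the Zariski dense open $V = Y' \setminus f^{-1}(p(T) \cup \Delta)$. Over $V$, the restriction of $P$ and the base change of $p$ along $f|_V$ agree up to the birational identification of $Z'$ with $Z \times_Y Y'$, which is an isomorphism off a lower dimensional exceptional locus that does not contribute to fibre integration of smooth forms. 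The classical base change identity for fibre integration along smooth proper maps then gives equality on $V$; since both currents are represented by locally $L^1$ forms agreeing on a dense open, they agree on all of $Y'$, and mildness of the common value is inherited from the first step.

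The main obstacle is the first step: pushing forward a current with mild singularities need not produce a current of the same regularity class when the pushforward map is not smooth. The crucial input is the smoothness of $p$ over $Y \setminus \Delta$, which makes fibre integration a well behaved operation preserving both local integrability and $C^\infty$-regularity off $p(T)$. A secondary subtlety in the comparison step is verifying that the birational identification of $Z'$ with $Z \times_Y Y'$ causes no harm; this comes down to the fact that a proper birational morphism of smooth varieties induces the identity on pushforwards of smooth forms away from the exceptional set, which is exactly the situation we are in on the dense open $V$.
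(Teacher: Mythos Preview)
Your proof is correct and follows essentially the same strategy as the paper: reduce to a dense open set where $p$ is smooth and the diagram is Cartesian, invoke the base change identity for fibre integration of smooth forms there, and then extend to all of $Y'$ by an $L^1$ argument. The only cosmetic difference is that the paper absorbs the exceptional locus of the birational map $Z'\to Z\times_Y Y'$ into $\Delta$ (``after enlarging $\Delta$ if necessary'') so that the restricted diagram is genuinely Cartesian, whereas you argue directly that this exceptional locus has measure zero in the base and therefore does not affect the equality of the associated $L^1$ forms; both are valid and amount to the same thing.
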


\begin{proof}

As a first case, suppose that $T=\emptyset$ (so that $\omega$ is $C^\infty$),
 $p$ is smooth, and the diagram is Cartesian. 
Then $p_*\omega$ and $P_*F^*\omega$ are gotten by integration along
the fibres. An easy calculation shows  that integration along fibres
commutes with pullback, therefore $f^*p_*\omega= P_*F^*\omega $.

In the general case, we can assume that the assumptions of the first case
hold for the diagram
$$
\xymatrix{
 Z'-P^{-1}(f^{-1}(p(T)\cup\Delta)\ar[r]\ar[d]^{P} & Z-T\cup p^{-1}\Delta\ar[d]^{p} \\ 
 Y'-f^{-1}(p(T)\cup\Delta)\ar[r]^{f} & Y- p(T)\cup\Delta
}
$$
after enlarging $\Delta$ if necessary. Thus we have equality of forms
$f^*p_*\omega= P_*F^*\omega $  on the complement on
$f^{-1}(p(T)\cup\Delta)$.
The only additional thing  to observeis that the forms are  $L^1$ on $Y$. We can see this by observing
that $\int_{Y'} |P_*F^*\omega| dvol\le Const.\int_{Z'} |F^*\omega| dvol$.

\end{proof}

 Given a set of maps $F=\{f_i:Y_i\to Y\}$, let
$\D_F^p(U)\subset \D^p(U)$ be the set currents for which the pullback
 along $f_i\in F$ exists. This is easily seen to give a subsheaf of
 $C_Y^\infty$-modules.  Moreover, $d(\D^p_F)\subseteq \D^{p+1}_F$. 
Let   $\D_{F;\Q}(U)= \D_{F}(U)\cap \D_\Q(U)$.

The currents of interest to us were constructed by  Kerr, Lewis and
M\"uller-Stach \cite{klm}. Given subvariety  $Z\subset Y\times
\square^n$, we can pull back the coordinates $z_i$ on $\square^n$ to  functions on
$Z$. We will say that $Z$ is {\em admissible} if 
 $Z$ meets  all intersections of divisiors $(z_i)$ and faces properly,
 and in particular that $z_i$ does not vanish on $Z$. 
 Choose a desingularization of a
compactification $\tilde Z$ of $Z$, and pull back $z_i$ to this space.
We define the currents and cycles on $\tilde Z$ by
\begin{equation*}
  \begin{split}
    A'(\tilde Z) &= \cC(\frac{dz_1}{z_1}\wedge\ldots \wedge\frac{dz_n}{z_n})\\
 \Gamma_i &= z_i^{-1}[-\infty, 0] \text{ (as a cycle oriented from
   $-\infty$ to $0$)}\\
 B'(\tilde Z) &= T_{\Gamma_1\cap \ldots \cap\Gamma_n}\\
 C'(\tilde Z) &=\cC(\log z_1\frac{dz_2}{z_2}\wedge\ldots \frac{dz_n}{z_n}\pm (2\pi
 i) \log z_2 \frac{dz_3}{z_3}\wedge\ldots
 \frac{dz_n}{z_n}T_{\Gamma_1}+\\
& \ldots\pm (2\pi i)^{n-1} T_{\Gamma_1\cap \ldots \Gamma_{n-1}})
  \end{split}
\end{equation*}
The logarithm above is the branch with imaginary part in $(-\pi,\pi)$ on $\C-[-\infty,0]$. 
Now define
$$A(Z) = \pi_*A'(\tilde Z),\> B(Z) = \pi_* B'(\tilde Z), \>C(Z) =\pi_*
C'(\tilde Z)$$
where $\pi:\tilde Z\to Y$ is the projection.

Suppose that $Z=\sum n_iZ_i\in Z_c^p(Y,n)$ is a
cycle all of whose components are admissible, then we can 
extend the above definitions by linearity to obtain currents
$$A(Z)\in F^p\D^{2p-n}(Y),\>  B(Z)\in \D_\Q^{2p-n}(Y),\> C(Z) \in \D^{2p-n-1}(Y)$$

\begin{prop}[{\cite[(5.5)]{klm}}]\label{prop:klm}
  The following relations hold
$$d A(Z) = (2\pi i) A(\delta Z)$$
$$dB(Z) = B(\delta Z)$$
$$ d C(Z) = A(Z) - (2\pi i)^n B(Z) - 2\pi iC(\delta Z)  $$
\end{prop}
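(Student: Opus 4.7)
The plan is to reduce everything to the case of a single admissible subvariety $Z\subseteq Y\times\square^n$ by linearity, pass to a log resolution $\tilde Z$ of a smooth projective compactification on which each $z_i$ becomes a morphism $\tilde Z\to \PP^1$ with normal-crossings zero and pole divisors, verify the three identities at the level of currents on $\tilde Z$, and then push forward via $\pi_*$. Pushforward along the proper map $\pi$ commutes with $d$, so an identity of currents on $\tilde Z$ yields the corresponding identity for $A(Z), B(Z), C(Z)$ on $Y$.

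For the first identity I would use the Poincar\'e--Lelong formula for logarithmic forms: each $dz_i/z_i$ is closed where $z_i\neq 0,\infty$, but as currents on $\tilde Z$
$$d(dz_i/z_i) = 2\pi i\,T_{(z_i)_0} - 2\pi i\,T_{(z_i)_\infty}.$$
Applying Leibniz to $A'(\tilde Z)=\cC(\bigwedge_i dz_i/z_i)$, the contributions arise exactly from the loci $z_i=0,\infty$. These loci, by the admissibility hypothesis, pull back on $Z$ to the cubical faces of $Z$, and the alternating signs match the definition of $\delta Z$ in \eqref{eq:delta}. Pushing forward yields $dA(Z)=(2\pi i)A(\delta Z)$.

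For the second identity I would invoke Stokes for currents of integration: $dT_\Gamma=\pm T_{\partial\Gamma}$. The boundary of $\Gamma_1\cap\cdots\cap \Gamma_n$ decomposes as
$$\partial(\Gamma_1\cap\cdots\cap\Gamma_n) \;=\; \sum_{i=1}^n(-1)^{i-1}\,\Gamma_1\cap\cdots\widehat{\Gamma_i}\cdots\cap\Gamma_n\bigl|_{(z_i)_0-(z_i)_\infty}$$
since $\partial\Gamma_i = (z_i)_0-(z_i)_\infty$. Again by admissibility these restrictions match the face cycles of $Z$, with signs reassembling into the cubical boundary, giving $dB(Z)=B(\delta Z)$.

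The third identity is the main content and main obstacle, as it mixes the two preceding mechanisms. I would differentiate $C'(\tilde Z)$ term by term. In the leading summand $\cC(\log z_1 \cdot \frac{dz_2}{z_2}\wedge\cdots\wedge\frac{dz_n}{z_n})$, Leibniz produces one term $\cC(\frac{dz_1}{z_1}\wedge\cdots\wedge\frac{dz_n}{z_n})=A'(\tilde Z)$, plus terms where $d$ hits some $\frac{dz_j}{z_j}$ $(j\ge 2)$ and the resulting $\delta$-current along $(z_j)$ forces $\log z_1$ to be evaluated on a face, producing pieces of $-2\pi i\,C(\delta Z)$. Differentiating the successive summands of $C'(\tilde Z)$ yields a telescoping cancellation analogous to the bar construction: each intermediate summand $(2\pi i)^{k-1}\log z_k\,\frac{dz_{k+1}}{z_{k+1}}\wedge\cdots \wedge T_{\Gamma_1\cap\cdots\cap\Gamma_{k-1}}$ contributes via $d\log z_k = dz_k/z_k + (2\pi i)\,T_{\Gamma_k}$ (using the branch cut $[-\infty,0]$) a term that kills the $(2\pi i)^{n-1}\log z_{k+1}$-leftover from the previous summand. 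The terminal summand $(2\pi i)^{n-1}T_{\Gamma_1\cap\cdots\cap\Gamma_{n-1}}$, by Stokes, produces $-(2\pi i)^n B'(\tilde Z)$ together with the last remaining face contribution to $-2\pi i\,C(\delta Z)$. The hard part is the sign and index bookkeeping in this telescoping: I would set it up by induction on $n$, using that $C'$ at level $n$ is essentially $\log z_1\wedge (\text{analogue for }n-1) \pm (2\pi i) T_{\Gamma_1}\cdot(\text{analogue for }n-1)$, and verifying the branch-cut jump $\log z_1|_+-\log z_1|_- = 2\pi i$ along $\Gamma_1$ is what balances the two summands into the single expression $-2\pi i\,C(\delta Z)$. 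Once these identities are established on $\tilde Z$, pushing forward by $\pi$ — which is legitimate thanks to the mild-singularity framework of Lemma~\ref{lemma:pushpull} applied to admissible $Z$ — gives the three identities on $Y$.
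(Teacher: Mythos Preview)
The paper does not supply a proof of this proposition; it is quoted verbatim from Kerr--Lewis--M\"uller-Stach \cite[(5.5)]{klm}, so there is nothing in the paper to compare your argument against. That said, your outline is essentially the argument given in the original reference: reduce by linearity to a single admissible $Z$, pass to a resolution $\tilde Z$, establish the identities there from the one-variable current formulas
\[
d\,\cC\!\left(\frac{dz}{z}\right)=2\pi i\,(\delta_0-\delta_\infty),\qquad d\,T_\Gamma=T_{\partial\Gamma},\qquad d\,\cC(\log z)=\cC\!\left(\frac{dz}{z}\right)\pm 2\pi i\,T_\Gamma,
\]
and then push forward along the proper map $\pi$, which commutes with $d$. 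The telescoping bookkeeping you sketch for the third relation is exactly how \cite{klm} organizes the computation.

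One minor correction: you appeal to Lemma~\ref{lemma:pushpull} to justify the final pushforward, but that lemma is about the existence of \emph{pullbacks} of mildly singular currents and their compatibility with pushforward in a fibre-square; here you only need that $\pi_*$ commutes with $d$ on currents, which holds for any proper $C^\infty$ map and requires no special hypothesis on the singularities.
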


Finally note that \cite[\S 5.4]{klm} shows that the subcomplex of
admissible cycles $Z^p(Y,\dt)_{ad}\subset Z^p(Y,\dt)$ is quasiisomorphic to
the full complex. A similar argument applies to distinguished complexes.

\begin{proof}[Proof of theorem \ref{thm:obstrvan}]
We start by proving the weaker statement immediately preceding the theorem that if $\alpha\in H_M^{2p}(X,\Q(p))$,
then $\varepsilon(\alpha)=0$.
In order to calculate $\varepsilon(\alpha)$, we use a modification of the set up
described in the paragraph before proposition \ref{prop:epsilon1}.
We replace  the complex of differential forms $(\E^\dt(\tilde X_a),d)$ with the complex
of currents $(\D^\dt_P(\tilde X_a),d)$, where $P$  is the set of face maps
$p_b:\tilde X_{a+1}\to \tilde X_a$. This forms a double complex, with the
second differential given by the simplicial coboundary $\partial =
\sum (-1)^b p_b^*$.
Let $D^\dt$  and $D_\Q^\dt$ be the total complexes 
$\D^\dt_P(\tilde X_\dt)$ and $\D^\dt_{P;\Q}(\tilde X_\dt)$. Then we will choose our representatives
$A = (A_0, A_1,\ldots)\in F^pD^{2p}$ and $B=(B_0, B_1,\ldots) \in
D_\Q^{2p}$, and $\varepsilon(\alpha)$ will be represented by the
difference $A-B$.

The element $\alpha\in H_M^{2p}(X,\Q(p))$ can be represented by a collection of cycles
$$\alpha_a\in Z^p(\tilde X_a, a)_{ad}'$$
such that
\begin{equation}
  \label{eq:alpha}
  \begin{split}
  \delta\alpha_0 &=0\\
\partial \alpha_0&=\delta \alpha_1\\
\ldots
\end{split}
\end{equation}

We can associate the currents
$$A_a=(2\pi i)^{-a}A(\alpha_a)\in F^p\D^{2p-a}(\tilde X_a)$$
$$B_a = B(\alpha_a)\in \D_\Q^{2p-a}(\tilde X_a)$$
$$C_a= (2\pi i)^{-a}C(\alpha_a)\in \D^{2p-a-1}(\tilde X_a)$$
as above. By lemma \ref{lemma:pushpull}, we can see that the pull backs of these
currents along the face maps $p_b:\tilde X_{a+1}\to \tilde X_a$ exist, and
$$p_b^*A_a = A(p_b^*\alpha)$$
etc. It follows from this, proposition \ref{prop:klm}, and \eqref{eq:alpha} that
$$ dA_a = (2\pi i)^{1-a} A(\delta \alpha_a) = (2\pi i)^{1-a}A(\partial \alpha_{a-1})
=\partial A_{a-1}$$
$$ dB_a = B(\delta\alpha_a) = \partial B_{a-1}$$
$$ dC_a = A_a-B_a - \partial C_{a-1}$$
This implies that $(A_\dt)\in F^pD^\dt$  and $(B_\dt)\in D_\Q^\dt$ are
cocycles, and that
$(A_\dt-B_\dt)$ is a coboundary. This proves that $\varepsilon(\alpha)=0$. Now to get the full
statement, note that $\alpha\mapsto (2\pi i)^p(B_0,B_1,\ldots)$ determines homomorphism from
$H_M^{2p}(X,\Q(p))$ to the space of weight $2p$ Hodge cycles in $H^{2p}(X,\Q(p))$.

\end{proof}

\section{Lefschetz $(1,1)$ theorem}

We start with some explicit descriptions of degree two motivic
cohomology. Throughout this section, $X$ is a projective variety over
$\C$, with a GNPP resolution $\tilde X_\dt\to X$. The face maps are
denoted by $p_i$. We use the notation
$Div_g(\tilde X_i), R_g(\tilde X_i)^*$ for divisors or functions in
general position with respect to face maps introduced in \S \ref{sect:HC}.

\begin{prop}\label{prop:H2M}
An element of $H^2_M(X,\Z(1))$ is represented by
 a pair $(D,f)$, where $D\in Div_g(\tilde X_0)$ and $f\in R_g(\tilde X_1)^*$ such that
 \begin{equation}
  \label{eq:cobound}
  \begin{split}
\partial D &= (f)\\
 \partial f &=1  \text{ ($\partial$ on $R_g^*$ is  multiplicative)}
\end{split}
\end{equation}
Two pairs $(D_i, f_i)$ represent the same class if there exists $g\in
R_g(\tilde X_0)^*$ such that
\begin{equation*}
   \begin{split}
D_1-D_2 &= (g)\\
  f_1/f_2 &= \partial g  
\end{split}
\end{equation*}
\end{prop}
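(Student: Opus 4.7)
The plan is to invoke Theorem~\ref{thm:motivic} to identify $H^2_M(X,\Z(1))$ with $H^0$ of the total complex of the Hanamura double complex $Z^1_s(\tilde X_\dt,\dt)'$, and then to recognise this $H^0$ as being computed by a quasi-isomorphic but much simpler double complex whose rows have only two nontrivial entries.

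The first input is the classical codimension-one description of Bloch's complex: for any smooth variety $Y$ one has $CH^1(Y,0)=Pic(Y)$, $CH^1(Y,1)=\cO(Y)^*$, and $CH^1(Y,n)=0$ for $n\ge 2$. Concretely, sending $f\in R_g(Y)^*$ to its graph $\Gamma_f\subset Y\times\Delta^1$ (in degree $-1$) and $D\in Div_g(Y)$ to itself (in degree $0$) defines a quasi-isomorphism
\[
\bigl[\, R_g(Y)^* \xrightarrow{\,f\mapsto (f)\,} Div_g(Y) \,\bigr] \longrightarrow Z^1_s(Y,\dt)'.
\]
By choosing the distinguished subcomplex data $\cX$ compatibly along the simplicial direction, this becomes natural under the face maps $p_i:\tilde X_{a+1}\to\tilde X_a$: the general position built into $Div_g$ and $R_g$ ensures that $p_i^*\Gamma_f=\Gamma_{p_i^*f}$ and $p_i^*D\in Div_g(\tilde X_{a+1})$. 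Hence the two-term complexes assemble into a double complex that maps quasi-isomorphically to $Z^1_s(\tilde X_\dt,\dt)'$, and the induced map on total complexes is a quasi-isomorphism.

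Reading off $H^0$ of the simpler total complex is then mechanical. A $0$-cocycle has components only in bidegrees $(0,0)$ and $(1,-1)$, producing a pair $(D,f)$ with $D\in Div_g(\tilde X_0)$ and $f\in R_g(\tilde X_1)^*$; the cocycle condition decomposes into the $(1,0)$-component $\partial D=(f)$ and the $(2,-1)$-component $\partial f=1$ (writing $R^*$ multiplicatively), which is exactly \eqref{eq:cobound}. A $(-1)$-cochain is an element $g\in R_g(\tilde X_0)^*$ in bidegree $(0,-1)$, and its coboundary is $((g),\partial g)$, recovering the stated equivalence $(D_1,f_1)\sim(D_2,f_2)$ iff $D_1-D_2=(g)$ and $f_1/f_2=\partial g$. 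The main technical point — and the only real obstacle — is arranging the two-term substitution to be compatible with all face maps inside the distinguished-subcomplex formalism; given Hanamura's cofinality properties for intersections and pullbacks of distinguished subcomplexes, the general-position subscripts in $Div_g$ and $R_g$ suffice, and this reduces to a bookkeeping exercise rather than a conceptual difficulty.
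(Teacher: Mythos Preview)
Your proof is correct and follows essentially the same approach as the paper: both reduce, via Theorem~\ref{thm:motivic}, to replacing each column $Z^1_s(\tilde X_i,\dt)'$ of the Hanamura double complex by the two-term complex $R_g(\tilde X_i)^*\to Div_g(\tilde X_i)$ and then reading off $H^0$ of the total complex. The only difference is that the paper cites Nart \cite{nart} for the column-wise quasi-isomorphism, whereas you describe it explicitly via $f\mapsto\Gamma_f$; these amount to the same thing.
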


\begin{proof}
 Consider the diagram
$$
\xymatrix{
  & Z^1(\tilde X_0,0)'\ar[rr]\ar[ld]^{\phi} &  & Z^1(\tilde X_1, 0)'\ar[rr]\ar[ld]^{\phi} &  & Z^1(\tilde X_2, 0)'\ar[ld]^{\phi} \\ 
 Div_g(\tilde X_0)\ar[rr] &  & Div_g(\tilde X_1)\ar[rr] &  & Div_g(\tilde X_2) &  \\ 
  & Z^1(\tilde X_0,1)'\ar[rr]\ar[uu]\ar[ld]^{\phi} &  & Z^1(\tilde X_1,1)'\ar[rr]\ar[uu]\ar[ld]^{\phi} &  & Z^1(\tilde X_2,1)'\ar[uu]\ar[ld]^{\phi} \\ 
 R_g(\tilde X_0)^*\ar[rr]\ar[uu] &  & R_g(\tilde X_1)^*\ar[rr]\ar[uu] &  & R_g(\tilde X_2)^*\ar[uu] & 
}
$$
The  columns $R_g(-)\to Div_g(-)$ of the front face  have just the  two terms, but the remaining
columns $Z^1(\tilde X_\dt, \dt)$ may be longer. The diagonal arrows $\phi$ are constructed by
Nart \cite{nart}; they give quasi-isomorphisms between columns. Thus we
may use the total complex of  the front face to compute
$CH_H^1(X,*)$. In particular, this yields the description of $H_M^2(X,\Z(1))=CH^1_H(X,0)$ stated 
above.
\end{proof}

We can use this description to construct certain elements of motivic
cohomology.
 If $E$ is Cartier divisor on $X$, $(\pi^* E, 1)\in
 H^2_M(X,\Z(1))$. Of course,
 in general, there are additional elements in $ H^2_M(X,\Z(1))$. 
Given a simplicial scheme such as $\tilde X_\dt$, we can apply  the
connected components functor $\pi_0$ to get a simplicial set called
the dual complex $\Sigma$. Composing this with the free abelian group
functor gives a simplicial abelian group whose cohomology we denote by
$H^*(\Sigma,\Z)$. This is the same thing as the singular cohomology
Its geometric realization $|\Sigma|$.

\begin{cor}
  We have an exact sequence
$$1\to H^1(\Sigma,\Z)\otimes \C^*\to H_M^2(X,\Z(1))\to Pic(\tilde X_0)$$
The image of the
last map is the set of   classes of divisors $D$ satisfying
\eqref{eq:cobound} for some $f$.
\end{cor}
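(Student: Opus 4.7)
The plan is to read off both the map and its kernel directly from the explicit presentation of $H^2_M(X,\Z(1))$ given in Proposition \ref{prop:H2M}. On representatives I would define the map $H^2_M(X,\Z(1))\to Pic(\tilde X_0)$ by $(D,f)\mapsto [D]$; well-definedness is automatic from the equivalence relation, since an equivalence via $g\in R_g(\tilde X_0)^*$ satisfies $D_1-D_2=(g)$ and hence $[D_1]=[D_2]$. The image statement is tautological once this is set up: a class $[D]\in Pic(\tilde X_0)$ lies in the image precisely when one can solve \eqref{eq:cobound} for some $f$.

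For the kernel, I would argue that if $(D,f)$ maps to zero then $D=(g)$ for some $g\in R(\tilde X_0)^*$; after a standard general-position moving step we may take $g\in R_g(\tilde X_0)^*$, and then $(D,f)\sim (0, f\cdot(\partial g)^{-1})$. Thus every kernel class is represented by some $(0, f')$ with $(f')=0$ on $\tilde X_1$, meaning $f'\in (\C^*)^{\pi_0(\tilde X_1)}$ is locally constant and $\partial f'=1$. Two such representatives differ by $\partial h$ for $h\in (\C^*)^{\pi_0(\tilde X_0)}$. Recognising $[n]\mapsto (\C^*)^{\pi_0(\tilde X_n)}$ as the cosimplicial group of $\C^*$-valued cochains on the dual complex $\Sigma=\pi_0(\tilde X_\dt)$, this identifies the kernel canonically with $H^1(\Sigma,\C^*)$.

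Finally I would embed $H^1(\Sigma,\Z)\otimes\C^*$ into $H^1(\Sigma,\C^*)$ by means of the exponential coefficient sequence $0\to\Z\to\C\to\C^*\to 0$: since $\C$ is a $\Q$-vector space and $H^1(\Sigma,\Z)$ is finitely generated and free, $H^1(\Sigma,\C)=H^1(\Sigma,\Z)\otimes\C$, and the associated long exact cohomology sequence realises the cokernel of $H^1(\Sigma,\Z)\hookrightarrow H^1(\Sigma,\C)$ — which is precisely $H^1(\Sigma,\Z)\otimes\C^*$ — as a subgroup of $H^1(\Sigma,\C^*)$. Composing with the kernel identification produces the left-hand map of the sequence.

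The main technical point is the moving step producing a general-position $g$ and checking that the front face of the double complex in the proof of Proposition \ref{prop:H2M} captures all the relations in $H^2_M$; both reduce to the general-position arguments behind Nart's quasiisomorphism $\phi$. With those in hand the rest is bookkeeping.
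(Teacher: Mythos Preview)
Your approach is essentially the paper's: define $(D,f)\mapsto [D]$, check well-definedness from the equivalence relation, and identify the kernel with classes $(0,f')$ where $f'\in(\C^*)^{\pi_0(\tilde X_1)}$ satisfies $\partial f'=1$, modulo $\partial$ of locally constant functions on $\tilde X_0$. The paper's own proof does exactly this, in fewer words.

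One point worth flagging. You correctly compute the kernel as $H^1(\Sigma,\C^*)$, and then produce an \emph{embedding} $H^1(\Sigma,\Z)\otimes\C^*\hookrightarrow H^1(\Sigma,\C^*)$ via the exponential sequence. For the sequence to be exact at $H^2_M(X,\Z(1))$ as stated, you need this embedding to be an isomorphism; from the long exact sequence
\[
H^1(\Sigma,\C)\to H^1(\Sigma,\C^*)\to H^2(\Sigma,\Z)\to H^2(\Sigma,\C)
\]
this amounts to $H^2(\Sigma,\Z)$ being torsion-free. The paper's proof simply writes $H^1(\Sigma,\Z)\otimes\C^*$ for the kernel without comment, so it is tacitly making the same identification. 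Your argument is in fact more careful here than the paper's; you just stop one step short of closing the loop (or of noting that the stated kernel should perhaps read $H^1(\Sigma,\C^*)$).
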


\begin{proof}
  We have a map $H_M^2(X,\Z(1))\to Pic(\tilde X_0)$ which sends
  $(D,f)$ to the class of $D$. It  can be checked that
  $\{(0,f)\mid f\in \C(\tilde X_1)^*, \partial f=1\}$ maps onto the
  kernel, and that the kernel of this map is precisely $\{(0,\partial
  g)\mid g\in \C(\tilde X_0)^*\}$.
\end{proof}

The proposition also leads to an interpretation of $H^2_M(X,\Z(1))$ as
line bundles on $\tilde X_0$ with descent data.

\begin{cor}
  Elements of $H^2_M(X,\Z(1))$ is the group of isomorphism classes of pairs $(L,
  \sigma:p_0^*L\cong p_1^*L)$, where $L$ is a line bundle on $\tilde
  X_0$  and $(p_{0}^*\sigma) (p_1^*\sigma)^{-1}(p_2^*\sigma)=1$.
\end{cor}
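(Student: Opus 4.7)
The plan is to translate the combinatorial description of $H^2_M(X,\Z(1))$ given in Proposition \ref{prop:H2M} into the geometric language of line bundles with descent data by means of the standard assignment $D \mapsto \OO_{\tilde X_0}(D)$.

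First I would construct a map in one direction. Given a representative $(D,f)$ satisfying \eqref{eq:cobound}, set $L = \OO_{\tilde X_0}(D)$ with its tautological rational section $s_D$. The relation $\partial D = p_0^*D - p_1^*D = (f)$ on $\tilde X_1$ shows that multiplication by $f$ defines an isomorphism
\[
\sigma\colon p_0^*L \xrightarrow{\sim} p_1^*L,\qquad \sigma(p_0^*s_D) = f\cdot p_1^*s_D.
\]
The remaining identity $\partial f = (p_0^*f)(p_1^*f)^{-1}(p_2^*f) = 1$ on $\tilde X_2$ translates directly into the cocycle condition $(p_0^*\sigma)(p_1^*\sigma)^{-1}(p_2^*\sigma) = 1$. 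The equivalence relation on pairs is preserved: if $(D_1,f_1)$ and $(D_2,f_2)$ differ by $g\in R_g(\tilde X_0)^*$ with $D_1-D_2 = (g)$ and $f_1/f_2 = \partial g$, then multiplication by $g$ gives an isomorphism $\OO(D_1)\xrightarrow{\sim}\OO(D_2)$ intertwining the two descent data. So we get a well-defined map from $H^2_M(X,\Z(1))$ to isomorphism classes of such pairs $(L,\sigma)$.

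Next I would construct the inverse. Given $(L,\sigma)$ satisfying the cocycle condition, choose a nonzero rational section $s$ of $L$, and set $D = (s) \in Div(\tilde X_0)$. Then $\sigma(p_0^*s)/p_1^*s$ is a rational function $f \in R(\tilde X_1)^*$ whose divisor is $\partial D$, and the cocycle identity for $\sigma$ forces $\partial f = 1$. Two choices of rational section differ by multiplication by a rational function $g$ on $\tilde X_0$, producing equivalent pairs $(D,f)$. To see that this gives an inverse to the previous construction, one just checks the compositions are the identity, which is immediate from the definitions.

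The main subtlety, which I expect to be the chief obstacle, is arranging the general position conditions: the pair $(D,f)$ produced from $(L,\sigma)$ must land in $Div_g(\tilde X_0) \times R_g(\tilde X_1)^*$. I would handle this by a moving argument: the bad locus (components of $D$ containing images of components of $\tilde X_1$, or poles/zeros of $f$ containing images of components of $\tilde X_2$) is a proper closed condition on the choice of rational section of $L$, so a generic choice works. Since all competing choices differ by multiplication by a rational function $g$ on $\tilde X_0$, which is exactly the equivalence relation in Proposition \ref{prop:H2M}, the resulting class in $H^2_M(X,\Z(1))$ is well-defined. This establishes the claimed bijection.
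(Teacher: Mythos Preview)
Your proof is correct and follows the same approach as the paper, which simply writes ``Send $(D,f)$ to $(\OO(D), \OO(p_0^*D)\stackrel{f}{\to} \OO(p_1^*D))$'' and leaves the verification to the reader. You have carefully filled in the details the paper omits, including the inverse construction and the moving argument for general position; these additions are appropriate and make the argument complete.
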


\begin{proof}
  Send $(D,f)$ to $(\OO(D), \OO(p_0^*D)\stackrel{f}{\to} \OO(p_1^*D))$.
\end{proof}

Let $H^1(\tilde X_\dt,\OO_{\tilde X_\dt}^*)$ denote the cohomology of the Zariski simplicial sheaf
$\OO_{\tilde X_\dt}^*$. From the spectral sequence
$$E_1^{pq}= H^q(\tilde X_p, \OO_{\tilde X_p}^*)\Rightarrow H^{p+q}(\tilde X_\dt,
\OO_{\tilde X_\dt}^*)$$
we get an exact sequence
$$ 0\to H^1(\Sigma, \Z)\otimes \C^*\to H^1(\tilde X_\dt,\OO_{\tilde X_\dt}^*)\to
H^1(\tilde X_0,\OO_{\tilde X_0}^*)$$
The image of the last map is precisely the $E_3^{01}$.

\begin{prop}
 There is a natural  isomorphism 
$$\eta:H_M^2(X,\Z(1))\cong H^1(\tilde X_\dt,\OO_{\tilde X_\dt}^*)$$
\end{prop}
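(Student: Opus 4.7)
The plan is to invoke the preceding corollary, which already identifies $H^2_M(X,\Z(1))$ with the group of isomorphism classes of pairs $(L,\sigma)$ where $L$ is a line bundle on $\tilde X_0$ and $\sigma\colon p_0^*L\cong p_1^*L$ satisfies the cocycle condition on $\tilde X_2$. Such data is, essentially by definition, an invertible $\OO_{\tilde X_\dt}$-module on the simplicial scheme, and on a ringed simplicial topos the group of isomorphism classes of invertible sheaves is $H^1$ of the sheaf of units. This gives the asserted isomorphism $\eta$ conceptually; the task is to make it rigorous.

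To do so, I would compute $H^1(\tilde X_\dt,\OO^*_{\tilde X_\dt})$ via a double complex and match it with the Hanamura-style description of $H^2_M(X,\Z(1))$ from Proposition~\ref{prop:H2M}. On each smooth $\tilde X_a$, the exact sequence
$$1\to \OO^*_{\tilde X_a}\to \M^*_{\tilde X_a}\to Div_{\tilde X_a}\to 0$$
(with $\M^*$ the sheaf of non-zero meromorphic functions and $Div$ the sheaf of Weil divisors) is a resolution by Zariski-acyclic sheaves: $\M^*$ is flabby, and $Div$ is a direct sum of skyscraper sheaves at codimension-one points. Hence $R\Gamma(\tilde X_a,\OO^*_{\tilde X_a})$ is represented by the two-term complex $[R(\tilde X_a)^*\to Div(\tilde X_a)]$ in degrees $0,1$. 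Arranging these along the simplicial index $a$ and taking the total complex with the simplicial coboundary $\partial$ in the transverse direction computes $H^*(\tilde X_\dt,\OO^*_{\tilde X_\dt})$. Using the same moving argument that underlies Nart's quasi-isomorphism in the proof of Proposition~\ref{prop:H2M}, one may replace $R^*$ and $Div$ by the general-position subgroups $R_g^*$ and $Div_g$. In total degree $1$, the resulting complex reads
$$R_g(\tilde X_0)^*\longrightarrow Div_g(\tilde X_0)\oplus R_g(\tilde X_1)^*\longrightarrow Div_g(\tilde X_1)\oplus R_g(\tilde X_2)^*,$$
whose cohomology at the middle term is, by Proposition~\ref{prop:H2M}, exactly $H^2_M(X,\Z(1))$.

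Under this matching, $\eta$ sends a representative $(D,f)$ to the pair $(\OO_{\tilde X_0}(D),\sigma)$ with $\sigma\colon p_0^*\OO(D)=\OO(p_0^*D)\xrightarrow{\,\cdot f\,}\OO(p_1^*D)=p_1^*\OO(D)$; the condition $\partial D=(f)$ says that $\sigma$ is well-defined, and the multiplicative cocycle relation $\partial f=1$ on $\tilde X_2$ is precisely what makes $\sigma$ a $\OO^*$-cocycle on the simplicial scheme. Naturality in the simplicial data is automatic from the double-complex construction. The main obstacles are the standard but slightly technical verifications that $\M^*$ and $Div$ are Zariski-acyclic on each smooth $\tilde X_a$, and that the passage from $(R^*,Div)$ to the general-position subcomplex $(R_g^*,Div_g)$ is a quasi-isomorphism of double complexes; both follow from arguments already used in Section~\ref{section:mot}.
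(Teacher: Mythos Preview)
Your proposal is correct and follows essentially the same route as the paper: resolve $\OO^*_{\tilde X_a}$ by the two-term complex of (general-position) rational functions and divisors, assemble these across the simplicial index into a double complex, and identify the total degree~$1$ cohomology with the description of $H^2_M(X,\Z(1))$ from Proposition~\ref{prop:H2M}. The paper carries this out with $R_{g,\tilde X_\dt}^*\to R_{g,\tilde X_\dt}^*/\OO^*_{\tilde X_\dt}$ from the start, passes through an injective resolution (a triple complex $I^{abc}$), and then verifies injectivity via the commutative diagram with the $H^1(\Sigma,\Z)\otimes\C^*$ and $Pic(\tilde X_0)$ rows, and surjectivity by an explicit cocycle reduction using flasqueness of $R_g^*$; your version short-circuits the injective resolution by appealing directly to the acyclicity of $\M^*$ and $Div$, which is cleaner but amounts to the same computation. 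One small caveat: your opening invocation of the preceding corollary is only heuristic, since a pair $(L,\sigma)$ on $\tilde X_0,\tilde X_1,\tilde X_2$ is not literally the same datum as an invertible sheaf on the full simplicial scheme $\tilde X_\dt$; but as you note, the double-complex argument is what actually proves the statement, and that part is sound.
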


\begin{proof}
Let $R_{g,X}^*$ denote the sheaf $U\mapsto R_{g}^*(U)$.
We can see that $\OO^*_{\tilde X_i}\subset R_{g,\tilde X_i}$, so there are exact sequences
$$1\to \OO^*_{\tilde X_i}\to R_{g,\tilde X_i}^*\stackrel{div}{\to}  R_{g,\tilde
  X_i}^*/\OO^*_{\tilde X_i}\to 1 $$
Thus we have a quasi isomorphism
 $$\OO^*_{\tilde X_\dt}\sim_{qi} R_{g,\tilde X_\dt}^*\to  R_{g,\tilde X_\dt}^*/\OO^*_{\tilde X_\dt} $$
 of complexes of simplicial sheaves. This  induces an isomorphism 
  $$H^1(\tilde X_\dt,\OO^*_{\tilde X_\dt})\cong H^1(\tilde X_\dt, R_{g,\tilde X_\dt}^*\to  R_{g,\tilde X_\dt}^*/\OO^*_{\tilde X_\dt} )$$
  We can  compute the right side as the cohomology of the 
  total complex
  $$T^n = \bigoplus_{i+j+k=n} I^{ijk}$$
  where the terms on the right are global sections of injective resolutions fitting into a diagram
$$
\xymatrix{
 R_{g,X_i}^*\ar[r]\ar^{div}[d] & \I^{0,i,0}\ar^{div}[d]\ar^{d}[r]  &\I^{0,i,1}\ar^{div}[d]\ar^{d}[r] &\\
 R_{g,X_i}^*/O_{X_i}^*\ar[r] & \I^{1,i,0}\ar^{d}[r]  &\I^{0,i,1}\ar^{d}[r] &
}
$$
We need to be a bit careful about the sign. The differential of $T$ on $I^{abc}$ is 
$\kappa+ (-1)^a\partial + (-1)^{a+b}d$.
 One sees that if $(D,f)\in H_M^2(X,\Z(1))$ as in proposition \ref{prop:H2M}, then 
 $$(D,f,0)\in T^1=I^{100}\oplus I^{010}\oplus I^{001}$$
 is a cocycle, and hence it defines a class
 $\eta(D,f)\in H^1(X_\dt,\OO_{X_\dt}^*)$. We see that this fits into a
 commutative diagram
$$
\xymatrix{
 1\ar[r] & H^1(\Sigma,\Z)\otimes \C^*\ar[r]\ar[d]^{=} & H_M^2(X,\Z(1))\ar[r]\ar[d]^{\eta} & Pic(\tilde X_0)\ar^{=}[d] \\ 
 1\ar[r] & H^1(\Sigma, \Z)\otimes \C^*\ar[r] & H^1(\tilde X_\dt,\OO_{\tilde X_\dt}^*)\ar[r] & H^1(\tilde X_0,\OO_{\tilde X_0}^*)
}
$$
This implies that $\eta$ is injective. We have to show that it is surjective. Suppose that
$(\alpha,\beta, \gamma)\in T^1$ is a cocycle. This implies that
\begin{equation}
\label{eq:cobound1}
d\gamma=0
\end{equation}

\begin{equation}
\label{eq:cobound2}
-d\beta+\partial\gamma=0
\end{equation}

\begin{equation}
\label{eq:cobound3}
-d\alpha+ \kappa\gamma=0
\end{equation}

\begin{equation}
\label{eq:cobound4}
\partial\beta=0
\end{equation}

Since $R_g^*$ is flasque, equation \eqref{eq:cobound1} implies that $\gamma = d\xi$ for some $\xi\in I^{000}$
After adding the coboundary corresponding to $-\xi$ to $(\alpha,
\beta, \gamma)$, the remaining equations imply that
it lies in the image of $H_M^2(X,\Z(1))$.
  \end{proof}

  \begin{cor}\label{cor:HM}
    There is a natural  isomorphism 
$$\eta:H_M^2(X,\Z(1))\cong H^1(\tilde X_{an,\dt},\OO_{\tilde X_{an,\dt}}^*)$$
where $\OO_{\tilde X_{an,\dt}} $  is the simplicial sheaf of
holomorphic functions.
  \end{cor}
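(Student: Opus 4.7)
The plan is to deduce this from the preceding proposition by means of GAGA. The identity map on underlying topological spaces gives a morphism of simplicial ringed sites $\tilde X_{an,\dt}\to\tilde X_\dt$ and hence a natural comparison map
$$H^1(\tilde X_\dt,\OO_{\tilde X_\dt}^*)\to H^1(\tilde X_{an,\dt},\OO_{\tilde X_{an,\dt}}^*).$$
Composing with $\eta$ of the preceding proposition will yield the desired map, so it suffices to show this comparison is an isomorphism.

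The key step is to compare the two descent spectral sequences
$$E_1^{pq}=H^q(\tilde X_p,\OO^*)\Rightarrow H^{p+q}(\tilde X_\dt,\OO^*)$$
in the Zariski and analytic topologies, viewing the comparison of sites as giving a morphism between them. Since each $\tilde X_p$ is smooth projective, GAGA for the Picard group gives $H^1(\tilde X_p,\OO^*)=\mathrm{Pic}(\tilde X_p)\cong\mathrm{Pic}(\tilde X_{an,p})=H^1(\tilde X_{an,p},\OO^*)$, while $H^0(\tilde X_p,\OO^*)=\prod_{\pi_0(\tilde X_p)}\C^*$ in both topologies. Consequently the natural map of $E_1$-pages is an isomorphism in rows $q=0$ and $q=1$, and the same then holds on $E_2$ in those rows.

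Next one checks that the contributions to $H^1$ of the abutment depend only on these two rows. For bidegree reasons no nonzero differential $d_r$ with $r\ge 2$ enters or leaves $E_r^{1,0}$, whence $E_\infty^{1,0}=E_2^{1,0}$; and $E_\infty^{0,1}=\ker(d_2\colon E_2^{0,1}\to E_2^{2,0})$ for the same reason, where all three groups are determined by rows $q=0,1$. Since $d_2$ is natural under the comparison of sites, the edge-map extension
$$0\to E_\infty^{1,0}\to H^1(\tilde X_\dt,\OO^*)\to E_\infty^{0,1}\to 0$$
is sent isomorphically to its analytic counterpart, and the five-lemma concludes.

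The only real care required is in the degree bookkeeping for the spectral sequence differentials and in confirming that the sites map really does induce a morphism of spectral sequences abutting to the comparison map on cohomology; the former is routine once the indices are laid out, and the latter follows from choosing compatible $K$-injective resolutions as in the proof of the preceding proposition.
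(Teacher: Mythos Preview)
Your proof is correct and follows essentially the same approach as the paper: compare the Zariski and analytic descent spectral sequences via GAGA and compose with the $\eta$ of the preceding proposition. In fact your version is more careful than the paper's terse sketch: the paper simply asserts ``by GAGA, we have an isomorphism of $E_1$'s and therefore of abutments,'' whereas you correctly observe that GAGA only gives isomorphisms on rows $q=0,1$ (for $q\ge 2$ the Zariski groups $H^q_{\mathrm{zar}}(\tilde X_p,\OO^*)$ vanish on a smooth variety while the analytic ones need not) and then verify that these two rows suffice to control $H^1$ of the abutment.
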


  \begin{proof}
By standard arguments, we get a  map of spectral sequences

$$
\begin{array}{ccc}
 E_1^{pq}= H^q(\tilde X_p, \OO_{\tilde X_p}^*) & \Rightarrow & H^{p+q}(\tilde X_\dt,
\OO_{\tilde X_\dt}^*) \\ 
\downarrow & &\downarrow\\
 E_1^{pq}= H^q(\tilde X_{an,p}, \OO_{\tilde X_{an,p}}^*) & \Rightarrow & H^{p+q}(\tilde X_{an,\dt},
\OO_{\tilde X_{an,\dt}}^*)
\end{array}
$$
By  GAGA, we have an isomorphism of $E_1$'s and therefore of
abutments. The corollary follows from this and the proposition.
  \end{proof}

By weight $2$ Hodge cycles in $H^2(X,\Z(1))$ we simply mean the preimage of $F^1$ under the map
$H^2(X,\Z(1))\to H^2(X,\C)$. This includes all the torsion cycles.

\begin{prop}\label{prop:cH2M}
 There exists a natural homomorphism $c$ which fits into a commutative diagram
$$
\xymatrix{
 H_M^2(X,\Z(1))\ar[r]^{c}\ar[d] & H^2(X,\Z(1))\ar[d] \\ 
 Pic(\tilde X_0)\ar[r]^{c_1} & H^2(\tilde X_0,\Z(1))
}
$$
The image of $c$  lands in the space of weight 2 Hodge cycles.
\end{prop}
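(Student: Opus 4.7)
The plan is to construct $c$ using the exponential sequence on the simplicial resolution, and then to verify commutativity by naturality and the Hodge condition by identifying the image with the kernel of a map that cuts out $F^1$.

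First, I would exploit corollary~\ref{cor:HM}, which identifies $H^2_M(X,\Z(1))$ with $H^1(\tilde X_{an,\dt}, \OO^*_{\tilde X_{an,\dt}})$. On the simplicial analytic space $\tilde X_{an,\dt}$, I have the exponential short exact sequence of simplicial sheaves
$$0\to \Z(1)\to \OO_{\tilde X_{an,\dt}}\xrightarrow{\exp} \OO^*_{\tilde X_{an,\dt}}\to 0,$$
which yields a connecting homomorphism $\partial\colon H^1(\tilde X_{an,\dt}, \OO^*_{\tilde X_{an,\dt}})\to H^2(\tilde X_{an,\dt}, \Z(1))$. By cohomological descent for the proper hypercover $\tilde X_\dt \to X$, the natural map $H^2(X, \Z(1))\to H^2(\tilde X_{an,\dt}, \Z(1))$ is an isomorphism. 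I define $c$ as the composition of $\partial$ with the inverse of this isomorphism.

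For the commutativity of the square, I would observe that the augmentation $\tilde X_0 \to X$, viewed as a morphism from the constant simplicial scheme $\tilde X_0$ to $\tilde X_\dt$, induces a morphism of ringed topoi compatible with the exponential sequence. The induced map $H^1(\tilde X_{an,\dt},\OO^*)\to H^1(\tilde X_0,\OO^*) = \mathrm{Pic}(\tilde X_0)$ is exactly the left vertical map from the preceding corollary (which forgets the descent datum $\sigma$), and the induced map on $H^2(-,\Z(1))$ is pullback to $\tilde X_0$. Since $c_1$ on $\tilde X_0$ is itself the connecting homomorphism of the exponential sequence, naturality of boundary maps produces the commuting square.

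For the Hodge condition, I would use that any element in the image of $\partial$ maps to zero in $H^2(\tilde X_{an,\dt}, \OO_{\tilde X_{an,\dt}})$ by exactness. The key Hodge-theoretic input is Deligne's identification, coming from the stupid filtration on the simplicial de Rham complex $\Omega^\bullet_{\tilde X_\dt}$, of $H^2(\tilde X_{an,\dt},\OO_{\tilde X_{an,\dt}})$ with $H^2(X,\C)/F^1H^2(X,\C)$. Under this identification, the composition $H^2_M(X,\Z(1))\xrightarrow{c} H^2(X,\Z(1))\to H^2(X,\C)\to H^2(X,\C)/F^1$ is zero, so $c(\alpha)$ lies in the preimage of $F^1$, i.e.\ it is a weight $2$ Hodge cycle (torsion classes already map to $0\in F^1$, so they cause no trouble). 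The main obstacle will be justifying the identification $H^2(\tilde X_{an,\dt},\OO)\cong H^2(X,\C)/F^1$ precisely; this amounts to checking that the spectral sequence from the filtration $F^p = \Omega^{\geq p}_{\tilde X_\dt}$ computes Deligne's Hodge filtration on the mixed Hodge structure of $X$, which is standard but should be cited carefully from \cite{deligneH}.
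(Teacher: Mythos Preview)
Your proposal is correct and follows essentially the same route as the paper: identify $H^2_M(X,\Z(1))$ with $H^1(\tilde X_{an,\dt},\OO^*)$ via corollary~\ref{cor:HM}, define $c$ as the connecting map of the exponential sequence, and deduce the Hodge condition from the vanishing of the composite into $H^2(\tilde X_{an,\dt},\OO)$. The only difference is the reference for the identification $H^2(\tilde X_{an,\dt},\OO)\cong H^2(X,\C)/F^1$: the paper cites du Bois \cite[thm 4.5]{dubois}, whereas you appeal directly to Deligne's construction and $E_1$-degeneration in \cite{deligneH}; either citation is adequate.
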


\begin{proof}
By corollary \ref{cor:HM}, we can and will identify motivic cohomology with the first cohomology
of $\OO_{\tilde X_{an, \dt}}^*$.
We have the exponential sequence
$$0\to \Z(1)\to \OO_{\tilde X_{an, \dt}}\stackrel{\exp}{\to}\OO_{\tilde X_{an, \dt}}^*\to 1$$
which yields an exact sequence
$$H^1(\tilde X_{an,\dt},\OO_{\tilde X_{an,\dt}}^*)\stackrel{c}{\to} H^2(X,\Z(1))\to
H^2(\tilde X_{an,\dt}, \OO_{\tilde X_{an,\dt}})$$
This implies that
$$\im c\subseteq H^2(X,\Z(1))\cap \ker [H^2(X,\C)\to (\tilde X_{an,\dt}, \OO_{\tilde X_{an,\dt}})]$$
By \cite[thm 4.5]{dubois}, this can be identified with $ H^2(X,\Z(1))\cap F^1$, where the intersection
is understood as the preimage. 
\end{proof}

\begin{cor}
  The map $c\otimes \Q$ coincides with the map constructed in theorem \ref{thm:obstrvan}.
\end{cor}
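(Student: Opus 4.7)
The plan is to use lemma \ref{lemma:HCinj} to reduce the problem to showing that the two maps agree after projection to the pure quotient $\tilde H^2(X,\Q(1))$. Both maps are homomorphisms $H_M^2(X,\Q(1)) \to H^2(X,\Q(1))$ whose images consist of weight-$2$ Hodge cycles: for $c\otimes\Q$ this is proposition \ref{prop:cH2M}, and for the theorem \ref{thm:obstrvan} map this is the last sentence of that theorem. Their difference $\Delta$ therefore takes values in weight-$2$ Hodge cycles, and it suffices to show $\rho\circ\Delta = 0$, where $\rho\colon H^2(X,\Q(1)) \to \tilde H^2(X,\Q(1))$ is the projection, since lemma \ref{lemma:HCinj} then forces $\Delta = 0$.

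To compute $\rho$ composed with each map on a representative $(D,f)$ as in proposition \ref{prop:H2M}: for $c\otimes\Q$, the commutative square of proposition \ref{prop:cH2M} shows that $\rho\circ (c\otimes\Q)$ sends $(D,f)$ to $c_1(\OO_{\tilde X_0}(D))$, viewed inside $\tilde H^2(X,\Q(1))\hookrightarrow H^2(\tilde X_0,\Q(1))$. For the theorem \ref{thm:obstrvan} map, its composite with $\rho$ equals the cycle map \eqref{eq:cyclemap} by construction, i.e.\ the pullback $\pi^*\colon H_M^2(X,\Q(1))\to CH^1_{OP}(X)_\Q\subset CH^1(\tilde X_0)_\Q$ followed by the classical cycle map. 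I would then check using the functoriality in theorem \ref{thm:motivic}, applied to the morphism from the trivial GNPP resolution of $\tilde X_0$ to the fixed resolution $\tilde X_\dt$, that $\pi^*(D,f) = [D]$ in $CH^1(\tilde X_0)$; the function $f$ on $\tilde X_1$ is discarded along the way and only the divisor $D$ survives. Consequently both composites send $(D,f)$ to $c_1(\OO_{\tilde X_0}(D))$.

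This shows $\rho\circ\Delta = 0$, and lemma \ref{lemma:HCinj} then gives $\Delta = 0$, proving the corollary. The only slightly delicate point in this plan is the explicit verification that $\pi^*(D,f) = [D]$, which entails unwinding the identification of motivic cohomology with higher Chow groups on the smooth variety $\tilde X_0$ together with the functoriality of theorem \ref{thm:motivic}, but no serious obstacle is expected.
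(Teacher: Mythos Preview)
Your proposal is correct and follows essentially the same approach as the paper. The paper's proof is terser---it simply asserts that the two maps induce the same map to $\tilde H^2(X,\Q(1))$ and invokes lemma~\ref{lemma:HCinj}---whereas you spell out why that assertion holds by tracing both composites to $c_1(\OO_{\tilde X_0}(D))$; this added detail is fine and the ``slightly delicate point'' you flag about $\pi^*(D,f)=[D]$ is indeed routine.
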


\begin{proof}
  The two maps induce the same map to $\tilde H^2(X,\Q(1))$, so they must
  coincide by lemma ~\ref{lemma:HCinj}.
\end{proof}

We now come to the main result of this section.

\begin{thm}\label{thm:Lef}
  Given a projective variety $X$,  the image of the  map
  $$c:H_M^2(X, \Z(1))\to H^2(X,\Z(1)),$$
is precisely the space of weight $2$ Hodge cycles. 
  \end{thm}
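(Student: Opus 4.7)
My plan is to observe that the theorem follows essentially for free from Proposition~\ref{prop:cH2M}, provided one reads the exact sequence obtained from the exponential sequence with a little more care than was needed there. The inclusion $\im c\subseteq \{$weight $2$ Hodge cycles$\}$ is already recorded in Proposition~\ref{prop:cH2M}, so what remains is the reverse inclusion: every class $\alpha\in H^2(X,\Z(1))$ whose image in $H^2(X,\C)$ lies in $F^1$ must come from motivic cohomology.

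The plan is to exploit the fact that in the proof of Proposition~\ref{prop:cH2M} the relevant sequence is not just a complex but honestly exact. Namely, on the simplicial analytic site the exponential sequence
$$0\to \Z(1)\to \OO_{\tilde X_{an,\dt}}\stackrel{\exp}{\to}\OO_{\tilde X_{an,\dt}}^*\to 1$$
yields a long exact sequence, and cohomological descent for the GNPP resolution $\tilde X_\dt\to X$ together with Corollary~\ref{cor:HM} identifies the relevant terms, giving an \emph{exact} sequence
$$H^1(\tilde X_{an,\dt},\OO_{\tilde X_{an,\dt}}^*)\stackrel{c}{\to} H^2(X,\Z(1))\stackrel{e}{\to} H^2(\tilde X_{an,\dt}, \OO_{\tilde X_{an,\dt}}).$$
Exactness at the middle term gives $\im c=\ker e$, which is the exact identity I want (in Proposition~\ref{prop:cH2M} only the inclusion was exploited).

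Next I would match $\ker e$ with the group of weight $2$ Hodge cycles. Since the sheaf map $\Z(1)\to \OO_{\tilde X_{an,\dt}}$ factors through the constant sheaf $\C$, the map $e$ factors as
$$H^2(X,\Z(1))\to H^2(X,\C)\stackrel{e'}{\to} H^2(\tilde X_{an,\dt}, \OO_{\tilde X_{an,\dt}}).$$
By Du Bois's theorem \cite[thm 4.5]{dubois}, $\ker e'=F^1 H^2(X,\C)$. Consequently $\ker e$ is the preimage of $F^1$ under $H^2(X,\Z(1))\to H^2(X,\C)$, which is the definition of the space of weight $2$ Hodge cycles (torsion classes automatically being Hodge in this convention). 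Combining with the equality $\im c=\ker e$ from the previous step finishes the proof.

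The main thing to be careful about is the passage from the Zariski simplicial sheaf cohomology of Corollary~\ref{cor:HM} to its analytic counterpart and the validity of cohomological descent for $\Z(1)$ on $\tilde X_{an,\dt}$; both are standard, the former by GAGA as in the proof of that corollary, and the latter because a GNPP resolution provides a proper hypercover and hence satisfies cohomological descent in the analytic topology as well. With these in place, the theorem reduces to the two formal steps above.
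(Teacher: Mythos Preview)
Your proposal is correct and is exactly the paper's ``First Proof'': the paper simply remarks that the exact sequence already obtained in the proof of Proposition~\ref{prop:cH2M} gives $\im c = H^2(X,\Z(1))\cap F^1$ rather than merely an inclusion, which is precisely the observation you spell out. Your version is just a more detailed rendering of that one-line argument.
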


Although the  proof is almost immediate, we give a second proof for
surfaces  which although longer  gives more geometric insight.

\begin{proof}[First Proof]
  The proof of proposition \ref{prop:cH2M} actually shows that
$\im c=H^2(X,\Z(1))\cap F^1$.
But this is exactly what we want to prove.
\end{proof}

\begin{proof}[Second Proof]
For this proof, we will assume that $X$ is a surface.
Let $\alpha\in \tilde H^2(X,\Z)$ be the image of  Hodge cycle.
 By proposition \ref{prop:carlson},  
$ \alpha$ corresponds to a divisor $D$ on $\tilde X_0$ whose class in 
$$P(\tilde X_\dt) = Pic^0(\tilde X_0)\backslash Div_h(\tilde
X_1)/R_\partial(\tilde X_1)^*$$
is zero.  In more explicit terms, this means that after translating
$D$ by element of $Pic^0(\tilde X_0)$, we can assume that
$\partial D= (f)$ for some $f\in R_\partial(\tilde X_1)^*$. Recall
that the last condition means that $\partial f=\partial g$, for some
locally constant function $g$. After replacing $f$ by $fg^{-1}$, the relations \eqref{eq:cobound} hold for $(D,f)$.
This gives a class
in $H_M^2(X,\Z(1))$ which maps onto $ \alpha$.
\end{proof}

\section{Cohomological Hodge conjecture for singular varieties}

Extrapolating from theorem \ref{thm:Lef}, suggests the following conjecture:

\begin{con}\label{con:hodge}
Let $X$ be a  complex projective variety be defined over $\overline{\Q}$. Then
every weight $2p$ Hodge cycle in $H^{2p}(X,\Q(p))$
lies in the image of the map from $H_M^{2p}(X,\Q(p))$ constructed in theorem \ref{thm:obstrvan}.
\end{con}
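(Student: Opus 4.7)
The plan is to reduce conjecture~\ref{con:hodge} to two much-studied inputs: the classical Hodge conjecture applied to the smooth components of a GNPP resolution, and a Bloch--Beilinson-type statement over $\overline{\Q}$ that controls the higher obstructions arising in the motivic spectral sequence \eqref{eq:motss}. First I would fix a GNPP resolution $\tilde X_\dt\to X$ defined over $\overline{\Q}$, which is available since $X$ is. By Theorem~\ref{thm:motivic}, a class in $H^{2p}_M(X,\Q(p))$ is represented by a compatible system $(\alpha_0,\alpha_1,\ldots)$ with $\alpha_a\in Z^p(\tilde X_a, a)_\Q$ satisfying $\delta\alpha_0=0$, $\partial\alpha_0=\delta\alpha_1$, $\partial\alpha_1=\pm\delta\alpha_2$, and so on, modulo boundaries of the Hanamura total complex.

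Given a weight $2p$ Hodge cycle $h\in H^{2p}(X,\Q(p))$, I would begin by studying its image $\bar h\in\tilde H^{2p}(X,\Q(p))$. Via the inclusion $\tilde H^{2p}(X)\hookrightarrow H^{2p}(\tilde X_0)$ of Proposition~\ref{prop:HX}, the class $\bar h$ becomes a Hodge cycle on the smooth projective $\overline{\Q}$-variety $\tilde X_0$. The classical Hodge conjecture on $\tilde X_0$ then supplies an algebraic cycle $\alpha_0\in CH^p(\tilde X_0)_\Q$ with cycle class $\bar h$; by construction $\partial\alpha_0$ is homologically trivial on $\tilde X_1$, so $\alpha_0$ survives past the first differential in~\eqref{eq:motss}.

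The next step is to solve the sequence of coboundary equations $\partial\alpha_{a-1}=\pm\delta\alpha_a$ inductively for $a\ge 1$. At stage $a$ the obstruction lives in an appropriate subquotient of $CH^p(\tilde X_a, a-1)$ consisting of classes that are homologically trivial. Using the explicit current formulas of Kerr--Lewis--M\"uller-Stach recalled just before Proposition~\ref{prop:klm}, the Hodge-theoretic content of each such obstruction is an iterated Abel--Jacobi invariant in a quotient of a Griffiths intermediate Jacobian of $\tilde X_a$. A calculation dual to the one in the proof of Theorem~\ref{thm:obstrvan}, where the currents $A_a$, $B_a$, $C_a$ are read backwards, should show that if the original class $h$ is a Hodge cycle then each Hodge-theoretic obstruction vanishes in the relevant intermediate Jacobian.

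The main obstacle is the last promotion: upgrading vanishing of the Hodge-theoretic obstruction to vanishing of the actual obstruction class in the higher Chow group. This is exactly the prediction of Bloch--Beilinson that for smooth projective varieties over $\overline{\Q}$ the (regulator or) Abel--Jacobi map has no spurious kernel on the graded pieces of the conjectural motivic filtration on higher Chow groups. For $a=0$ the required injectivity is the classical Lefschetz $(1,1)$ theorem, recovered here as Theorem~\ref{thm:Lef}; for $a\ge 1$ almost nothing is known unconditionally, and even the case $p=2$, $a=1$ requires controlling kernels of higher Abel--Jacobi maps on $CH^2(\tilde X_1,1)$ over $\overline{\Q}$. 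Consequently any unconditional proof seems to require substantial new input of Bloch--Beilinson type, which is consistent with the author's framing of the statement as a conjecture rather than a theorem, and with the fact that verification is currently limited to $p=1$, to products thereof, and to special cases such as the elliptic modular fibre products treated in the last section.
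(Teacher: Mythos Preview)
This statement is a \emph{conjecture} in the paper, not a theorem; the paper offers no proof. What you have written is not a proof either but an outline of a conditional strategy, and you correctly acknowledge this at the end. So the appropriate comparison is between your sketch and the paper's own discussion following the conjecture.

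The paper is more guarded than you are. Assuming the Hodge conjecture for $\tilde X_0$ and the standard Bloch--Beilinson conjecture (injectivity of $CH^p(\tilde X_1)_\Q\to$ rational Deligne cohomology) for $\tilde X_1$, the paper shows only that one can choose $\alpha_0$ with $\partial\alpha_0=0$ in $CH^p(\tilde X_1)_\Q$; this settles the conjecture when $\dim\tilde X_2<p-1$, but the author explicitly writes that ``this does not appear to be enough to imply conjecture~\ref{con:hodge}'' in general. Your sketch goes further by positing that the obstruction at every stage $a\ge 1$ is governed by an analogous injectivity statement for the \emph{higher} Chow groups $CH^p(\tilde X_a,a-1)$, which is a much stronger hypothesis than Bloch--Beilinson as usually stated, and the paper makes no such claim. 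The sentence ``a calculation dual to the one in the proof of Theorem~\ref{thm:obstrvan}\ldots should show that if $h$ is a Hodge cycle then each Hodge-theoretic obstruction vanishes'' is the real gap: Proposition~\ref{prop:epsilon1} identifies only the first obstruction $\varepsilon_1$ with an Abel--Jacobi image, and neither the paper nor your outline supplies an argument that the Hodge-cycle hypothesis on $h$ forces the higher regulator images on $CH^p(\tilde X_a,a-1)$ to vanish, let alone that such vanishing lifts to vanishing in the Chow group. Finally, your remark that the $a=0$ step is Theorem~\ref{thm:Lef} is misplaced: that theorem is the case $p=1$ of the full conjecture, whereas the initial step for general $p$ is precisely the Hodge conjecture on $\tilde X_0$, as you yourself invoke two paragraphs earlier.
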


Note that, unlike the usual Hodge conjecture, this is easy to falsify when the condition of being defined over
$\overline{\Q}$ is dropped.  Suppose that $X$ is a union of two smooth components $X_1\cup X_2$
meeting transversally along a smooth variety $Y$. Then we can compute
motivic cohomology using the  resolution
$$ Y\rightrightarrows \tilde X= X_1\coprod X_2\to X$$
by remark \ref{rmk:ncd}.
As soon as we can find an algebraic cycle  $\alpha$ on $\tilde X$ such that $\partial \alpha$
is nonzero  in $CH^*(Y)_\Q$ but zero in the rational Deligne cohomology of $Y$ (or
equivalently both homologically trivial and trivial in the intermediate Jacobian tensor $\Q$),
then we get a counterexample. An explicit example was found by Bloch.

\begin{ex}[Bloch, {\cite[appendix 1]{jannsen}}] Let $S\subset \PP^3$ be a
  smooth surface of degree $\ge 4$ and $Y= Bl_x S$ the blow up of $S$ at a very general
  point $x\in Y$. Then the union $X = Bl_x\PP^3\coprod_{ Y}
  Bl_x\PP^3$ carries a codimension $2$ cycle $\alpha$ as above.
\end{ex}

Bloch and Beilinson \cite[lemma 5.6]{beilinson} have conjectured that when $Y$ is a smooth
projective variety over $\overline{\Q}$, the cycle map from $CH^*(Y)_\Q$ to rational Deligne
cohomology is injective. We can easily see that:

\begin{prop}
  Assuming the usual Hodge conjecture and the Bloch-Beilinson
  conjecture, any  weight $2p$ Hodge cycle in $H^{2p}(X,\Q(p))$
on a projective variety $X$ defined over $\overline{\Q}$ is represented by an algebraic cycle 
$\alpha_0$ on $\tilde X_0$ such that $\partial \alpha_0=0$ in $CH^p(\tilde X_1)_\Q$, for  any
GNPP resolution $\tilde X_\dt$  defined over $\overline{\Q}$.  In
particular, conjecture \ref{con:hodge} holds if  in addition $\tilde X_2=\emptyset$
or more generally  if $\dim \tilde X_2<p-1$
\end{prop}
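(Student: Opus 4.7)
The plan is to produce $\alpha_0$ by combining HC on $\tilde X_0$ with BB on $\tilde X_1$, and then to deduce Conjecture~\ref{con:hodge} in the given cases by checking that no further obstructions arise in Hanamura's double complex. First, HC on the smooth projective $\overline{\Q}$-variety $\tilde X_0$ gives some $\alpha_0^{(0)}\in CH^p(\tilde X_0)_\Q$ with $[\alpha_0^{(0)}]=\pi^*\gamma$ in $H^{2p}(\tilde X_0,\Q(p))$. The coaugmentation identity $\pi\circ p_0=\pi\circ p_1$ forces $\partial\pi^*\gamma=0$, so $\partial\alpha_0^{(0)}$ is homologically trivial on $\tilde X_1$, and BB on $\tilde X_1$ (injectivity of $AJ\colon CH^p_{hom}(\tilde X_1)_\Q\hookrightarrow J^p(\tilde X_1)_\Q$) reduces $\partial\alpha_0=0$ in $CH^p(\tilde X_1)_\Q$ to killing the Abel--Jacobi class $\nu:=AJ(\partial\alpha_0^{(0)})$ by a modification of $\alpha_0^{(0)}$ within $CH^p_{hom}(\tilde X_0)_\Q$.

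Since $\gamma$ itself is a Hodge cycle, the obstruction $\varepsilon(\gamma)=0$ in $Ext^1_{MHS}(\Q(-p),W_{2p-1}H^{2p}(X))$, hence $\varepsilon_1(\gamma)=0$ by Proposition~\ref{prop:epsilon1}. When $\tilde X_2=\emptyset$ one has $W_{2p-1}=Gr^W_{2p-1}$ and $JGr^W_{2p-1}H^{2p}(X)=\coker[\partial\colon J^p(\tilde X_0)_\Q\to J^p(\tilde X_1)_\Q]$, so $\varepsilon_1=0$ says precisely that $\nu=\partial\eta$ for some $\eta\in J^p(\tilde X_0)_\Q$; explicitly, choosing any Deligne lift $\tilde\gamma\in H^{2p}_D(X,\Q(p))$ of $\gamma$ (which exists because $\gamma$ is Hodge), one may take $\eta=[\alpha_0^{(0)}]_D-\pi^*\tilde\gamma$, using that $\partial\pi^*\tilde\gamma=0$.

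The main obstacle is to show that this $\eta$ can be chosen of the form $AJ(\beta)$ with $\beta\in CH^p_{hom}(\tilde X_0)_\Q$; granting this, $\alpha_0:=\alpha_0^{(0)}-\beta$ represents $\pi^*\gamma$ and satisfies $AJ(\partial\alpha_0)=\nu-\partial AJ(\beta)=0$, so $\partial\alpha_0=0$ by BB on $\tilde X_1$. This algebraicity step is where BB on $\tilde X_0$ enters: one compares the cohomological descent spectral sequences for Deligne and motivic cohomology of $X$, using that for $\tilde X_2=\emptyset$ the Deligne spectral sequence degenerates at $E_2$ and identifies $\ker[\partial\colon H^{2p}_D(\tilde X_0,\Q(p))\to H^{2p}_D(\tilde X_1,\Q(p))]$ with $\pi^*H^{2p}_D(X,\Q(p))$, while BB makes the cycle maps termwise injective on $E_1$. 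Matching the algebraic and transcendental Deligne lifts of $\pi^*\gamma$ is the crux of the proof.

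Finally, once $\partial\alpha_0=0$ in $CH^p(\tilde X_1)_\Q$ is arranged, choose $\alpha_1\in Z^p(\tilde X_1,1)$ with $\delta\alpha_1=\pm\partial\alpha_0$. When $\tilde X_2=\emptyset$ the tuple $(\alpha_0,\alpha_1,0,\ldots)$ is automatically a cocycle in Hanamura's double complex, and when $\dim\tilde X_2<p-1$ the spaces $Z^p(\tilde X_i,i-1)$ vanish for $i\ge 2$ on dimensional grounds, so the same tuple is a cocycle. It defines $\mu\in H^{2p}_M(X,\Q(p))$; by Theorem~\ref{thm:obstrvan}, $c(\mu)$ is a weight $2p$ Hodge cycle equal to $\gamma$ modulo $W_{2p-1}H^{2p}(X,\Q(p))$, so $c(\mu)-\gamma$ is a Hodge cycle of weight $2p$ inside $W_{2p-1}$, forcing $c(\mu)=\gamma$ by strictness of the weight filtration.
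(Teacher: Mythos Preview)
Your overall strategy matches the paper's: use the Hodge conjecture on $\tilde X_0$ to produce a representative $\alpha_0$, modify it by a homologically trivial cycle so that $AJ(\partial\alpha_0)=0$, invoke Bloch--Beilinson on $\tilde X_1$ to conclude $\partial\alpha_0=0$ in $CH^p(\tilde X_1)_\Q$, and then build the motivic class in Hanamura's double complex. Your final paragraph (choosing $\alpha_1$ with $\delta\alpha_1=\partial\alpha_0$, observing $\partial\alpha_1=0$ when $\dim\tilde X_2<p-1$, and concluding $c(\mu)=\gamma$ via Lemma~\ref{lemma:HCinj}) is exactly what the paper does.

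There is, however, a genuine gap at the modification step, which you yourself flag as ``the crux'' without resolving it. You correctly deduce from $\varepsilon_1(\gamma)=0$ that $\nu=\partial\eta$ for some $\eta\in J^p(\tilde X_0)_\Q$, but you then need $\eta$ to be of the form $AJ(\beta)$ for some $\beta\in CH^p_{hom}(\tilde X_0)_\Q$, and your proposal to handle this via ``BB on $\tilde X_0$'' plus a comparison of Deligne and motivic descent spectral sequences is only sketched, not carried out. Note that Bloch--Beilinson asserts \emph{injectivity} of $CH^p_{hom}(\tilde X_0)_\Q\to J^p(\tilde X_0)_\Q$, not surjectivity, so it is not clear how invoking it on $\tilde X_0$ produces the required $\beta$; the paper's proof is also terse here (it simply cites Proposition~\ref{prop:epsilon1} and asserts the modification is possible, using only HC on $\tilde X_0$ and BB on $\tilde X_1$), but it does not introduce the extra Deligne-cohomology machinery you do. There is also a structural mismatch: the first assertion of the proposition is for an \emph{arbitrary} GNPP resolution over $\overline{\Q}$, yet your whole discussion of $\eta$ and the spectral-sequence comparison explicitly assumes $\tilde X_2=\emptyset$ (you use $W_{2p-1}=Gr^W_{2p-1}$ and a two-column Deligne spectral sequence). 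That hypothesis is supposed to enter only in the \emph{second} assertion, to force $\partial\alpha_1=0$; as written your argument does not establish the first assertion in general.
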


\begin{proof}
The argument will use the validity of the Hodge conjecture for$\tilde X_0$ and
Bloch-Beilinson for $\tilde X_1$.
A weight $2p$ Hodge cycle on $H^{2p}(X)$ pulls back to an algebraic
cycle $\alpha_0$ on $\tilde X_0$ such that $\partial \alpha_0$ is
homologically trivial.  By proposition \ref{prop:epsilon1}, we can
assume that $\partial \alpha_0 =0$ in $J(\tilde X_1)_\Q$ after modifying
$\alpha_0$ by  adding a
 homologically trivial cycle to it. Thus $\partial \alpha_0=0$ in the
 Chow group. So we can find a higher cycle $\alpha_1$ on $\tilde X_1$
 such that $\delta\alpha_1=\alpha_0$. If $\dim \tilde X_2<p-1$,
 then  trivially $\partial \alpha_1=0$ in Bloch's complex for     $\tilde X_2$. So $\alpha_\dt$
 determines a motivic class.
\end{proof}

While this does not appear to be enough to imply conjecture \ref{con:hodge},
 at the very least, this would appear to rule out ``easy''
counterexamples of the above type over $\overline{\Q}$.

We  state some basic criteria for the conjecture to hold. There are a few
easy cases that can be reduced to the usual Hodge conjecture.

\begin{lemma}
Given a blow up square \eqref{eq:rs}, we have a commutative diagram
with exact rows
  $$
\xymatrix{
& H_M^{2p}(X,\Q(p))\ar[r]\ar^{\alpha}[d] & H_M^{2p}(\tilde X,\Q(p))\oplus H_M^{2p}(Z,\Q(p)) \ar[r]\ar^{\beta}[d] & H_M^{2p}(E,\Q(p))\ar^{\gamma}[d] \\ 
0\ar[r]& \tilde H^{2p}(X,\Q(p))\ar[r] & H^{2p}(\tilde X,\Q(p))\oplus H^{2p}(Z,\Q(p)) \ar[r] & H^{2p}(E,\Q(p))
}
$$
If $\beta$ is surjective and $\gamma$ is injective then $\alpha$ is surjective.
\end{lemma}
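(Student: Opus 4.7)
The plan is a standard diagram chase, once the exactness of both rows is pinned down. First I would argue exactness of the top row: this is immediate from proposition \ref{prop:cdh}, since the Mayer-Vietoris sequence in motivic cohomology associated to the blow up square \eqref{eq:rs} contains the displayed three-term segment (and we do not need injectivity on the left, which is why there is no leading $0$ in the top row).

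Next I would verify exactness of the bottom row. The injectivity of $\tilde H^{2p}(X,\Q(p))\to H^{2p}(\tilde X,\Q(p))\oplus H^{2p}(Z,\Q(p))$ is the content of lemma~\ref{lemma:W}: by definition $\tilde H^{2p}(X)=H^{2p}(X)/W_{2p-1}H^{2p}(X)$ and $W_{2p-1}H^{2p}(X)=\ker[H^{2p}(X)\to H^{2p}(\tilde X)]$ since $\tilde X$ is a nonsingular alteration, so already $\tilde H^{2p}(X)\hookrightarrow H^{2p}(\tilde X)$. Exactness at the middle follows from the classical Mayer-Vietoris long exact sequence for the blow up square: any pair $(a,b)\in H^{2p}(\tilde X)\oplus H^{2p}(Z)$ mapping to zero in $H^{2p}(E)$ lifts to some class in $H^{2p}(X)$, whose image in $\tilde H^{2p}(X)$ gives the required preimage.

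With both rows in place, the surjectivity of $\alpha$ follows by a four-lemma-style chase. Given $y\in \tilde H^{2p}(X,\Q(p))$, send it down-right to $(a,b)\in H^{2p}(\tilde X,\Q(p))\oplus H^{2p}(Z,\Q(p))$; by the complex property of the bottom row the image of $(a,b)$ in $H^{2p}(E,\Q(p))$ is zero. Use surjectivity of $\beta$ to lift $(a,b)$ to $(A,B)\in H_M^{2p}(\tilde X,\Q(p))\oplus H_M^{2p}(Z,\Q(p))$. By commutativity of the right square, the image of $(A,B)$ in $H_M^{2p}(E,\Q(p))$ maps under $\gamma$ to zero, and since $\gamma$ is injective, $(A,B)$ lies in the kernel of the top horizontal arrow. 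By exactness of the top row, $(A,B)$ is the image of some $Y\in H_M^{2p}(X,\Q(p))$. Finally, $\alpha(Y)$ and $y$ have the same image $(a,b)$ in $H^{2p}(\tilde X,\Q(p))\oplus H^{2p}(Z,\Q(p))$, so by the injectivity noted above, $\alpha(Y)=y$.

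The only step requiring any care beyond bookkeeping is the exactness of the bottom row, specifically the injectivity on the left, which rests entirely on the fact that $\tilde H^{2p}$ is built to embed into the cohomology of a smooth alteration. Everything else is formal.
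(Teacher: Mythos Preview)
Your proof is correct and follows essentially the same route as the paper's: the paper cites proposition~\ref{prop:cdh} for the top row, declares the bottom row ``standard,'' and invokes the five lemma for the surjectivity of $\alpha$, whereas you make each of these steps explicit (identifying the injectivity on the left via lemma~\ref{lemma:W} and spelling out the diagram chase). A minor remark: your chase only uses that the bottom row is a \emph{complex} at the middle term, not full exactness there, so that verification is superfluous for the argument as written.
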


\begin{proof}
  The exactness of the top row is  proposition \ref{prop:cdh}, for the
  bottom this is standard, and the commutativity follows from
  functoriality of the cycle map.  The last statement is a consequence
  of the five lemma.
\end{proof}

\begin{cor}
  Suppose that  $X$ has isolated singularities, and possess a resolution $\tilde
  X$ for which the Hodge conjecture holds and such that the
  exceptional divisor $E$ is smooth and has a cellular decomposition
  (in the sense of \cite[ex 19.1.11]{fulton}). Then 
  conjecture~\ref{con:hodge} holds for $X$.
So in particular, the conjecture holds for a cone over a cellular variety.
\end{cor}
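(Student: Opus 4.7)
The plan is to apply the preceding lemma with $Z=\operatorname{Sing}(X)$, which is a finite set of points since the singularities are isolated, with $\tilde X$ the given resolution and $E$ the exceptional divisor; these fit into a blow up square because $\tilde X\to X$ is an isomorphism over $X-Z$. I would then verify the two hypotheses of the lemma---surjectivity of $\beta$ and injectivity of $\gamma$ on the Hodge cycle level---and finally upgrade the resulting surjectivity onto Hodge cycles in $\tilde H^{2p}(X,\Q(p))$ to surjectivity onto Hodge cycles in $H^{2p}(X,\Q(p))$.

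For the $Z$ summand, since $Z$ is zero-dimensional both $H_M^{2p}(Z,\Q(p))$ and $H^{2p}(Z,\Q(p))$ vanish for $p\ge 1$ (and the statement is trivial for $p=0$), so the $Z$-contribution to $\beta$ is automatic. For the $\tilde X$ summand, the hypothesis that the Hodge conjecture holds for the smooth projective variety $\tilde X$ says exactly that $CH^p(\tilde X)_\Q = H_M^{2p}(\tilde X,\Q(p))$ surjects onto the weight $2p$ Hodge cycles in $H^{2p}(\tilde X,\Q(p))$. Combined, $\beta$ surjects onto Hodge cycles.

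For $\gamma$, I would invoke the well-known fact that for a smooth projective variety $E$ with a cellular decomposition, the Chow groups are freely generated by the closures of cells and the cycle class map $CH^p(E)\to H^{2p}(E,\Z)$ is an isomorphism. Since $E$ is smooth we have $H_M^{2p}(E,\Q(p)) = CH^p(E)_\Q$, so $\gamma$ is actually an isomorphism after tensoring with $\Q$, and in particular injective on the Hodge cycles. Applying the lemma then yields surjectivity of $H_M^{2p}(X,\Q(p))$ onto the weight $2p$ Hodge cycles in $\tilde H^{2p}(X,\Q(p))$.

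To finish, I would use lemma~\ref{lemma:HCinj}: any Hodge cycle $h\in H^{2p}(X,\Q(p))$ has an image $\bar h$ in $\tilde H^{2p}(X,\Q(p))$ which is again a Hodge cycle, hence $\bar h = c(m)\bmod W_{2p-1}$ for some $m\in H_M^{2p}(X,\Q(p))$ by what we just proved. By theorem~\ref{thm:obstrvan}, $c(m)$ is itself a Hodge cycle in $H^{2p}(X,\Q(p))$ with the same image $\bar h$, so the injectivity of lemma~\ref{lemma:HCinj} forces $c(m)=h$. For the cone statement, given smooth projective cellular $Y\subset \PP^n$, the projective cone $X$ has only the vertex as singular point; blowing up the vertex produces a $\PP^1$-bundle over $Y$ (concretely $\PP(\OO_Y\oplus\OO_Y(1))$), which is again cellular and therefore trivially satisfies the Hodge conjecture, and whose exceptional divisor is $Y$, which is cellular by hypothesis. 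The main (minor) obstacle is simply making sure the ``Hodge cycle'' interpretation of the lemma's surjectivity statement is used consistently and that the cellularity input is invoked in the right form; no deeper ingredient seems to be needed.
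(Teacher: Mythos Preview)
Your proposal is correct and is exactly the argument the paper intends: the corollary is stated immediately after the lemma with no separate proof, so the paper is implicitly applying that lemma to the blow up square with $Z=\operatorname{Sing}(X)$, $\tilde X$ the given resolution, and $E$ the exceptional divisor. Your verification of the hypotheses (Hodge conjecture on $\tilde X$ for the $\beta$-surjectivity, cellularity of $E$ for the $\gamma$-injectivity, triviality of the $Z$-summand), your explicit upgrade from $\tilde H^{2p}$ to $H^{2p}$ via lemma~\ref{lemma:HCinj} and theorem~\ref{thm:obstrvan}, and your unpacking of the cone case are all the details the paper leaves to the reader; you are also right that the lemma's ``$\beta$ surjective'' must be read as surjectivity onto Hodge cycles, since that is all the Hodge conjecture for $\tilde X$ provides and all that the five-lemma argument needs once one restricts the bottom row to $Hom_{MHS}(\Q(-p),-)$.
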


\begin{lemma}\label{lemma:funM}
  If $f:X\to Y$ is a map of projective varieties, and $\alpha\in
  H^{2p}(Y,\Q(p))$ lies in the image of $H_M^{2p}(Y,\Q(p))$, then
  $f^*\alpha$ lies in the image of $H_M^{2p}(X,\Q(p))$. In 
  particular, the conclusion holds if $Y$ is smooth and $\alpha$ is algebraic.
\end{lemma}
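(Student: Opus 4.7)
The plan is to deduce functoriality of the cycle map $c$ from the naturality statement in theorem~\ref{thm:motivic} combined with the explicit construction of $c$ in theorem~\ref{thm:obstrvan}. Given $f:X\to Y$, I would first produce GNPP resolutions $\tilde Y_\dt\to Y$ and $\tilde X_\dt\to X$ together with a simplicial morphism $\tilde f_\dt:\tilde X_\dt\to \tilde Y_\dt$ covering $f$. This can be done inductively in the spirit of proposition~\ref{prop:gnpp}: start with $\tilde Y_\dt$, pull back to $X$ along $f$, and at each stage refine by composing with further alterations so that the resulting simplicial scheme over $X$ is again a GNPP resolution and maps to $\tilde Y_\dt$.

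Once such compatible resolutions are in place, theorem~\ref{thm:motivic} supplies a pullback homomorphism $f^*:H_M^{2p}(Y,\Q(p))\to H_M^{2p}(X,\Q(p))$. Choose $\beta\in H_M^{2p}(Y,\Q(p))$ with $c(\beta)=\alpha$, and set $\beta':=f^*\beta$. It remains to check that $c(\beta')=f^*\alpha$, i.e.\ that $c$ is natural with respect to $\tilde f_\dt$. Going back to the construction in the proof of theorem~\ref{thm:obstrvan}, $\beta$ is represented by a compatible system of admissible cycles $\beta_a\in Z^p(\tilde Y_a,a)'$, and $c(\beta)$ is read off from the KLM currents $(B_a)$ on $\tilde Y_a$. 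After possibly enlarging the distinguished subcomplexes so that the pullback $\tilde f_a^*\beta_a$ is again admissible on $\tilde X_a$ (which is possible by the moving lemma for admissible cycles cited at the end of \S5), lemma~\ref{lemma:pushpull} applies to the squares formed by face maps and $\tilde f_a$, and yields $\tilde f_a^*B(\beta_a)=B(\tilde f_a^*\beta_a)$ (and likewise for $A$ and $C$). Hence the current representative of $c(f^*\beta)$ is the pullback of the current representative of $c(\beta)$, so $c(f^*\beta)=f^*c(\beta)=f^*\alpha$.

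For the second assertion, when $Y$ is smooth one may take the trivial GNPP resolution $\tilde Y_0=Y$, $\tilde Y_i=\emptyset$ for $i>0$. Then theorem~\ref{thm:motivic} degenerates to the identification $H_M^{2p}(Y,\Q(p))\cong CH^p(Y)_\Q$ recorded after the definition of motivic cohomology, under which $c$ becomes the usual cycle map on Chow groups. In particular every algebraic class $\alpha$ on $Y$ lies in the image of $c$, and the first part then gives $f^*\alpha$ in the image of $H_M^{2p}(X,\Q(p))$.

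The only real obstacle is the naturality of $c$ on the level of currents. Everything else is formal, but to apply lemma~\ref{lemma:pushpull} one must arrange that for each face map $p_b$ and for $\tilde f_a$, the admissibility and mild-singularity hypotheses hold simultaneously. This is handled by moving the $\beta_a$ inside a sufficiently rich distinguished subcomplex --- exactly the kind of choice that theorem~\ref{thm:motivic} allows us to make without changing the class $\beta$.
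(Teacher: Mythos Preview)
Your argument is correct and is, in effect, a careful unpacking of the paper's own proof, which consists of the single sentence ``This follows from naturality of the cycle map.'' The paper treats the functoriality of $c:H_M^{2p}(-,\Q(p))\to H^{2p}(-,\Q(p))$ as already established by the combination of theorem~\ref{thm:motivic} (naturality of the Hanamura model) and the explicit KLM construction in theorem~\ref{thm:obstrvan}; you have simply written out what that entails --- building compatible GNPP resolutions, moving to a distinguished subcomplex on which pullback of cycles is defined, and invoking lemma~\ref{lemma:pushpull} to compare the pulled-back currents. One small caveat: lemma~\ref{lemma:pushpull} is literally stated for currents with mild singularities (locally $L^1$ forms), whereas the $B$-currents are integration currents over semialgebraic chains; but the paper itself makes the same identification in the proof of theorem~\ref{thm:obstrvan} (the ``etc.''\ after $p_b^*A_a=A(p_b^*\alpha)$), so you are not introducing any gap beyond what is already implicit there.
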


\begin{proof}
  This follows from naturality of the cycle map.
\end{proof}

\begin{lemma}\label{lemma:prodM}
The image of $H_M^{2*}(X,\Q(*))$ in $H^{2*}(X,\Q(*))$ forms a subalgebra.
\end{lemma}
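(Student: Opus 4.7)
The plan is to show that $c:H_M^{2*}(X,\Q(*))\to H^{2*}(X,\Q(*))$ is a ring homomorphism; the subalgebra statement is then immediate since $H_M^{2*}(X,\Q(*))$ carries a product (item (4) after the definition of motivic cohomology).

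First I would fix a GNPP resolution $\pi_\dt:\tilde X_\dt\to X$ and consider the composite
$$\Phi:H_M^{2*}(X,\Q(*))\xrightarrow{\pi^*}H_M^{2*}(\tilde X_0,\Q(*))\cong CH^*(\tilde X_0)_\Q\to H^{2*}(\tilde X_0,\Q(*)).$$
By naturality of the motivic product (theorem~\ref{thm:motivic}) and the classical fact that the cycle map on a smooth projective variety is a ring homomorphism, $\Phi$ is multiplicative. Its image lands in $\tilde H^{2*}(X)\subseteq H^{2*}(\tilde X_0)$, which is a subring of $H^{2*}(\tilde X_0)$, and the construction of $c$ in theorem~\ref{thm:obstrvan} shows that $\Phi$ factors as $c$ composed with the projection $H^{2*}(X,\Q(*))\twoheadrightarrow \tilde H^{2*}(X,\Q(*))$.

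Given this factorization, take $\alpha\in H_M^{2p}(X,\Q(p))$ and $\beta\in H_M^{2q}(X,\Q(q))$. Both $c(\alpha\cdot\beta)$ and $c(\alpha)\cup c(\beta)$ are rational classes lying in $F^{p+q}H^{2(p+q)}(X,\C)$: the former by theorem~\ref{thm:obstrvan}, the latter because the Hodge filtration is multiplicative under cup product. Since $X$ is projective, $W_{2(p+q)}H^{2(p+q)}(X)=H^{2(p+q)}(X)$, so both classes are weight $2(p+q)$ Hodge cycles. By the previous paragraph, they have the same image in $\tilde H^{2(p+q)}(X,\Q(p+q))$, and lemma~\ref{lemma:HCinj} forces $c(\alpha\cdot\beta)=c(\alpha)\cup c(\beta)$, which therefore lies in the image of $c$.

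The main obstacle is the factorization asserted in the first paragraph, namely the compatibility of the current-theoretic construction of $c$ in theorem~\ref{thm:obstrvan} with the ordinary cycle map on $\tilde X_0$. Modulo $W_{2p-1}$ only the component $A_0\in F^p\D^{2p}(\tilde X_0)$ of the cocycle $(A_0,A_1,\ldots)$ contributes, and by construction $A_0=A(\alpha_0)$ represents the usual cycle class of $\alpha_0=\pi^*\alpha\in CH^p(\tilde X_0)_\Q$ (this is the classical case of the KLM formulas); so the factorization reduces to the smooth case, and functoriality of the currents under $\pi$ (lemma~\ref{lemma:pushpull}) yields the required commutativity.
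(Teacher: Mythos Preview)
Your proof is correct and follows essentially the same route as the paper: form the motivic product, use that on $\tilde X_0$ the motivic product becomes the intersection product and the classical cycle map is multiplicative, and then invoke lemma~\ref{lemma:HCinj} to conclude that the two Hodge cycles $c(\alpha\cdot\beta)$ and $c(\alpha)\cup c(\beta)$ agree in $H^{2*}(X)$ because they agree in $\tilde H^{2*}(X)$. Your write-up is a bit more explicit than the paper's in justifying why both classes are Hodge cycles and why the projection of $c$ to $\tilde H^{2*}$ coincides with the classical cycle map on $\tilde X_0$, but the argument is the same.
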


\begin{proof}
  Let $\alpha = \alpha_1\cup\ldots \cup\alpha_n$, where $\alpha_i\in
  H^{2*}(X,\Q(*))$ is the  image of $\beta_i\in H^{2*}_M(X,\Q(*))$ under the
cycle map.
We can form the product $\beta=\beta_1\cup\ldots\cup\beta_n\in H^{2*}_M(X,\Q(*))$ (section \ref{section:mot}). 
In order to show that $\beta$ maps to $\alpha$, it  is enough to check that their images in $\tilde H^{2*}(X)$ agree
by lemma~\ref{lemma:HCinj}. Let
$\pi:\tilde X\to X$ be a resolution of singularities. Then $\pi^*\beta$
is the usual intersection product $\pi^*\beta_1\cdot \pi^*\beta_2\ldots\in CH^*(\tilde X)_\Q$. This maps
to $\pi^*\alpha_1\cup\ldots\cup\pi^*\alpha_n \in\tilde H^{2*}(X)$.
\end{proof}

\begin{cor}\label{cor:prodM}
  If $\alpha$ is a sum of products of degree $2$ Hodge cycles in $H^2(X,\Q(1))$, then it lies in the image
of $H^{2*}_M(X,\Q(*))$. 
\end{cor}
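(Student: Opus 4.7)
The plan is to combine the Lefschetz $(1,1)$ theorem (Theorem~\ref{thm:Lef}) with the ring-homomorphism content of Lemma~\ref{lemma:prodM}. Write $\alpha=\sum_j \beta_{j,1}\cup\cdots\cup\beta_{j,n_j}$ with each $\beta_{j,i}\in H^2(X,\Q(1))$ a weight $2$ Hodge cycle, and observe that it suffices to treat each homogeneous summand of $\alpha$ separately (after grouping by total degree $2n_j$), so without loss of generality $\alpha=\beta_1\cup\cdots\cup\beta_n$ is a single product.

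By Theorem~\ref{thm:Lef} applied to each factor, we may choose motivic lifts $\tilde\beta_i\in H_M^2(X,\Q(1))$ with $c(\tilde\beta_i)=\beta_i$. Using the cup product on motivic cohomology recalled at the end of Section~\ref{section:mot}, form
$$\tilde\alpha \;=\; \tilde\beta_1\cup\cdots\cup\tilde\beta_n \;\in\; H_M^{2n}(X,\Q(n)).$$
By Lemma~\ref{lemma:prodM} (more precisely, its proof, which is exactly the assertion that the cycle map respects products on the relevant image), the class $c(\tilde\alpha)$ and $\beta_1\cup\cdots\cup\beta_n$ have the same image in $\tilde H^{2n}(X,\Q(n))$. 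Since both lie in the space of weight $2n$ Hodge cycles in $H^{2n}(X,\Q(n))$ (the left side by Theorem~\ref{thm:obstrvan}, the right side because Hodge cycles are closed under cup product), the injectivity of Lemma~\ref{lemma:HCinj} forces $c(\tilde\alpha)=\alpha$. Summing over $j$ produces the required motivic lift of the original $\alpha$.

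There is essentially no obstacle beyond unpacking what has already been proved; the only thing to be mildly careful about is that Lemma~\ref{lemma:HCinj} is needed to upgrade the agreement in $\tilde H^{2*}(X)$ to an equality in $H^{2*}(X,\Q(*))$, which is why it was important to record in Theorem~\ref{thm:obstrvan} that the motivic cycle map lands in the space of Hodge cycles (not merely in the maximal pure quotient).
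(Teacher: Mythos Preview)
Your proof is correct and follows the same approach the paper intends: the corollary is stated without proof because it is immediate from Theorem~\ref{thm:Lef} (each degree~$2$ Hodge cycle lifts to motivic cohomology) together with Lemma~\ref{lemma:prodM} (the image of motivic cohomology is a subalgebra). You have simply unpacked the content of Lemma~\ref{lemma:prodM} in this special case rather than citing it as a black box, which is fine.
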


\begin{lemma}\label{lemma:cdhpi}
  Let $\pi:X\to Y$ be a finite morphism of normal varieties,
then $\pi_*:Sh(X_{cdh})\to Sh(Y_{cdh})$ is exact.
\end{lemma}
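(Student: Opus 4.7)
The plan is to verify exactness on stalks. Since $\pi$ is affine, the presheaf $(\pi_* F)(U) := F(U \times_Y X)$ is automatically a cdh sheaf whenever $F$ is, thanks to the stability of cdh covers under base change; this formula therefore computes $\pi_*F$ at the sheaf level. As a right adjoint, $\pi_*$ is automatically left exact, so the only content of the lemma is that it preserves epimorphisms. Because epimorphisms of cdh sheaves can be detected on a conservative family of points of the cdh-topos, it suffices to show that for every cdh-point $y$ of $Y$ the induced map $(\pi_*F)_y \to (\pi_*G)_y$ is surjective whenever $F \to G$ is.

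The essential claim will be a formula of the form
\[
(\pi_*F)_y \;\cong\; \prod_{i=1}^n F_{x_i},
\]
where $x_1,\ldots,x_n$ is a finite set of cdh-points of $X$ lying above $y$, depending only on $y$. Once this is in hand, the right-hand side (a finite product) preserves epimorphisms, and the surjectivity of $(\pi_* F)_y \to (\pi_* G)_y$ reduces to the surjectivity of each $F_{x_i} \to G_{x_i}$.

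To prove the formula, I will use the description of a cdh-point of $Y$ as the spectrum of a ``cdh-local'' henselian local ring $V$, obtained as the filtered colimit of the local rings of cdh-neighborhoods of $y$. Because $\pi$ is finite, $\Spec(V) \times_Y X$ is the spectrum of a finite $V$-algebra $V'$. Normality of $Y$ makes $V$ a normal domain, and normality of $X$ together with finiteness of $\pi$ forces $V'$ to be a finite product of normal domains, each realized as the integral closure of $V$ in a finite extension of $\mathrm{Frac}(V)$. A Hensel's-lemma-type argument adapted to the cdh-local setting shows that each such integral closure is itself a cdh-local ring, so $\Spec(V) \times_Y X$ decomposes as a finite disjoint union of cdh-points of $X$ above $y$. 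Evaluating $F$ at this disjoint union yields the desired formula.

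The main obstacle is the Hensel-type statement for cdh-local rings --- namely that a finite normal extension of a cdh-local ring is a finite product of cdh-local rings. This is exactly where the normality hypotheses on both $X$ and $Y$ are used: they guarantee that the relevant algebras are products of normal local rings to which a Hensel-style factorization of minimal polynomials applies. Once this structural fact is carefully justified (following Voevodsky's and Suslin--Voevodsky's analysis of points in the cdh-topos), the rest of the argument is a formal manipulation with stalks and finite products.
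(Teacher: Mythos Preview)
Your overall architecture matches the paper's proof exactly: check exactness on stalks, use a conservative family of points of the cdh-topos, and reduce to a product formula $(\pi_*F)_y \cong \prod_i F_{x_i}$ over finitely many cdh-points above $y$. The one genuine gap is in the identification of the points. The paper invokes Gabber--Kelly \cite{gk}, which shows that points of the cdh-topos are given by spectra of Henselian \emph{valuation} rings, not merely ``cdh-local henselian local rings'' as you write. This is more precise than anything in Suslin--Voevodsky, and it is exactly what dissolves your self-identified obstacle: once $A$ is a Henselian valuation ring and $\pi$ is finite, the ring $\Gamma(\OO_{\Spec A\times_Y X})$ decomposes via the classical theory of extensions of valuations to finite field extensions, and each factor is again a Henselian valuation ring (hence a cdh-point of $X$). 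No separate ``Hensel-type statement for cdh-local rings'' needs to be proved from scratch.

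In short, your proof becomes complete and coincides with the paper's once you replace the vague reference to cdh-local rings with the Gabber--Kelly description and the extension-of-valuations argument; normality enters only to guarantee that the relevant algebras are reduced so that the valuation-theoretic decomposition applies cleanly.
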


\begin{proof}
  By \cite{gk}, the topos $ Sh(Y_{cdh})   $ has enough points, and these correspond to maps of spectra of
Henselian valuation rings to $Y$. Given a point $y:\Spec A\to Y$ in
this sense, let $\{x_i\}$ be the finite set of points of $X$ corresponding
 to the  valuations of $\Gamma(\OO_{\Spec A\times_Y X})$ extending the
 valuation of $A$. It can be checked that
the stalk  $(\pi_*\F)_y =\prod \F_{x_i}$. Therefore given an epimorphism $\F\to \mathcal{G}$ of sheaves,
 $(\pi_*\F)_y \to (\pi_*\mathcal{G})_y$ is surjective for all points $y$. This suffices to prove the lemma.
\end{proof}

\begin{prop}\label{prop:invM}
  Let $X$ be a projective variety with an action by a finite group
  $G$. Let  $\pi:X\to Y= X/G$ be the quotient (which is well known to
   exist in the category of projective varieties). Then $\pi^*$ induces
  an isomorphism
$$H^i_M(Y,\Q(n))\cong H_M^i(X,\Q(n))^G$$
\end{prop}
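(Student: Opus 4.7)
The plan is to combine the cdh description of motivic cohomology from Section \ref{section:mot} with the exactness of pushforward supplied by Lemma \ref{lemma:cdhpi}, then to exploit the fact that $|G|$ is invertible in $\Q$ in order to take $G$-invariants via averaging. Applying Lemma \ref{lemma:cdhpi} to the finite quotient map $\pi:X\to Y$ (working throughout with $X$, hence $Y$, normal, to which one reduces), exactness of $\pi_*$ on cdh sheaves forces the Leray spectral sequence to degenerate, giving
$$H_M^i(X,\Q(n)) = H^i(X_{cdh},\Q_X(n)_{cdh}) \cong H^i\!\bigl(Y_{cdh},\pi_*\Q_X(n)_{cdh}\bigr),$$
equivariantly for the induced $G$-action. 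Since $|G|$ is invertible, the $G$-invariants functor is exact (via the idempotent $\tfrac{1}{|G|}\sum_g g$) and commutes with hypercohomology, so
$$H_M^i(X,\Q(n))^G \cong H^i\!\bigl(Y_{cdh},(\pi_*\Q_X(n)_{cdh})^G\bigr).$$

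It then remains to identify $(\pi_*\Q_X(n)_{cdh})^G$ with $\Q_Y(n)_{cdh}$. For this I would construct two natural maps of complexes of presheaves on $Y_{cdh}$. The first is the pullback $\pi^{\#}:\Z_Y(n)\to \pi_*\Z_X(n)$ obtained from correspondence pullback along $U\times_Y X\to U$, whose image obviously lies in the $G$-invariants. The second is a trace $\mathrm{tr}:\pi_*\Z_X(n)\to\Z_Y(n)$ defined by proper pushforward of cycles along the finite map $\A^n\times(U\times_Y X)\to\A^n\times U$, sending cycles quasi-finite over $U\times_Y X$ to cycles quasi-finite over $U$ and commuting with the Bloch-style coboundary \eqref{eq:delta}. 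A direct cycle-level computation then yields $\mathrm{tr}\circ\pi^{\#}=|G|\cdot\mathrm{id}_{\Z_Y(n)}$ and $\pi^{\#}\circ\mathrm{tr}=\sum_{g\in G}g^{*}$, which equals $|G|\cdot\mathrm{id}$ on the $G$-invariant subcomplex. Rationally, this exhibits $\pi^{\#}$ and $|G|^{-1}\mathrm{tr}$ as mutually inverse isomorphisms between $\Q_Y(n)$ and $(\pi_*\Q_X(n))^G$ of complexes of presheaves, hence of cdh sheaves after sheafification.

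The main obstacle is the construction and verification of the trace map at the level of the Suslin--Voevodsky cycle complex $z_{qf}(\A^n)$. One must check that proper pushforward preserves quasi-finiteness over the base (which uses finiteness of $\pi$) and commutes with the simplicial coboundary, and one must verify the two composition identities on the nose. These are cdh-sheaf analogues of the classical degree formula $\pi_*\pi^{*}=\deg\pi$, so no essentially new idea is required --- only careful bookkeeping with the Friedlander--Suslin--Voevodsky cycle complex. Combining this quasi-isomorphism with the two displayed identifications above then yields the stated isomorphism $H_M^i(Y,\Q(n))\cong H_M^i(X,\Q(n))^G$.
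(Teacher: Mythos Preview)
Your proposal is correct and follows essentially the same approach as the paper: both arguments combine Lemma~\ref{lemma:cdhpi} (exactness of $\pi_*$ on cdh sheaves), exactness of $G$-invariants over $\Q$, and a cycle-level identification $(\pi_*\Q_X(n))^G\cong \Q_Y(n)$. The only minor difference is that the paper obtains the last identification by citing \cite[ex.~1.7.6]{fulton} for the isomorphism $z_{qf}(\A^n\times\Delta^\dt)(U)_\Q\cong z_{qf}(\A^n\times\Delta^\dt)(\pi^{-1}U)_\Q^G$ directly, whereas you spell out the underlying trace/averaging mechanism; these are two presentations of the same fact.
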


\begin{proof}
  Given a cdh open $U\subset Y$, and an irreducible  cycle $V\in
  z_{qf}( \A^n\times \Delta^i)(U)_\Q$, then the cycle theoretic pullback
$$\pi^*V = \sum_W \text{card}\{g\in G\mid g|_W=id\} [W] $$
where $W$ runs over irreducible components of $\pi^{-1}U$.
This determines a cycle in $z_{qf}( \A^n\times \Delta^i)(\pi^{-1}U)_\Q$.
Moreover, it is seen to
induce an isomorphism 
$$  z_{qf}( \A^n\times \Delta^\dt)(U)_\Q \cong z_{qf}( \A^n\times \Delta^\dt)(\pi^{-1}U)_\Q^G $$
\cite[ex 1.7.6]{fulton}.
It is  also  compatible with the differential $\delta$, and therefore
it induces an isomorphism
$$H^i(Y_{cdh}, \Q(n)) \cong H^i(Y_{cdh},(\pi_*\Q(n))^G)$$
The functor of $G$-invariants  on $\Q$-modules is well known to be exact, together
with lemma  \ref{lemma:cdhpi},   this implies that
we can write the last group as 
$$ H^i(Y_{cdh}, \pi_*\Q(n))^G=H^i(X_{cdh},\Q(n))^G$$
\end{proof}

\begin{cor}\label{cor:invM}
  If conjecture~\ref{con:hodge} holds for $X$, then it holds for $Y$.
\end{cor}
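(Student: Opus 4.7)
The plan is to descend along the quotient map $\pi:X\to Y$, using Proposition~\ref{prop:invM} on the motivic side and the classical identification $H^i(Y,\Q)\cong H^i(X,\Q)^G$ on the topological side.

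First I would take a weight $2p$ Hodge cycle $\alpha\in H^{2p}(Y,\Q(p))$ and form its pullback $\pi^*\alpha\in H^{2p}(X,\Q(p))$. Since $\pi^*$ is a morphism of mixed Hodge structures, $\pi^*\alpha$ is again a weight $2p$ Hodge cycle, and it is tautologically $G$-invariant. Note that whenever $Y$ is defined over $\overline{\Q}$ the cover $X$ and the action of $G$ can be arranged to be defined over $\overline{\Q}$ as well, so that the hypothesis of Conjecture~\ref{con:hodge} for $X$ applies and yields a class $\beta\in H^{2p}_M(X,\Q(p))$ with $c(\beta)=\pi^*\alpha$.

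Next I would symmetrize: set
\[
\tilde\beta=\frac{1}{|G|}\sum_{g\in G}g^*\beta\in H^{2p}_M(X,\Q(p))^G.
\]
Because $c$ is natural with respect to the $G$-action and $\pi^*\alpha$ is $G$-invariant, $c(\tilde\beta)=\pi^*\alpha$. By Proposition~\ref{prop:invM}, the invariant group $H^{2p}_M(X,\Q(p))^G$ is identified with $H^{2p}_M(Y,\Q(p))$ via $\pi^*$, so $\tilde\beta=\pi^*\beta'$ for a unique $\beta'\in H^{2p}_M(Y,\Q(p))$.

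Finally I would verify $c(\beta')=\alpha$. Naturality of the cycle map gives $\pi^*c(\beta')=c(\pi^*\beta')=c(\tilde\beta)=\pi^*\alpha$. Since $\pi$ is finite surjective, $\pi_*\pi^*=|G|\cdot\mathrm{id}$ on $H^{*}(Y,\Q)$, so $\pi^*$ is injective and we conclude $c(\beta')=\alpha$, as required. The only step where something could go wrong is the compatibility used in identifying $\tilde\beta$ with $\pi^*\beta'$: one must check that the isomorphism of Proposition~\ref{prop:invM} is compatible with the cycle map, but this is immediate from the construction of that isomorphism via cycle-theoretic pullback, together with naturality of $c$. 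Hence the main (and only nontrivial) ingredient is really Proposition~\ref{prop:invM}; the rest is a straightforward averaging argument.
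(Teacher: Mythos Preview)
Your proof is correct and follows essentially the same route as the paper: the paper simply records the diagrammatic version of your averaging argument, namely that the surjection $H_M^{2p}(X,\Q(p))\to H_H^{2p}(X,\Q(p))$ stays surjective after taking $G$-invariants (since we are over $\Q$), and then identifies both sides with the corresponding groups for $Y$ via Proposition~\ref{prop:invM} and the classical isomorphism $H^*(Y,\Q)\cong H^*(X,\Q)^G$. Your remark about arranging $X$ and $G$ over $\overline{\Q}$ is unnecessary in this setup, since $X$ with its $G$-action is given and $Y=X/G$; the $\overline{\Q}$ hypothesis plays no role in the argument itself.
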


\begin{proof}
  By assumption we have a surjection $H_M^{2p}(X,\Q(p))\to
  H_H^{2p}(X,\Q(p))$, where the right side denotes the space of weight
  $2p$ Hodge cycles. Therefore we have  surjections
$$
\xymatrix{
 H_M^{2p}(X,\Q(p))^G\ar@{>>}[r] & H_H^{2p}(X,\Q(p))^G \\ 
 H_M^{2p}(Y,\Q(p))\ar@{>>}[r]\ar[u]^{\cong} & H_H^{2p}(Y,\Q(p))\ar[u]^{\cong}
}
$$
\end{proof}

\section{Fibre products of modular surfaces}

As evidence for conjecture \ref{con:hodge},
 we will check it  for the following class of examples. Let
$\Gamma\subseteq SL_2(\Z)$ be a subgroup of finite index such that
$-I\notin \Gamma$. Let $\mathbb{H}$ be the upper half plane and let 
$U=\mathbb{H}/\Gamma$ be the associated modular
curve with smooth projective compactification $C\supset U$. This can
be interpreted as the moduli space of (generalized) elliptic curves
with $\Gamma$-level structures. So in particular we get an associated 
universal family $f:\E\to C$, which is called an elliptic modular
surface \cite{shioda}. This is defined over $\overline{\Q}$.

\begin{thm}\label{thm:ellipticmod}
  Let $f:\E\to C$ be an elliptic modular surface. Then for any $n\ge
  1$, conjecture \ref{con:hodge} holds for  the $n$-fold  fibre product $X=\E\times_C\ldots \times_C\E$.
\end{thm}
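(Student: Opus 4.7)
The plan is to combine Corollary \ref{cor:prodM} with a structural analysis of the Hodge cycles on $X$: if I can show that the algebra of Hodge cycles in $H^{2*}(X,\Q(*))$ is generated by Hodge cycles of degree $2$, then the theorem follows immediately from Corollary \ref{cor:prodM}. Since $X$ is defined over $\overline{\Q}$ and products of classes coming from $H^2_M$ lie in the image of $H^{2*}_M$, this reduces everything to a purely Hodge-theoretic generation statement.

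The main tool is the Leray spectral sequence for the structure map $g=f^{(n)}\colon X\to C$,
$$E_2^{p,q}=H^p(C,R^q g_*\Q)\Rightarrow H^{p+q}(X,\Q),$$
which I expect to degenerate at $E_2$ by a weight argument. Over the smooth locus $U\subset C$, Künneth gives $R^q g_*\Q|_U=\Lambda^q(V^{\oplus n})$, where $V=R^1f_*\Q|_U$ is the rank-$2$ variation of Hodge structure attached to the elliptic fibration. The group $(\Z/2)^n\rtimes S_n$ acts on $X$ by fiberwise $[-1]$ and by permutation of factors, and the corresponding isotypic decomposition writes each $R^qg_*\Q$ canonically as a direct sum of local systems of the form $\mathrm{Sym}^k V$ (tensored with trivial summands) extended to $C$ via Deligne's canonical extension. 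Since $\dim C=1$, only the columns $p=0,1,2$ contribute.

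I then analyze each column for Hodge cycles of type $(m,m)$. For $p=0,2$, the contributing summands are the trivial pieces of $\Lambda^q(V^{\oplus n})$, which come from the invariants $\Lambda^2 V=\Q(-1)$ and constant factors; these pieces are classically generated by the fiber class $[F]$, the class of a fibered diagonal $\Delta_{ij}\subset \E\times_C\E\subset X$, and products of such divisor classes — all of degree $2$. For $p=1$, the contributions involve $H^1(C,\mathrm{Sym}^k V)$ for $k\ge 1$; by the Eichler–Shimura decomposition these break up into pieces of pure Hodge type $(k+1,0)+(0,k+1)$ attached to cusp forms of weight $k+2$, plus Eisenstein pieces of type $(k+1,0)+(0,k+1)$. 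In no case does $(r,r)$ appear, so the $p=1$ column contains no Hodge cycles of the required type. Combining these observations shows that every Hodge cycle in $\tilde H^{2m}(X)$ is a polynomial in classes of divisors, and lemma \ref{lemma:HCinj} then lifts the conclusion to $H^{2m}(X)$.

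The main obstacle is the Eichler–Shimura/Hodge-type analysis of the middle column. One must carry this out for the mixed (rather than pure) variation, working with the canonical extension of $\mathrm{Sym}^k V$ across the cusps and elliptic points of $C$ to get the correct weights on $H^1(C,\mathrm{Sym}^k V)$; this is where the input from Zucker and Deligne on the Hodge theory of local systems on modular curves is essential. A secondary technical point is that $X$ is singular, so one must handle its mixed Hodge structure via a GNPP resolution $\tilde X_\bullet\to X$ and verify that the Leray filtration is compatible with the weight filtration — but lemma \ref{lemma:HCinj} and theorem \ref{thm:obstrvan} reduce the actual verification to computations on $\tilde H^{2p}(X)$, which is exactly the pure part controlled by the Leray analysis above.
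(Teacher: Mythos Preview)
Your strategy matches the paper's: reduce via Corollary~\ref{cor:prodM} to showing that the algebra of Hodge cycles is generated in degree~$2$, then analyze the Leray spectral sequence of $X\to C$ column by column. Two points of execution differ. First, the paper begins by passing to a semistable model via a Galois cover (Lemma~\ref{lemma:semi} and Corollary~\ref{cor:invM}); this ensures $\cL\cong j_*j^*\cL$ and makes the sheaf-theoretic bookkeeping over the cusps tractable. Second, for the middle column the paper does not invoke Eichler--Shimura but computes $IH^1(C,\cL^{\otimes b})^{(q,q)}$ directly via the associated graded Higgs bundle, reducing the vanishing to the fact that the Kodaira--Spencer map $\kappa:V^1\to V^0\otimes\Omega_C^1(\log S)$ is an isomorphism (which in turn is deduced from Shioda's formula $\dim IH^1(\cL)=2p_g(\E)$). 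Your Eichler--Shimura route yields the same vanishing, so this is a difference of packaging rather than substance.

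There is, however, a genuine omission in your treatment of the outer columns. The constructible sheaf $R^q g_*\Q$ is not merely the extension of the local system $\Lambda^q(V^{\oplus n})$ across $S$: over each singular fibre it acquires skyscraper subsheaves (the paper's $K_s(c)$) coming from the extra irreducible components of the degenerate fibres, and these contribute nontrivially to $H^0(C,R^q g_*\Q)$ as in \eqref{eq:Rabc0}. You must verify separately that these classes are generated in degree~$2$---they are, since $H^0(K_s)$ is spanned by components of $\E_s$---but your sketch passes over them entirely. Relatedly, your phrase ``extended to $C$ via Deligne's canonical extension'' conflates the filtered vector-bundle extension (relevant for the Hodge filtration on $IH^*$) with the constructible sheaf $j_*j^*\cL$ (relevant for computing the $E_2$ terms); the two pictures line up cleanly only after the semistable reduction that you did not perform, which is precisely why the paper carries out that step first.
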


Before starting the proof of the theorem, we will need to recall some
facts about elliptic modular surfaces.
Let us assume that   $f:\E\to C$ is a semistable elliptic modular surface. Let $S= C-U$.
 The cohomology $H^2(\E,\Q)$ carries a filtration induced by the Leray
spectral sequence. Since this degenerates \cite[\S 15]{zucker}, we can write
 the subquotients  of $H^2(\E,\Q)$ as
 \begin{eqnarray*}
   L^2 &=&  H^2(C,\Q)\\
L^1/L^2 &=& H^1(C, R^1f_* \Q)\\
L^0/L^1 &=& H^0(C, R^2 f_*\Q)
 \end{eqnarray*}
It is immediate that $L^2$ is generated by the
class of a fibre of $f$.  

The restriction of $R^2f_*\Q$ to $U$ is the
constant sheaf $\Q_U$.  So we have an adjunction map $R^2f_*\Q\to
\Q_C$ leading to an exact sequence
\begin{equation}
  \label{eq:KR2f}
0\to K\to R^2f_*\Q\to \Q\to 0  
\end{equation}
where $K=\bigoplus K_s$ is a sum of sheaves  supported at $s\in S$.  We can interpret this more explicitly
by restricting to a small disk $D$ centered at $s\in S$.  Let $t\in
D-\{s\}$. 
Then the restriction of \eqref{eq:KR2f} to corresponds to the sequence
$$0\to H^0(K_s)\to H^2(X_s,\Q)\stackrel{c}{\to} H^2(X_{t},\Q)=\Q\to 0$$
 The map $c$ is the collapsing map induced by the homotopy equivalence
followed by restriction $X_s\approx f^{-1}X \supset X_t$. 
This leads to a sequence
\begin{equation}
  \label{eq:L0L1}
0\to \bigoplus_s H^0(K_s)\to L^0/L^1\to \Q(-1)\to 0  
\end{equation}
The space on the  right  is generated
by a fundamental class of an irreducible curve which is horizontal
 in the sense having nonzero intersection  number with the general
 fibre.  The space on $H^0(K_s)$   is spanned divisors supported on $X_s$ orthogonal
to the horizontal divisor.
It follows that $L^0/L^1$ is spanned by divisors.

It remains to analyze $L^1/L^2$. Set $\cL=R^1f_*\Q$.
 To begin with,  we claim that 
\begin{equation}
  \label{eq:jjL}
\cL\cong j_*j^*\cL  
\end{equation}
To prove this, it suffices to check isomorphisms at the stalks at each
$s\in S$.   Choose  a small disk $D$ centered at $s$. 
Then we have to show that $H^1(X_s,\Q)\cong H^1(X_t,\Q)^{\pi_1(D^*)}$. 
Semistability implies that fibre $X_s$ is of type $I_N$, i.e. a
polygon of $N$ smooth rational curves, for some $N$. We can choose a symplectic basis $e_1, e_2$ of $H_1(X_t)$ 
such that the $N$ vanishing cycles are all homologous to $e_2$, and
the image of $e_1$ generates $H_1(X_s)$. Thus $H^1(X_s)\to H^1(X_t)$
is injective. The image is precisely the dual $e_1^*$, which by the
Picard-Lefschetz formula spans the invariant cycles. Therefore
\eqref{eq:jjL} holds. Consequently
$$H^1(C,\cL)\cong H^1(C, j_*\cL|_U)$$
The right side can be identified with intersection cohomology $IH^1(C,\cL)$.

 Zucker \cite[thm 7.12]{zucker} showed that intersection cohomology
$IH^1(C,\cL)$ carries an intrinsic Hodge structure
 which is isomorphic $L^1/L^2$. This comes by identifying this with
 $L^2$ cohomology with coefficients in $\cL$.  In a bit more detail,
the local system $\cL_U$ is associated to a polarized variation of Hodge
  structure on $U$ with unipotent local monodromy.  For any such variation,
by work of Schmid
  \cite{schmid} the   vector
  bundle  $\mathcal{V}_U=\cL_U\otimes \OO_U$ with its Hodge
  filtration extends to a filtered bundle $(\mathcal{V},{F})$
  on $C$. The log complex
$$\mathcal{V}\stackrel{\nabla}{\to} \mathcal{V}\otimes \Omega_C^1(\log
S)$$
is filtered by
$${F}^p\to {F}^{p-1}\otimes \Omega_C^1(\log
S)$$
The subcomplex $\mathcal{V}\to \im\nabla$ with induced filtration
 forms part of a cohomological Hodge complex that computes $IH^*(\cL)$.
Returning to  our specific case, we have isomorphisms
$$V^1={F}^1\cong f_*\Omega^1_{X/C}(\log f^{-1}S)=
f_*\omega_{X/C}$$
 and
$$V^0={F}^0/{F}^1\cong R^1f_*\OO_X$$ 
by  \cite{steenbrink}. 
An easy  computation
shows that
$$
IH^1(\cL)_\C^{(p,q)} =
\begin{cases}
H^1(C,V^0) & (p,q)=(0,2)\\
  H^1(C, V^1\stackrel{\kappa}{\to} V^0\otimes \Omega_C^1(\log S))& (p,q)=(1,1)\\
H^0(C,V^1\otimes \Omega^1_C(\log S)) &(p,q)=(2,0)
\end{cases}
$$
where $\kappa$ is the Kodaira-Spencer class.
We can repackage this  by
  defining  the graded vector bundle  $V= V^0 \oplus V^1$ with Higgs field
$$\theta=
\begin{pmatrix}
  0 &0\\ \kappa & 0
\end{pmatrix}
:V\to V\otimes \Omega^1_C(\log S)
$$
Then $IH^1(\cL)$ is the first hypercohomology of the last complex, and
the $(p,q)$ decomposition can be recovered from the induced grading. This viewpoint
is more convenient for 
analyzing $IH^1(\cL^{\otimes n})$ below.
In order to do this, observe that given two locally unipotent polarized variations of Hodge
structure with associated graded Higgs bundles
$(W,\theta)$ and $(W',\theta')$, the tensor
 product is associated to $W''=W\otimes W'$ with Higgs field $\theta\otimes 1+1\otimes
\theta'$ and grading
$$(W'')^i=\bigoplus_{j+k=i} W^i\otimes (W')^j$$

  Shioda \cite[eq (4.12)]{shioda} showed  that $\dim
IH=2p_g(\E)$. Together with the fact that
 $p_g(\E)= \dim H^1(C, R^1f_*\OO_X) = \dim IH^{(0,2)}$, we can
 conclude that 
 \begin{equation}
   \label{eq:IH11}
\dim IH^{(1,1)}=0   
 \end{equation}
Since $\kappa$ is nonzero, we conclude that
$$ V^1\stackrel{\kappa}{\to} V^0\otimes \Omega_C^1(\log S)\cong
\coker \kappa [-1]$$
in the derived category.
Combing this with \eqref{eq:IH11} implies that $\kappa$ is isomorphism.

To extend this analysis to nonsemistable surfaces, we observe the
following.

\begin{lemma}\label{lemma:semi}
  If $\E\to C$ is an elliptic modular surface, then there exists a
  Galois cover $p:C'\to C$ such that $\E'=\E\times_{C} C'$ is birational to a
  semistable modular surface.
\end{lemma}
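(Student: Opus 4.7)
The plan is to take $C'$ to be the modular curve of a suitably deep normal subgroup of $\Gamma$. Fix an integer $N \ge 3$ and set $\Gamma_0 = \Gamma \cap \Gamma(N)$, where $\Gamma(N)$ is the principal congruence subgroup of $SL_2(\Z)$; this has finite index in $\Gamma$. Let $\Gamma'$ be the normal core of $\Gamma_0$ in $\Gamma$, i.e.\ the intersection of the $\Gamma$-conjugates of $\Gamma_0$. Then $\Gamma' \triangleleft \Gamma$, it still has finite index, and $\Gamma' \subset \Gamma(N)$, so in particular $-I \notin \Gamma'$. Let $C' = \overline{\mathbb{H}/\Gamma'}$; the induced map $p : C' \to C$ is Galois with group $\Gamma/\Gamma'$, and the elliptic modular surface $\E_{\Gamma'} \to C'$ is defined.

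Next I would verify that $\E_{\Gamma'}$ is semistable. The singular fibers of an elliptic modular surface sit over the cusps of its base, and by Kodaira's classification they are of type $I_k$ for some $k \ge 1$ precisely when the local monodromy of the level group at the cusp is a nontrivial unipotent matrix. For $\Gamma(N)$ with $N \ge 3$ each cusp is regular and the local monodromy is conjugate to $\bigl(\begin{smallmatrix} 1 & N\\ 0 & 1\end{smallmatrix}\bigr)$. Any cusp of $C'$ lies over some cusp of the $\Gamma(N)$-modular curve, and the local monodromy of $\Gamma'$ there is a positive power of the $\Gamma(N)$-monodromy, hence again unipotent. So every singular fiber of $\E_{\Gamma'}$ is of type $I_k$, which is exactly semistability.

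Finally, I would identify $\E_{\Gamma'}$ up to birational equivalence with $\E' = \E \times_C C'$. Over the open set $U' = \mathbb{H}/\Gamma' \subset C'$, an elliptic curve with $\Gamma$-level structure pulled back along $U' \to U$ acquires a canonical $\Gamma'$-level structure via the inclusion $\Gamma' \subset \Gamma$, and this identification is functorial; therefore the restrictions to $U'$ of both $\E_{\Gamma'}$ and $\E \times_C C'$ represent the universal $\Gamma'$-level elliptic curve and agree as smooth families there. Since both $\E_{\Gamma'}$ and the smooth locus of $\E \times_C C'$ are then smooth surfaces containing a common Zariski open subset, they are birational, as claimed.

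The main obstacle I expect is not technical but bookkeeping: carefully linking the group-theoretic input (existence of a normal $\Gamma' \subset \Gamma \cap \Gamma(N)$), the moduli-theoretic input (pullback of universal families matches up to birational equivalence), and the Kodaira classification (unipotent monodromy forces $I_k$ fibers). The crucial point is that the condition $\Gamma' \subset \Gamma(N)$ with $N \ge 3$ is exactly what forces every cusp of $\Gamma'$ to have unipotent local monodromy, ruling out all non-$I_k$ Kodaira types.
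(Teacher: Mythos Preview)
Your proof is correct and follows essentially the same route as the paper, which simply takes $C'$ to be the modular curve for $\Gamma\cap\Gamma(N)$ with $N\ge 3$ and cites Shioda for the fact that all singular fibres are of type $I_N$. One simplification: since $\Gamma(N)$ is normal in $SL_2(\Z)$, the intersection $\Gamma_0=\Gamma\cap\Gamma(N)$ is already normal in $\Gamma$, so passing to the normal core is unnecessary and you may take $\Gamma'=\Gamma_0$ directly.
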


\begin{proof}
  Let $\Gamma\subset SL_2(\Z)$ be the group associated to $\E$.
Then we may take $C'\to C$ to be the modular curve associated to
$\Gamma\cap \Gamma(N)$, where $\Gamma(N)$ is the principal congruence
subgroup of level $N\ge 3$. It is known that all singular fibres of the
elliptic modular surface corresponding to $\Gamma(N)$ will be of type $I_N$ 
\cite[ex 5.4]{shioda}, and consequently semistable. Semistability will
persist over $C'$.
\end{proof}

With the notation as in the lemma, let $\pi:\tilde \E\to \E'$ be the minimal
resolution, $f':\E'\to C$ and $\tilde f:\tilde \E\to C$ the
projections, and
let $G$ be the Galois group of $C'/C$. We claim that the above results
carry over to $f':\E\to C'$. More specifically, there are isomorphisms
or exact sequences

\begin{equation}
  \label{eq:Rf'}
  \begin{split}
&f'_*\Q = p_*\Q\\
& R^1f'_*\Q = j_*j^* \cL,\quad \cL'=R^1f'_*\Q\\
&0\to \bigoplus_{s\in S} K_s'\to R^2f'_*\Q\to p_*\Q\to 0
\end{split}
\end{equation}
where $K_s'$ is supported on $S$ and spanned by algebraic cycles
supported on $\E'_s$. We also have that
$$IH^1(C,\cL')^{11} =H^1( (V')^1\stackrel{\kappa}{\to} (V')^0\otimes \Omega_C^1(\log S) )=0$$
where the graded Higgs bundle is defined as above. These
statements follow from straightforward modifications of
the  previous arguments. We also have that:

\begin{lemma}\label{lemma:leray}
  The Leray spectral sequence for $f'$ degenerates at $E_2$
\end{lemma}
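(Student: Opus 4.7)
The plan is to use that $C$ is a curve to eliminate almost all differentials, then kill the surviving $d_2$'s by showing that the local system $\cL'|_U = R^1f'_*\Q|_U$ is ``too non-trivial'' to admit either invariants or coinvariants. Since $H^p(C,-) = 0$ for $p \geq 3$, every differential $d_r$ with $r \geq 3$ has vanishing target, and the only $d_2$'s that can be nonzero are the ones with source $E_2^{0,q}$, namely
\begin{equation*}
d_2^{0,1}\colon H^0(C, R^1f'_*\Q) \to H^2(C, f'_*\Q) \quad\text{and}\quad d_2^{0,2}\colon H^0(C, R^2f'_*\Q) \to H^2(C, R^1f'_*\Q).
\end{equation*}
I would kill both of these at once by establishing $H^0(C, R^1f'_*\Q) = 0$ and $H^2(C, R^1f'_*\Q) = 0$.

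For the vanishing of $H^0$: using the identification $R^1f'_*\Q \cong j_*(\cL'|_U)$ from \eqref{eq:Rf'}, we have $H^0(C, R^1f'_*\Q) = H^0(U, \cL'|_U)$, the space of monodromy invariants. The monodromy representation of $\cL'|_U$ factors through a finite-index subgroup of $SL_2(\Z)$ acting on $\Q^2$ (it is induced along the finite \'etale cover $p\colon p^{-1}U \to U$ from the original elliptic monodromy $\Gamma \subset SL_2(\Z)$). Any finite-index subgroup of $SL_2(\Z)$ stabilizes no nonzero vector in $\Q^2$: a fixed vector would force such a subgroup into a proper parabolic of $SL_2$, which has infinite index in $SL_2(\Z)$. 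Hence $H^0 = 0$, and $d_2^{0,1}$ vanishes.

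For the vanishing of $H^2$: I would use the short exact sequence $0 \to j_!(\cL'|_U) \to j_*(\cL'|_U) \to \mathcal{S} \to 0$ with $\mathcal{S}$ a sum of skyscrapers on $S$, which yields a surjection
\begin{equation*}
H^2_c(U, \cL'|_U) = H^2(C, j_!(\cL'|_U)) \twoheadrightarrow H^2(C, j_*(\cL'|_U)).
\end{equation*}
By Poincar\'e duality on $U$, the source is dual, up to a Tate twist, to $H^0(U, (\cL'|_U)^\vee)$. The standard self-duality $(R^1f_*\Q|_{U\cap f\text{-smooth}})^\vee \cong (R^1f_*\Q)(1)$ coming from the cup-product pairing on elliptic fibres is preserved by the finite \'etale pushforward along $p$, so $(\cL'|_U)^\vee \cong (\cL'|_U)(1)$; thus the Poincar\'e dual identifies with $H^0(U,\cL'|_U)\otimes \Q(1) = 0$ by the previous step. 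This forces $H^2(C, R^1f'_*\Q) = 0$ and hence $d_2^{0,2} = 0$. The only delicate point is tracking the Tate twists through the self-duality transferred along $p$; the remainder of the argument rests entirely on \eqref{eq:Rf'} and the monodromy fact for finite-index subgroups of $SL_2(\Z)$, so I do not expect any serious obstacle.
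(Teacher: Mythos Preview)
Your argument is correct and takes a genuinely different route from the paper. The paper compares the Leray spectral sequence of $f'$ with that of the resolution $\tilde f:\tilde\E\to C$; since $R^1f'_*\Q\cong R^1\tilde f_*\Q$ and $f'_*\Q\cong\tilde f_*\Q$, the potentially nonzero $d_2$'s for $f'$ are identified (on source or target) with those for $\tilde f$, and the latter vanish because the Leray spectral sequence of the smooth projective $\tilde\E$ degenerates by Zucker's result \cite[cor 15.15]{zucker}. Your approach instead kills the offending $E_2$-terms outright: you use that $\cL'|_U$ is induced from the rank~$2$ local system over $p^{-1}(U)$, whose monodromy lies in a finite-index subgroup of $SL_2(\Z)$, to force $H^0=0$, and then Poincar\'e duality plus self-duality of $\cL'$ to force $H^2=0$.

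Two remarks. First, your phrasing ``finite \'etale cover $p:p^{-1}U\to U$'' is not quite right in general: $p$ may ramify over elliptic points in $U$ when $\Gamma$ has $3$-torsion, so $\cL'|_U$ need not itself be a local system. This is harmless for your argument, since $R^1f'_*\Q=p_*p^*\cL$ (by proper base change and finiteness of $p$), so both $H^0$ and $H^2_c$ can be computed on $U'=p^{-1}(U)$ or $C'$, where $p^*\cL$ \emph{is} a local system and Poincar\'e duality applies cleanly. Second, you are in fact more explicit than the paper about $d_2^{0,2}$: the paper asserts that only one differential could be nonzero and draws the diagram only for $d_2^{0,1}$, though its comparison method extends to $d_2^{0,2}$ immediately (injectivity on the target suffices). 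Your method is more elementary---it avoids invoking degeneration for the smooth model---but it uses the specific fact that finite-index subgroups of $SL_2(\Z)$ have no invariants on $\Q^2$, so it is tied to the modular setting, whereas the paper's comparison argument would work verbatim for any family admitting a smooth projective resolution over the same base.
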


\begin{proof}
The only differential
that could be nonzero is indicated as $d'_2$ below.
$$
\xymatrix{
 H^0(R^1f'_*\Q)\ar[r]^{d'_2}\ar[d]^{\cong} & H^2(f'_*\Q)\ar[d]^{\cong} \\ 
 H^0(R^1\tilde f_*\Q)\ar[r]^{d''_2} & H^2(\tilde f_*\Q)
}
$$
The vertical maps are easily seen to be isomorphisms by \eqref{eq:Rf'}
and the analogous facts for $\tilde f$. Since $d''_2=0$ by
\cite[cor 15.15]{zucker}, we can conclude that $d'_2=0$. 
\end{proof}

\begin{lemma}\label{lemma:HL}
  Given a polarized variation of Hodge structure $\cL$ on $U$, cup
  product with the fundamental class $[C]$ induces an isomorphism
$$ IH^0(C,\cL)\cong IH^2(C,\cL )$$
\end{lemma}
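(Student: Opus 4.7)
The plan is to invoke the hard Lefschetz theorem for intersection cohomology with coefficients in a polarized variation of Hodge structure. Since $H^2(C,\Q) \cong \Q$ is one-dimensional, cup product with $[C]$ differs from cup product with the K\"ahler class $[\omega]$ by a nonzero scalar, so it suffices to show that the Lefschetz operator $L_\omega \colon IH^0(C,\cL) \to IH^2(C,\cL)$ is an isomorphism.

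For the isomorphism statement, I would appeal to the fact that the intermediate extension $j_{!*}(\cL[1])$ is a pure polarizable Hodge module on $C$ of weight $w+1$ (where $w$ is the weight of $\cL$), and invoke the hard Lefschetz theorem for hypercohomology of pure polarizable Hodge modules on a smooth projective variety. Equivalently, this follows from Zucker's identification (already being used heavily in this section) of $IH^*(C,\cL)$ with the $L^2$-cohomology of $(U, \text{Poincar\'e metric})$ with values in the polarized harmonic metric bundle attached to $\cL$. In this complete K\"ahler setting, the standard K\"ahler identities yield hard Lefschetz via the usual $sl_2$-action.

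As a sanity check on the dimensions, Poincar\'e duality for intersection cohomology together with the polarization $\cL \cong \cL^*(-w)$ gives an isomorphism $IH^2(C,\cL)^* \cong IH^0(C,\cL^*) \cong IH^0(C,\cL)(w)$, so the two spaces have the same $\Q$-dimension and the map $L_\omega$ is an isomorphism as soon as it is injective.

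The main obstacle, if one wished to avoid appealing to either of the cited deep theorems, would be to construct the inverse of $L_\omega$ directly from the graded Higgs bundle description $(V,\theta)$ used earlier in this section. This can in principle be done by writing out the Hodge decomposition of $IH^0$ and $IH^2$ in terms of kernels and cokernels of the map induced by $\theta$ on the log de Rham complex of $(\cV, \nabla)$, and tracing through what cup product with $[\omega]$ becomes at the Dolbeault level; but since hard Lefschetz for polarizable variations on a curve is entirely classical, we prefer the direct citation.
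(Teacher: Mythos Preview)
Your proposal is correct and matches the paper's own proof almost exactly: the paper likewise cites Saito's hard Lefschetz theorem and, as the more direct alternative, invokes Zucker's identification of $IH^*(C,\cL)$ with $L^2$ harmonic forms together with the K\"ahler identities. Your additional remarks (the scalar relating $[C]$ to $[\omega]$, the Poincar\'e-duality dimension check) are not in the paper but are harmless elaborations.
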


\begin{proof}
This is a special case of the hard Lefschetz  theorem of  Saito \cite[thm
5.3.1]{saito}, but this can proved more directly as follows.  Both
groups are represented by spaces of  $L^2$ $\cL$-valued harmonic forms \cite[\S 7]{zucker}. 
Using this representation and the K\"ahler identities \cite[\S 2]{zucker}, the map, which given by wedging with the
K\"ahler form, is seen to be  an isomorphism by the usual argument
\cite[pp 118-122]{gh}.
\end{proof}

\begin{proof}[Proof of theorem \ref{thm:ellipticmod}]
By corollary \ref{cor:invM} and lemma \ref{lemma:semi}, it is enough
to prove the conjecture for
 $X'=\E'\times_C\ldots \times_C\E'$, because $X=X'/G^n$.
From this point onwards, there is no need to refer to the original
variety $X$. So in the interest of simplifying the notation, we omit
the primes and write $X,
f,\cL,\ldots$ instead of $X', f', \cL',\ldots$
By corollary \ref{cor:prodM}, it is enough to prove that the algebra
of Hodge cycles in $H^{2*}(X)$ is generated by degree $2$ cycles. If we
denote the space of Hodge cycles (respectively products of degree $2$
Hodge cycles) by $H^{2*}_{Hodge}(X)$ (respectively $H^{2*}_{Hodge,2}(X)$),
then  we have to show that $\dim H^{2*}_{Hodge}(X)= \dim H^{2*}_{Hodge, 2}(X)$.
Toward this end, it suffices to prove that $\dim H^{2*}_{Hodge}(X)\cap H_i=\dim
H^{2*}_{Hodge, 2}\cap H_i$ for any possibly noncanonical decomposition
$H^*(X)=\bigoplus H_i$.

The Leray spectral sequence
$$H^p(C, R^qF_*\Q) \Rightarrow H^{p+q}(X,\Q)$$
is  compatible with mixed Hodge structures
\cite{arapura}.  This degenerates by an argument similar to the proof
of lemma~\ref{lemma:leray}.
 Thus we will have a
noncanonical decomposition
\begin{equation}
  \label{eq:Leray}
 \tilde H^{2k}(X,\Q) \cong \bigoplus_{i+j=2k} H^i(C, R^jF_*\Q)  
\end{equation}
We will show that Hodge cycles on the spaces on the right are products
of degree $2$ cycles. 
We can decompose the direct
images 
$$R^j F_*\Q = \bigoplus_{{a+b+c=n}\atop{b+2c=j}}  (\underbrace{(f_*\Q)^{\otimes a}\otimes
(R^1f_*\Q)^{\otimes b}\otimes (R^2f_*\Q)^{\otimes c}}_{R(a,b,c)})^{N(a,b,c)}$$
using K\"unneth's formula, where $N(a,b,c)$ is some exponent whose
precise value  is unimportant for us. The sequence of \eqref{eq:Rf'} gives an exact sequence
$$0\to \bigoplus_{s\in S} K_s(c) \to (R^2f_*\Q)^{\otimes c}\to
\Q^{\otimes c}\cong \Q\to 0$$
where $K_s(c)$  is not the Tate twist, it is merely a notation for a
certain sky scraper sheaf supported at $s$.  It
decomposes noncanonically as
$$K_s(c) \cong \bigoplus_{a+b=c} (K_s^{\otimes a}\otimes \Q^{\otimes
  b})^{N(a,b)}\cong \bigoplus_{a+b=c} (K_s^{\otimes a})^{N(a,b)}$$
The components fit into  exact sequences
$$
\xymatrix{
 0\ar[r] &  \bigoplus_s \underbrace{\Q_s^{\otimes a}\otimes
   (j_*j^*\cL^{\otimes b})_s\otimes K_s(c)}_{}\ar[r]\ar@{}[d]^{\cong}
 &  R(a,b,c)\ar[r] & {}\underbrace{\Q^{\otimes a}\otimes  j_*j^*\cL^{\otimes  b}\otimes \Q^{\otimes c}}_{} \ar[r]\ar@{}[d]^{\cong} & 0 \\
  & K_s(c)&  &j_*j^*\cL^{\otimes  b} & 
}
$$
Thus we have (noncanonical) isomorphisms
\begin{equation}
  \label{eq:Rabc0}
H^0(C,R(a,b,c))\cong  \left(\bigoplus_s H^0(K_s(c))\right)\oplus  (\cL^{\otimes   b})^{\pi_1(U)}
\end{equation}

\begin{equation}
  \label{eq:Rabc1}
H^i(C,R(a,b,c))\cong IH^i(C,\cL^{\otimes  b}),\, i\ge 1  
\end{equation}
We analyze each of these summands in turn, and show that Hodge cycles
in them are spanned by degree $2$ Hodge cycles.

\begin{enumerate}

\item 
The Zariski closure of the  image of $\pi_1(U)$ under the monodromy
representation associated to $\cL$ is $SL_2(\Q)$. So by classical
invariant theory \cite[appendix F]{fh}, $ (\cL^{\otimes
  b})^{\pi_1(U)}$ is a sum of  products of sections of $ (\cL^{\otimes
  2})^{\pi_1(U)}$, and therefore a sum of products of
degree $2$ Hodge cycles. 

\item  The spaces $H^0(K_s(c))$
can be further decomposed into of sums of tensor powers of
$H^0(K_s)$, and  each of these spaces is generated by degree $2$
classes.

\item Next, we turn to $IH^2(C,(\cL^{\otimes b}))$. By lemma
  \ref{lemma:HL} there is an isomorphism
$$ (\cL^{\otimes  b})^{\pi_1(U)}= IH^0(C,(\cL^{\otimes b})
)\stackrel{\sim}{\longrightarrow}IH^2(C,(\cL^{\otimes b}) )$$
given by cupping with the fundamental class $[C]$.
With this isomorphism, we
see that these groups are generated by degree $2$ Hodge cycles.

\item Finally consider,
$$T:=
IH^1(C,\cL^{\otimes(2q-1)})^{(q,q)}$$
By  previous remarks, $T$
 can be computed as the $q$th summand of  the first hypercohomology of the 
graded Higgs bundle $(V,\theta)^{\otimes (2q-1)}$. In more explicit terms, $T$ is the $1$st
hypercohomology of the complex
\begin{equation}
  \label{eq:kunneth}
  \bigoplus_{\sum i_k=q} V^{i_1}\otimes\ldots\otimes V^{i_{2q-1}}\to \bigoplus_{\sum j_k=q-1} 
V^{j_1}\otimes\ldots \otimes V^{j_{2q-1}}\otimes \Omega_C^1(\log S)
\end{equation}
The differential  is given as a sum of maps $1\otimes \kappa\otimes 1$.
This is acyclic because $\kappa$ is an isomorphism. Therefore $T=0$.

\end{enumerate}

\end{proof}

When $\E\to C$ is a semistable elliptic modular surface, the
singularities of $X= \E\times_c\ldots\times_c\E$ are
toroidal. Therefore we have a toroidal resolution of singularities
$\pi:\tilde X\to X$ (cf \cite{gordon}).

\begin{cor}[Gordon]
  The Hodge conjecture holds for $\tilde X$.
\end{cor}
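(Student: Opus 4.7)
The plan is to deduce the Hodge conjecture for $\tilde X$ from Theorem~\ref{thm:ellipticmod} combined with the toric geometry of the exceptional locus. Since $\E\to C$ is semistable with $I_N$-fibres, the singularities of $X=\E\times_C\cdots\times_C\E$ are toroidal, and the exceptional divisor $E$ of the toroidal resolution $\pi\colon\tilde X\to X$ is a simple normal crossings union of smooth projective toric varieties meeting along toric strata. By \cite[ex 19.1.11]{fulton}, toric varieties admit cellular decompositions, so their cohomology is generated by classes of torus-invariant subvarieties, and every Hodge cycle on any stratum of $E$ is algebraic.

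Given a Hodge cycle $\beta\in H^{2p}(\tilde X,\Q(p))$, I would use the decomposition theorem of Beilinson--Bernstein--Deligne applied to $R\pi_*\Q_{\tilde X}$, or equivalently an iterated Gysin/Mayer--Vietoris argument over the toric stratification of $E$ (the latter building on proposition~\ref{prop:cdh}), to split
\begin{equation*}
\beta=\pi^*\alpha+\sum_\sigma i_{\sigma,*}\gamma_\sigma,
\end{equation*}
where $\alpha\in H^{2p}(X,\Q(p))$ is a Hodge class, the sum runs over smooth toric strata $\sigma\subset E$ with inclusions $i_\sigma\colon\sigma\hookrightarrow\tilde X$, and each $\gamma_\sigma$ is a Hodge class on $\sigma$. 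By the preceding paragraph each $\gamma_\sigma$ is algebraic, so the Gysin pushforward $i_{\sigma,*}\gamma_\sigma$ is algebraic on $\tilde X$.

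For the first summand I would apply Theorem~\ref{thm:ellipticmod}: it presents $\alpha$ as the image under the cycle map of some $\tilde\alpha\in H_M^{2p}(X,\Q(p))$. By the naturality of motivic cohomology and the cycle map (Theorem~\ref{thm:obstrvan}), together with the identification $H_M^{2p}(\tilde X,\Q(p))\cong CH^p(\tilde X)_\Q$ valid on the smooth variety $\tilde X$, the pullback $\pi^*\tilde\alpha$ is an ordinary algebraic cycle class on $\tilde X$ that maps to $\pi^*\alpha$. Summing, $\beta$ is algebraic.

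The main obstacle is producing the decomposition $\beta=\pi^*\alpha+\sum i_{\sigma,*}\gamma_\sigma$ with $\alpha$ and the $\gamma_\sigma$ of pure Hodge type, which is the content of the decomposition theorem for the proper birational map $\pi$. This simplifies considerably in our setting because every stratum of $E$ is smooth toric, so the perverse summands appearing in the BBD decomposition are essentially constant sheaves on smooth toric varieties. A hands-on alternative that avoids BBD is to peel off the toric strata of $E$ one at a time via Gysin sequences, reducing inductively either to Theorem~\ref{thm:ellipticmod} on $X$ or to the (trivial) Hodge conjecture on a smooth toric variety; both are available, so the argument closes.
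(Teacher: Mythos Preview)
Your proposal is correct and takes essentially the same approach as the paper: split $H^*(\tilde X)$ into $\pi^*H^*(X)$ plus contributions from the toric (hence cellular) exceptional locus, handle the latter directly and the former via Theorem~\ref{thm:ellipticmod} together with the identification $H_M^{2p}(\tilde X,\Q(p))\cong CH^p(\tilde X)_\Q$. You are more explicit than the paper about the machinery needed for the splitting (BBD or iterated Gysin), whereas the paper simply asserts the decomposition ``in outline.''
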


\begin{proof}
  In outline, the cohomology of $\tilde X$ is generated by the image
  of $H^*(X)$ and algebraic cycles supported on the exceptional locus
  of $\pi$. The Hodge cycles in $\pi^*H^*(X)$ lie in the image of
  $H^{2*}_M(X,\Q(*))$, which factors through $CH^*(X)_\Q$.
\end{proof}

Gordon's proof  is somewhat different.
As noted earlier, there does not seem to be anyway of going backwards
and deducing the theorem from this result. 


\end{document}